\documentclass[a4paper, intlimits, 10pt]{amsart}
\setlength{\textwidth}{\paperwidth}
\addtolength{\textwidth}{-2.2in}
\calclayout
\usepackage{enumerate}
\usepackage{color}
\usepackage{amssymb}
\usepackage{extarrows}
\usepackage{graphicx}
\usepackage{breakcites}
\usepackage{dsfont}
\definecolor{dark green}{rgb}{0.09, 0.45, 0.27}
\usepackage{hyperref}
\usepackage{tcolorbox}

\newtheorem{theorem}{Theorem}[section]
\newtheorem{proposition}[theorem]{Proposition}
\newtheorem{lemma}[theorem]{Lemma}
\newtheorem{corollary}[theorem]{Corollary}
\newtheorem{definition}[theorem]{Definition}
\newtheorem{remark}[theorem]{Remark}

\newtheorem{assumption}[theorem]{Assumption}

\newtheorem{mtheorem}[theorem]{Metatheorem}
\newcommand{\R}{\mathbb{R}}

\newcommand{\N}{\mathds{N}}

\newcommand{\E}{\mathbb{E}}

\renewcommand{\H}{\mathcal{H}}

\renewcommand{\P}{\mathds{P}}

\newcommand{\Pd}{\mathrm{Proj}^d}

\newcommand{\Pts}{\mathrm{Proj}_{\theta\#}}

\makeatletter
\newtheorem*{rep@theorem}{\rep@title}
\newcommand{\newreptheorem}[2]{%
\newenvironment{rep#1}[1]{%
 \def\rep@title{#2 \ref{##1}}%
 \begin{rep@theorem}}%
 {\end{rep@theorem}}}
\makeatother

\newreptheorem{theorem}{Theorem}
\newreptheorem{corollary}{Corollary}

\usepackage{tikz}
\usetikzlibrary{decorations.markings}
\usetikzlibrary{shapes.geometric}

\pgfdeclarelayer{edgelayer}
\pgfdeclarelayer{nodelayer}
\pgfsetlayers{edgelayer,nodelayer,main}

\tikzstyle{none}=[inner sep=0pt]
\tikzset{new/.style={thick}}

\definecolor{cccccc}{rgb}{0.8,0.8,0.8}
\definecolor{azure}{rgb}{0.0, 0.5, 1.0}
\definecolor{cqcqcq}{rgb}{0.75,0.75,0.75}
\definecolor{ao}{rgb}{0.0, 0.5, 0.0}
\definecolor{amber}{rgb}{1.0, 0.49, 0.0}
\definecolor{babyblue}{rgb}{0.54, 0.81, 0.94}
\definecolor{darkpastelgreen}{rgb}{0.01, 0.75, 0.24}
\definecolor{darkspringgreen}{rgb}{0.09, 0.45, 0.27}

\begin{document}
\title[Max-sliced Wasserstein concentration on RKHS]{Max-sliced Wasserstein concentration and uniform ratio bounds of empirical measures on RKHS}

\date{\today}

\author{Ruiyu Han}
\address{
Ruiyu Han\newline
Department of Mathematical Sciences\newline 
Carnegie Mellon University\newline 
5000 Forbes Avenue\newline
Pittsburgh, PA 15213, USA}
\email{ruiyuh@andrew.cmu.edu}

\author{Cynthia Rush}
\address{
Cynthia Rush\newline
Department of Mathematics\newline
Columbia University\newline
1255 Amsterdam Avenue\\
New York, NY 10027, USA}
\email{cgr2130@columbia.edu}

\author{Johannes Wiesel}
\address{
Johannes Wiesel \newline
Department of Mathematical Sciences\newline 
Carnegie Mellon University\newline 
5000 Forbes Avenue\newline
Pittsburgh, PA 15213, USA}
\email{wiesel@cmu.edu}

\keywords{RKHS, (max-sliced) Wasserstein distance, (projection robust) optimal transport, ratio limit theorem}

\begin{abstract}
Optimal transport and the Wasserstein distance $\mathcal{W}_p$ have recently seen a number of applications in the fields of statistics, machine learning, data science, and the physical sciences. These applications are however severely restricted by the curse of dimensionality, meaning that the number of data points needed to estimate these problems accurately increases exponentially in the dimension. To alleviate this problem, a number of variants of $\mathcal{W}_p$ have been introduced. We focus here on one of these variants, namely the max-sliced Wasserstein metric $\overline{\mathcal{W}}_p$. This metric reduces the high-dimensional minimization problem given by $\mathcal{W}_p$ to a maximum of one-dimensional measurements in an effort to overcome the curse of dimensionality. In this note we derive concentration results and upper bounds on the expectation of $\overline{\mathcal{W}}_p$ between the true and empirical measure on unbounded reproducing kernel Hilbert spaces. We show that, under quite generic assumptions, probability measures concentrate uniformly fast in one-dimensional subspaces, at (nearly) parametric rates. Our results rely on an improvement of currently known bounds for $\overline{\mathcal{W}}_p$ in the finite-dimensional case.
\end{abstract}

\maketitle

\section{Introduction}

In recent years, the theory of optimal transportation (OT) of probability measures \cite{villani2009optimal, rachev1998mass} has seen applications in the fields of statistics, machine learning, data science, and the physical sciences, see \cite{rubner2000earth, courty2017joint, chernozhukov2017monge, arjovsky2017wasserstein, alvarez2018gromov, gramfort2015fast} to name just a few. A well known specific instance of an OT problem is given by the Wasserstein metric
\begin{align}\label{eq:wass}
\mathcal{W}_p(\mu,\nu)^p := \inf_{\pi\in \Pi(\mu,\nu)} \int \|x-y\|_{\mathcal{H}}^p\,\pi(dx,dy). 
\end{align}
Here $\mu$ and $\nu$ are two probability measures on some underlying Hilbert space $(\mathcal{H},\|\cdot\|_{\mathcal{H}})$ and $\Pi(\mu,\nu)$ is the set of joint distributions of $\mu$ and $\nu$ on the product space $\mathcal{H}\times \mathcal{H}.$ From a topological point of view, two of the most attractive features of $\mathcal{W}_p$ are its compatibility with the weak topology and its isometric embedding of the underlying space $(\mathcal{H},\|\cdot\|_{\mathcal{H}})$ through Dirac measures.
Unfortunately however, its statistical applications are severely restricted by the \emph{curse of dimensionality}, meaning that typically $\mathcal{W}_p(\mu_n,\mu) \approx n^{-1/d}$ for $d>2p$, where $\mu_n$ is the empirical measure of i.i.d.\ samples from $\mu$ and $\mathcal{H}$ has dimension $d.$ In other words, its rate of concentration degrades drastically as the dimension of the problem grows. To alleviate this, a number of modifications to the Wasserstein metric have been proposed, see e.g., \cite{goldfeld2020convergence, goldfeld2020gaussian, kolouri2015radon, deshpande2018generative}. In this work, we focus on one specific modification, namely the \emph{max-sliced Wasserstein metric} $\overline{\mathcal{W}}_p$ (see \eqref{eq:MSW} below for a formal definition). The max-sliced Wasserstein metric reduces the high-dimensional minimization problem \eqref{eq:wass} to a maximum of one-dimensional measurements in an effort to overcome the curse of dimensionality. Indeed, it turns out that $\overline{\mathcal{W}}_p(\mu_n,\mu)$ concentrates at parametric rates under various conditions on the underlying measures in the input space, which is usually given by the Euclidean space $\mathcal{H}=\mathbb{R}^d$. 

In this work, we extend these results to unbounded Hilbert spaces $\mathcal{H}$: we derive dimension-free upper bounds for $\overline{\mathcal{W}}_p(\mu_n,\mu)$ when $\mu$ is a probability measure on $\mathcal{H}$.  In other words, under quite general conditions, probability measures concentrate uniformly at fast rates in one-dimensional subspaces of a Hilbert space. Even in the finite-dimensional case, our results improve on currently known bounds on the expectation and concentration of $\overline{\mathcal{W}}_p(\mu_n,\mu)$.

Concentration of probability measures on (reproducing kernel) Hilbert spaces finds relevant and timely applications in statistics and machine learning, as kernels are often a powerful tool for measuring similarity between data points when the data has complicated structure, like graphical data. Moreover, many machine learning problems are formulated in feature space,  instead of the original input space, due to the ability of the Hilbert space construction to capture nonlinear structures in the data. It is therefore essential to establish concentration results for $\overline{\mathcal{W}}_p(\mu_n,\mu)$ \emph{after} transforming the data from the original input space into an infinite-dimensional Hilbert space through a kernel function to evaluate the performance of such methods. 

In the remainder of this introduction, we give a more formal overview of our main results. For this, we recall that $\mathcal{H}$ denotes a Hilbert space with scalar product $\langle \cdot, \cdot\rangle_\mathcal{H}$ and norm $\|\cdot\|_\mathcal{H}$.
Given a probability measure $\mu$ on $\mathcal{H}$ and a unit vector $\theta\in \H$, we define the pushforward measure $\mu_\theta$ via $$\mu_\theta(A)=\mu(\{x\in \mathcal{H}: \langle x, \theta\rangle_{\mathcal{H}} \in A\}),\qquad  A\subseteq \R \text{ Borel}.$$
The main object of this paper is the \emph{max-sliced Wasserstein distance of order $p$}, with $p\in [1,\infty)$, for measures $\mu, \nu \in \mathcal{H}$, given by
\begin{align}
\label{eq:MSW}
\overline{\mathcal{W}}_p(\mu,\nu):=\sup\limits_{\|\theta\|_{\H}=1}\mathcal{W}_p (\mu_\theta, \nu_\theta).
\end{align}
Here, $\mathcal{W}_p$ denotes the Wasserstein distance of order $p$ between the one-dimensional distributions $\mu_\theta$ and $\nu_\theta$, i.e.,
\begin{align*}
\mathcal{W}_p (\mu_\theta, \nu_\theta)^p =\inf_{\pi \in \Pi(\mu_\theta, \nu_\theta)} \int |x-y|^p\,\pi(dx,dy). 
\end{align*}
In short, $\overline{\mathcal{W}}_p(\mu,\nu)$ is the 
maximum Wasserstein distance between the pushforward measures $\mu_\theta$ and $\nu_\theta$ of the one-dimensional projections $x\mapsto \langle x, \theta\rangle$. The max-sliced Wasserstein distance goes back to the early works \cite{deshpande2019max,paty2019subspace} and it is well known that it metrizes weak convergence. In fact, if $\mathcal{H}$ is finite dimensional, then the topology induced by $\overline{\mathcal{W}}_p$ is the $p$-Wasserstein topology \cite{bayraktar2021strong}. In infinite dimensional spaces, $\overline{\mathcal{W}}_p$ still metrizes the weak topology; we give an overview of these results in Section \ref{sec:topology}.

In this note we are primarily interested in the statistical properties of $\overline{\mathcal{W}}_p$: writing $X_1, \dots, X_n$ for an i.i.d.\ sample from $\mu$ and defining the empirical measure $\mu_n:= \frac{1}{n} \sum_{j=1}^n \delta_{X_j},$ we aim to answer the following question:

\begin{tcolorbox}
What are (tight) upper bounds for  $\overline{\mathcal{W}}_p(\mu_n,\mu)$? Or more generally: how well does $\mu_n$ concentrate uniformly over the one-dimensional subspaces of $\mathcal{H}$? 
\end{tcolorbox}

Our interest in this question derives from the following observation: while the $p$-Wasserstein distance $\mathcal{W}_p$ on a finite-dimensional space $\mathcal{H}$ typically exhibits rates of order $\E[\mathcal{W}_p(\mu_n,\mu)^p]\approx n^{-p/d}$ for $d>2p$ (i.e.\ it suffers from the curse of dimensionality), this is \emph{not} true for its max-sliced counterpart. Indeed, if $\mu$ is subgaussian, then it has been observed by \cite{niles2022estimation} that 
$\E[\overline{\mathcal{W}}_p(\mu_n,\mu)]\lesssim \sqrt{d} n^{-1/(2p)}$ for general $p\ge 1$. This analysis was later extended by \cite{lin2021projection} to the case where $\mu$ satisfies a Bernstein tail condition or Poincar\'e inequality. For the special case that $\mu$ is isotropic, \cite{bartl2022structure} give sharp concentration bounds on the rate of convergence for $p=2$. Very recently, this analysis has been extended to non-isotropic measures and general Banach spaces in the case $p=1$ \cite{march}. To the best of our knowledge, this is the first paper establishing rates for $\overline{\mathcal{W}}_p(\mu_n,\mu)$ in infinite dimensional unbounded spaces and general $p\in [1,\infty).$

Notably, all of the works mentioned above establish that $\overline{\mathcal{W}}_p(\mu_n,\mu)$ does not suffer from the curse of dimensionality. However, even in the finite dimensional case, it remains unclear if  the assumptions of \cite{niles2022estimation, lin2021projection} can be weakened for general $p\ge 1$. Our first result shows that this is indeed the case: if $\mathcal{H}$ is finite dimensional, then 
\begin{align}\label{eq:intro_rate}
\E[\overline{\mathcal{W}}_p(\mu_n,\mu)^p]\lesssim \sqrt{\frac{d}{n}},
\end{align}
as soon as $M_s(\mu):=\int \|x\|^{s}_\mathcal{H}\,\mu(dx) <\infty$ for some $s>2p,$ see Theorem \ref{thm:exp_fin} in Section \ref{sec:main}. Here $\lesssim$ hides logarithmic factors of $n$ and a constant depending on $p,s$ and $M_s(\mu)$ only. In consequence, this result covers arbitrary heavy-tailed measures $\mu$ with finite $s$-moment; no subgaussian tails, Bernstein tail conditions or Poincar\'e inequalities need to be checked. In additional to the result in \eqref{eq:intro_rate}, we also obtain concentration inequalities for $\overline{\mathcal{W}}_p(\mu_n,\mu)$, see Theorem \ref{thm:conc_fin}. For ease of exposition we however focus on rates for the expectation in this introduction, and refer to a more complete discussion of our results in Section \ref{sec:main}. Interestingly, our method of proof for these results significantly deviates from the works \cite{niles2022estimation,lin2021projection} mentioned above. Instead of deriving rates through $\epsilon$-net arguments and sufficiently light tails of $\mu$, we instead use bounds on the uniform ratio
\begin{align}\label{eq:uniform_ratio_intro}
\sup_{(\mathbf{\theta},t)\in \H \times {\mathbb R}}\frac{|F_{\mathbf{\theta}}(t)-F_{\mathbf{\theta},n}(t)|}{\sqrt{F_\theta(t)\vee F_{\theta,n}(t)}},
\end{align}
where $F_\theta(\cdot)$ is the cdf of $\mu_{\theta}$ and $F_{\theta,n}(\cdot)$ is the cdf of the projection $x\mapsto \langle x, \theta \rangle_{\mathcal{H}}$ under $\mu_n$. These bounds are of independent interest and are well-studied in the literature; an incomplete list is \cite{wellner1978limit, alexander1985rates, alexander1987central, pollard1981limit, BARTLETT199955, gine2003ratio, bartl2023variance} and the references therein. To the best of our knowledge, connections of these bounds and the statistical behaviour of the max-sliced Wasserstein distances $\overline{\mathcal{W}}_p(\mu_n,\mu)$ have been made fairly recently; we refer to \cite{bartl2022structure, olea2022generalization} for the max-sliced Wasserstein distance and \cite{manole2022minimax} for the sliced Wasserstein distance. In this note, we extend these to the case of general $p\ge 1$.

Once bounds on the rates $\E[\overline{\mathcal{W}}_p(\mu_n,\mu)^p]$ are established for a finite-dimensional $\mathcal{H}$, it is natural to ask how much of this structure extends to infinite dimensional spaces. In this note we focus our analysis on \emph{reproducing kernel Hilbert spaces} (RKHS) that have a representation as weighted $L^2$-spaces
\begin{align}\label{eq:rkhs_intro}
 \mathcal H=\left\{f\in L^2(\Omega, m): \sum\limits_{j=1}^{\infty}\frac{\langle f,\psi_j\rangle^2_{L^2}}{\lambda_j}<\infty\right\}
\end{align}
for some measure space $(\Omega, m)$, a non-increasing sequence $(\lambda_j)_{j\in \N}$ converging to zero for $j\to \infty$, and a sequence of functions $(\psi_j)_{j\in \N}$ in $L^2(\Omega, m)$ with scalar product $\langle \cdot, \cdot\rangle_{L_2}$. Going back at least to \cite{poggio1989theory, girosi1998equivalence, scholkopf1997comparing}, RKHS are by now indispensable tools in statistics and machine learning, in particular to kernel-based regression or the kernel support vector machine. We refer to \cite{williams1998prediction, van2004benchmarking, wahba1990spline, cai2006prediction, shawe2004kernel, hofmann2008kernel} for an overview of current developments in this area. Studying optimal transport in RKHS is not new; see e.g.\ \cite{gelbrich1990formula,cuesta1996lower,ZZWN, Oh_2020,nath2020statistical, wang2021two} and the references therein. To the best of our knowledge, however, there are no results for the max-sliced Wasserstein distance in infinite dimensional spaces available in the literature.

The representation \eqref{eq:rkhs_intro} is known as Mercer's theorem \cite{schmidt1908theorie, mercer1909xvi} and holds for a large class of RKHS. We refer to Section \ref{sec:rkhs} for a detailed discussion. We also remark that all our main results stated below can easily be transferred to a (non-weighted) $L^2$-space. We detail this alternative in Section \ref{sec:lit_review}. There we also give a detailed comparison to the work of \cite{lei2020convergence}, which gives rates for the (standard) Wasserstein distance $\mathcal{W}_p(\mu_n,\mu)$ for probability measures in $L^2$-spaces.

In this note, we consider two main scenarios, which are very classical in the study of RKHS, see e.g.\ \cite{yang2020function} or the comparison to \cite{lei2020convergence} in Section \ref{sec:lit_review}: if the eigenvalues $\{\lambda_j\}_{j\in \N}$ decay exponentially, i.e.\ $\lambda_j \le C\exp(-cj^\gamma)$ for some constants $\gamma, c,C>0$ and the weighted moment 
\begin{align}\label{eq:moment_intro}
\widetilde{M}_s(\mu):=\int \Big(\sum_{j=1}^{\infty}\frac{1}{\lambda_j^2}\langle x, \psi_j\rangle^2_{L^2} \Big)^{\frac{s}{2}}\,\mu(dx),
\end{align}
is finite for some $s>2p$, then
\begin{align*}
\E[\overline{\mathcal{W}}_p(\mu_n, \mu)^p]\lesssim  \sqrt{\frac{1}{n}}.
\end{align*}
As before, $\lesssim$ hides logarithmic factors of $n$ and a constant depending on $p,s,\gamma, \widetilde M_s(\mu)$ only. 
If $\{\lambda_j\}_{j\in \N}$ decay only polynomially, i.e.\ $\lambda_j\leq C j^{-\gamma}$ for all $j\geq 1$ for some constants $C>0$ and $\gamma>1$, then 
\begin{align*}
\E[\overline{\mathcal{W}}_p(\mu_n, \mu)^p]\lesssim  \sqrt{\frac{1}{n^{1-\frac{1}{p\gamma}}}}.
\end{align*}
We also detail the corresponding uniform ratio bounds for both of these scenarios under slightly stronger assumptions on $\mu$, see  Theorem \ref{thm:unif_exp} and \ref{thm:unif_poly} in Section \ref{sec:main} for the exact statements.

\subsection{Related Work}

\textbf{(Max-)sliced Wasserstein distances.} Sliced Wasserstein distances \cite{rabin2011wasserstein, bonneel2015sliced, han2023sliced} and max-sliced Wasserstein distances, meaning distances between probability distributions that rely on computing the average/maximum of the standard Wasserstein distance between one-dimensional projections, have been studied recently in the literature, e.g.\ \cite{kolouri2019generalized, kengo12022statistical, niles2022estimation, lin2021projection, bartl2022structure, deshpande2019max, paty2019subspace}. 
In particular, \cite{olea2022generalization} use the max-sliced Wasserstein distance to analyze the out-of-sample prediction error of the square-root LASSO and related estimators.

\textbf{Kernel Methods.} Kernel methods are by now popular in the statistics and machine learning literature as a means to, for example, analyze complicated data types in feature space \cite{lodhi2002text, Kondor2002DiffusionKO, ben2005kernel, cortes2004rational, cortes2003lattice} or to lift data into higher dimensions for easier analysis. In general, because the data is first mapped into the RKHS through an implicit feature map, we do not have access to the pushforward measures on the RKHS and, generally, analysis in RKHS cannot be performed using similar methods as can be used to study the original finite-dimensional spaces. 
We emphasize that our results are still meaningful for analysis of methods even if the feature map is implicit and the practitioner has access only to data-dependent kernel functions that characterize the similarity between observations. Indeed, consider kernel regression where one has access to i.i.d.\ observations $\{\tilde y_i, \tilde x_i\}_{i=1}^n$ and wishes to find an estimator $\widehat{\beta}$ solving $\arg\min_{\beta} \sum_{i=1}^n(\tilde y_i - K(\tilde x_i, \beta))^2$ using some kernel $K$. If we consider prediction at a new observation $\tilde x_{new}$ with the estimator $K( \widehat{\beta}, \tilde x_{new})$, our results can show concentration to the truth $K(\beta_0, \tilde x_{new})$ (under the assumption the data was generated from such a model using $\beta_0$). To see this, notice that $K( \widehat{\beta}, \tilde x_{new}) = \tilde y \widehat{K}^{\dagger} \widehat{K}(\tilde X^n, \tilde x_{new})$ where we define $\widehat{K}_{i,j} := K(\tilde x_i, \tilde x_j)$ and $\widehat{K}(\tilde X^n, \tilde x_{new})_{i} := K(\tilde x_i, \tilde x_{new})$ and we can study concentration of these objects to their population values by noticing that, for example, with $\tilde X$ representing the true distribution and $\tilde X^n$ the empirical distribution of the data $\{\tilde x_i\}_{i=1}^n$,
\[\lvert K(\tilde X^n, \tilde x_{new}) - K(\tilde X, \tilde x_{new}) \lvert \]
can be bounded using $\overline{\mathcal{W}}_2.$ We refer to Section \ref{sec:numerical} for further illustration of our results for RKHS with Gaussian kernel.

Regarding previous work studying optimal transport in RKHS, \cite{cuesta1996lower} and \cite{gelbrich1990formula} provide expressions for the (standard) Wasserstein distance between Gaussian measures on Hilbert spaces. \cite{ZZWN} constructs optimal transports in an RKHS under Gaussianity assumptions, \cite{han2023sliced} derives the notion of the sliced Wasserstein distance for measures in separable Hilbert spaces, and \cite{wang2021two} considers a two-sample test on finite dimensional RKHS.

\textbf{Concentration for Wasserstein Metrics.} Departing from the works \cite{dereich2013constructive, fournier2015rate}, which establish finite sample bounds on the Wasserstein metric, a recent focus in the literature has been to establish finite sample bounds on  variants of the Wasserstein metric: see, for example, \cite{boissard2014mean, singh2018minimax, weed2019sharp, niles2022estimation, lei2020convergence, chizat2020faster} and the references therein.
As mentioned previously, the Wasserstein metric suffers from the curse of dimensionality, while faster rates of convergence for the max-sliced Wasserstein metric in Euclidean space were first observed in \cite{niles2022estimation} for subgaussian probability measures and in \cite{lin2021projection} under a projective Poincar\'e/Bernstein inequality. More recently, in Euclidean space, \cite{bartl2022structure} have obtained sharp rates for $r=2$ and isotropic distributions. We also refer to  \cite{kengo12022statistical}, which derives sharp rates for the max-sliced Wasserstein metric in Euclidean space for log-concave measures, that explicitly state the dependence on the dimension of the data.

\subsection{Notation}
Throughout this note we take $p,q\in [1,\infty)$. We write $M_q(\nu)=\int_\mathcal{H} \|x\|^q_{\mathcal{H}}\,\nu(dx)$ for the $q$th moment of a measure $\nu\in \mathcal{P}(\mathcal{H}),$ where $\mathcal{P}(\mathcal{H})$ denotes the space of probability measures on $\mathcal{H}$.  For any $\theta\in \mathcal{H}$, we write $F_\theta(t)=\mu(\{x\in \mathcal{H}: \langle x, \theta\rangle \le t\})$ and correspondingly $F_{\theta,n}(t)=\mu_n(\{x\in \mathcal{H}: \langle x, \theta\rangle \le t\})$. We use the standard convention $\nu(A):=\nu(\{x\in \mathcal{H}: x\in A\})$ for any Borel set $A\subseteq \mathcal{H}$ and often omit $\mathcal{H}$ in the integral bounds if the context is clear, e.g.\ $M_q(\nu)=\int \|x\|^q_{\mathcal{H}}\,\nu(dx)$.

We denote generic constants by 
$c,C>0$. These might change from line to line, with the convention that $C$ increases from line to line and $c$ decreases from line to line.

\subsection{Organization of the paper}
In Section \ref{sec:rkhs} we recall basic facts about RKHS, before stating the main results of this paper in Section \ref{sec:main}. We then compare to the setting and results of \cite{lei2020convergence} in Section \ref{sec:lit_review}, and give numerical examples in Section \ref{sec:numerical} for finite and infinite dimensional RKHS. All proofs are collected in Section \ref{sec:proofs} and Section \ref{sec:topology} includes some notes of the topology of generated by $ \overline{\mathcal{W}}_p$.

\section{Reproducing Kernel Hilbert spaces}
\label{sec:rkhs}

Let us now specify the assumptions we make on the RKHS $\mathcal{H}.$ If $\mathcal{H}$ is  $d$-dimensional, we simply identify $(\mathcal{H}, \|\cdot\|_{\mathcal{H}})$ with the Euclidean space $(\R^d, |\cdot|)$ for notational convenience and do not make any further assumptions. In the infinite dimensional case, we consider the following classical setting, see e.g.\ \cite[Chapter 11]{Paulsen_Raghupathi_2016}.

Let us recall that a RKHS $\mathcal{H}$ is a subset of functions $f:\Omega \to \R$, where $\Omega$ is an arbitrary set, satisfying the following properties:
\begin{itemize}
    \item $\mathcal{H}$ is a Hilbert space equipped with a scalar product $\langle \cdot, \cdot \rangle_{\mathcal{H}}$ and a norm $\|\cdot\|_{\mathcal{H}}$ and the operations addition $(f+g)(x):= f(x)+g(x)$ and scalar multiplication $(\lambda f)(x):=\lambda f(x)$.
    \item For any $z\in \Omega$, the evaluation functional $E_z$ defined via $E_z(f)=f(z)$ for all $f\in \mathcal{H}$ is bounded, i.e.\ there exists $M_z<\infty$ such that  
    $E_z(f)\le M_z \|f\|_{\mathcal{H}}$ holds for all $f\in \mathcal{H}$. By the Riesz representation theorem, this implies existence of a kernel $K:\Omega \times \Omega\to \R$ satisfying 
    \begin{align*}
    f(z)=E_z(f)=\langle f, K(z,\cdot)\rangle_\mathcal{H}, \qquad\forall f\in \mathcal{H}.
    \end{align*}
\end{itemize}

Throughout this note we assume that the kernel $K$ is continuous, symmetric, and  positive semi-definite and there exists a measure space $(\Omega, m)$ such that  $K\in L^2(\Omega\times\Omega,m\otimes m)$. Here $m\otimes m$ denotes the product measure.
Let $T_K: L^2(\Omega,m)\to L^2(\Omega,m)$ be the integral operator induced by $K$, in other words
\[
T_K f(z):=\int_{\Omega}K(z,z')f(z')\,m(dz'),\qquad\forall f\in L^2(\Omega,m).
\]
Then $T_K$ is Hilbert-Schmidt by \cite[Theorem \MakeUppercase{\romannumeral 6}.23]{reed_simon} and $T_K$ is compact by  \cite[Theorem \MakeUppercase{\romannumeral 6}.22 (e)]{reed_simon}. Because $K$ is symmetric, $T_K$ is self-adjoint, meaning that, for $f,g\in L^2(\Omega, m)$, 
\[
\langle f, T_Kg\rangle_{L^2} =\langle T_Kf,g\rangle_{L^2},
\]
where we recall that $\langle \cdot , \cdot \rangle_{L^2}$ denotes the inner product on $L^2(\Omega,m)$.
The operator $T_K$ has countable eigenvalues $\{\lambda_j\}_{j\in\N}$ that are non-increasing and satisfy $\lim_{j\to \infty}\lambda_j= 0$  by the Hilbert-Schmidt Theorem (\cite[Theorem \MakeUppercase{\romannumeral 6}.16]{reed_simon}). The corresponding eigenfunctions $\{\psi_j\}_{j\in\N}$ form a complete orthonormal basis of $L^2(\Omega, m)$. Using \cite[Theorem 11.18]{Paulsen_Raghupathi_2016}, the RKHS $\H$ can be written as a subset of $L^2(\Omega, m)$, i.e.\
\begin{equation}\label{eq: RKHS}
  \mathcal H=\left\{f\in L^2(\Omega, m): \sum\limits_{j=1}^{\infty}\frac{\langle f,\psi_j\rangle^2_{L^2}}{\lambda_j}<\infty\right\};  
\end{equation}
see \eqref{eq:rkhs_intro}.
Furthermore, by \cite[Theorem 11.18]{Paulsen_Raghupathi_2016}
we have
\begin{align*}
T_K f = \sum_{j=1}^\infty \sqrt{\lambda_j} \langle f, \psi_j\rangle_{L^2} \psi_j,
\end{align*}
and
\begin{align*}
K(z,z') = \sum_{j=1}^\infty \sqrt{\lambda_j} \psi_j(z) \psi_j(z').
\end{align*}
Lastly, the inner product $\langle \cdot,\cdot \rangle_{\H}$ can be written as
\begin{equation}
\label{eq:inner_prod}
\langle f,g\rangle_{\mathcal H}=\sum\limits_{j=1}^{\infty}\frac{1}{\lambda_j} \langle f,\psi_j \rangle_{L^2} \langle g,\psi_j \rangle_{L^2},
\end{equation}
the scaled eigenfunctions $\{\sqrt{\lambda_j}\psi_j\}_{j\in\N}$ form an orthogonal basis of the RKHS $\H$ and a feature mapping $\phi(z)\in\H$ is given by $\phi(z)=\sum\limits_{j=1}^{\infty}\sqrt{\lambda_j}\psi_j(z) \psi_j$ for any $z\in\Omega$. 

\begin{remark} In the case where $\Omega$ is compact subset of $\R^d$, $m$ is the Lebesgue measure on $\R^d$ and $\sup_{z\in\Omega}K(z,z)<\infty$, the above decomposition of $T_K$ follows from Mercer's theorem \cite{Paulsen_Raghupathi_2016}.
\end{remark}

\section{Main results}
\label{sec:main}

We work under the following standing assumption: 

\begin{assumption}\label{ass:s_moment}
There exists $s>2p$ such that $M_{s}(\mu)<\infty$.
\end{assumption}

It is well-known from the one-dimensional setting, that this assumption is (nearly) necessary to obtain rates of order $1/\sqrt{n},$ see e.g. \cite[Theorem 1]{fournier2015rate} and \cite[Corollary 7.17]{bobkov2019one}.

\subsection{The finite dimensional case}

\begin{theorem}[Concentration of $\overline{\mathcal{W}}_p$]\label{thm:conc_fin}
Under Assumption \ref{ass:s_moment} we have
\begin{align*}
&\P\left( \overline{\mathcal{W}}_p(\mu_n, \mu)^p \ge c\log(2n+1)^{\frac{p}{s}} \left[\sqrt{\frac{d}{n}} +\left(1+\sqrt{M_{s}(\mu) \vee M_{s}(\mu_n)}\right)\epsilon \right]\right)\\
&\le e^{-\frac{n\epsilon^2}{2}}+ 8e^{\log(2n+1)[2(d+1)-\frac{n\epsilon^2}{64}]},
\end{align*}
for all $\epsilon>0$. The constant $c$ depends only on $p$ and $s$.
\end{theorem}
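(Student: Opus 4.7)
My approach converts $\overline{\mathcal W}_p(\mu_n,\mu)^p$ into a quantity controlled uniformly in $\theta$ by the cdf ratio
\[
R_n:=\sup_{\theta,t}\frac{|F_\theta(t)-F_{\theta,n}(t)|}{\sqrt{F_\theta(t)\vee F_{\theta,n}(t)}},
\]
which is then concentrated via classical VC theory. The key observation is that the class of half-spaces in $\R^d$ has VC dimension $d+1$, so the $\sup_\theta$ is absorbed at the level of a single uniform deviation bound and dimension enters only as $d$, rather than via an $\epsilon$-net on the sphere as in \cite{niles2022estimation, lin2021projection}.

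For the uniform ratio step, I would invoke a classical Alexander/Gin\'e--Koltchinskii uniform ratio inequality for VC classes to obtain a bound of the form $\P(R_n>\epsilon)\le 8(2n+1)^{2(d+1)}e^{-n\epsilon^2/64}$, which accounts precisely for the second probability term in the theorem. Next, fixing $\theta$, I would use the quantile representation $\mathcal W_p(\mu_\theta,\mu_{n,\theta})^p=\int_0^1|F_\theta^{-1}-F_{\theta,n}^{-1}|^p\,du$ and invert the ratio bound to obtain, for each $u\in(0,1)$,
\[
|F_\theta^{-1}(u)-F_{\theta,n}^{-1}(u)|\le F_\theta^{-1}\!\big(u+R_n\sqrt{u(1-u)}\big)-F_\theta^{-1}\!\big(u-R_n\sqrt{u(1-u)}\big),
\]
together with the symmetric bound swapping $F_\theta$ and $F_{\theta,n}$. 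Combining this with the Markov-type envelope $|F_\theta^{-1}(u)|\lesssim(M_s(\mu)/(u\wedge(1-u)))^{1/s}$, truncation of the quantile at a level $T$, and a Cauchy--Schwarz step on the product $R_n\sqrt{u(1-u)}\cdot\text{envelope}$ produces the factor $\sqrt{M_s(\mu)\vee M_s(\mu_n)}$ and yields, deterministically on the good event of the ratio bound,
\[
\overline{\mathcal W}_p(\mu_n,\mu)^p\le C\log(2n+1)^{p/s}\Big[\sqrt{d/n}+\big(1+\sqrt{M_s(\mu)\vee M_s(\mu_n)}\big)\epsilon\Big].
\]

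The truncation level $T\asymp(\log n)^{1/s}$ is chosen so that the Markov tail of $\mu$ beyond $|x|=T$ matches the main term, which is exactly where the prefactor $\log(2n+1)^{p/s}$ enters. The first probability term $e^{-n\epsilon^2/2}$ then originates from a Bernstein/Hoeffding bound for the i.i.d.\ sum $\sum_{j=1}^n\|X_j\|^s_{\mathcal H}$ (after an auxiliary truncation), which controls $M_s(\mu_n)$ relative to $M_s(\mu)$. A union bound over the two good events gives the inequality in the stated form.

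The main technical difficulty lies in the quantile-from-ratio step. Without a density assumption on $\mu_\theta$ there is no direct pointwise inversion relating cdf deviations to quantile deviations, and the naive reduction $\mathcal W_p^p\le(2T)^{p-1}\mathcal W_1$ would introduce an extra factor $T^{p-1}$ that destroys the $\sqrt{d/n}$ rate whenever $s$ is close to $2p$. The fix is to use the full strength of the ratio inequality (rather than the weaker Kolmogorov--Smirnov supremum $\|F_\theta-F_{\theta,n}\|_\infty$) combined with the Markov envelope on the tails of $F_\theta^{-1}$, preserving the sharp rate at the cost of only the $(\log n)^{p/s}$ logarithmic factor.
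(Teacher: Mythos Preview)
Your high-level strategy---reduce to uniform cdf deviations over half-spaces and exploit that this class has VC dimension $d+1$---is exactly the paper's, and your identification of the second probability term $8e^{\log(2n+1)[2(d+1)-n\epsilon^2/64]}$ with the VC ratio inequality is correct. But there is a genuine gap in how you account for the \emph{first} term $e^{-n\epsilon^2/2}$, and this also reveals a structural difference from the paper's argument.

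You attribute $e^{-n\epsilon^2/2}$ to a Bernstein/Hoeffding bound on $\sum_j\|X_j\|_{\mathcal H}^s$ controlling $M_s(\mu_n)$. This cannot be right: the theorem statement keeps $M_s(\mu)\vee M_s(\mu_n)$ \emph{random} inside the event, so no concentration of moments is needed (that is deferred to Corollary~\ref{cor:conc_fin}, where only Markov's inequality is used precisely because Assumption~\ref{ass:s_moment} gives no higher moments). In the paper the term $e^{-n\epsilon^2/2}$ arises instead from the uniform Kolmogorov--Smirnov statistic
\[
I_1=\sup_{\theta,t}|F_\theta(t)-F_{\theta,n}(t)|,
\]
whose mean is of order $\sqrt{(d+1)/n}$ by VC theory and which then concentrates via a bounded-differences/McDiarmid argument. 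The paper's deterministic step (quoted from \cite{olea2022generalization}) produces a \emph{two-piece} bound
\[
\overline{\mathcal W}_p(\mu_n,\mu)^p\lesssim \log(2n+1)^{p/s}\Big(I_1+\sqrt{M_s(\mu)\vee M_s(\mu_n)}\,\log(2n+1)^{-1/2}I_2\Big),
\]
with $I_2$ the ratio-type quantity; the $\sqrt{d/n}$ and $e^{-n\epsilon^2/2}$ both come from $I_1$, while the $(1+\sqrt{M_s})\epsilon$ and the VC exponent come from $I_2$. Your single-quantity approach via $R_n$ conflates these: on the event $\{R_n\le\epsilon\}$ you would obtain $\overline{\mathcal W}_p^p\lesssim(\ldots)\epsilon$, but nothing in your outline explains where the additive $\sqrt{d/n}$ centering appears, nor why one of the two tail terms should be the clean subgaussian $e^{-n\epsilon^2/2}$ rather than a second copy of the VC bound. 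To repair this you would need to separate out the plain KS supremum and concentrate it around its VC mean, which is precisely the $I_1/I_2$ split the paper uses.
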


Note that the only assumption we impose here is Assumption \ref{ass:s_moment}. In particular, unlike the bounds currently existing in the literature, no subgaussian tails, Bernstein tail conditions or Poincar\'e inequalities need to be checked. Let us also remark that the estimate contains the empirical moments $M_s(\mu_n)$. To obtain a deterministic bound, one can apply a union bound to isolate the event $\{|M_s(\mu)-M_s(\mu_n)|\ge \tilde \epsilon\},$ where $\tilde\epsilon>0.$
This gives the following corollary:

\begin{corollary}\label{cor:conc_fin}
Under Assumption \ref{ass:s_moment} we have 
\begin{align*}
&\P\left( \overline{\mathcal{W}}_p(\mu_n, \mu)^p \ge c\log(2n+1)^{\frac{p}{ s}} \left[\sqrt{\frac{d}{n}} +\left(1+\sqrt{M_s(\mu)+\tilde\epsilon} \right) \epsilon \right]\right)\\
&\le  9e^{2(d+1)\log(2n+1)-\frac{n\epsilon^2}{64}}+ \frac{2M_s(\mu)}{\tilde \epsilon},
\end{align*}
for all $\epsilon, \tilde\epsilon >0$. The constant $c$ depends only on $p$ and $s$.
\end{corollary}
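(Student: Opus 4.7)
The plan is to obtain Corollary \ref{cor:conc_fin} from Theorem \ref{thm:conc_fin} by a simple union bound that removes the stochastic quantity $M_s(\mu_n)$ from the right-hand side of the theorem, replacing it with $M_s(\mu)+\tilde\epsilon$ at the cost of a controllable probability.

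First I would introduce the event $A_{\tilde\epsilon}:=\{|M_s(\mu_n)-M_s(\mu)|<\tilde\epsilon\}$. On $A_{\tilde\epsilon}$ we have $M_s(\mu_n)<M_s(\mu)+\tilde\epsilon$ and hence $M_s(\mu)\vee M_s(\mu_n)<M_s(\mu)+\tilde\epsilon$, so on $A_{\tilde\epsilon}$ the threshold appearing inside the probability in Theorem \ref{thm:conc_fin} is dominated by the deterministic expression $c\log(2n+1)^{p/s}\bigl[\sqrt{d/n}+(1+\sqrt{M_s(\mu)+\tilde\epsilon})\epsilon\bigr]$. Writing the event from the statement of the corollary as $B$, the law of total probability yields
\begin{align*}
\P(B)\le \P(B\cap A_{\tilde\epsilon})+\P(A_{\tilde\epsilon}^c)\le \P\left( \overline{\mathcal{W}}_p(\mu_n,\mu)^p\ge c\log(2n+1)^{p/s}\left[\sqrt{d/n}+(1+\sqrt{M_s(\mu)\vee M_s(\mu_n)})\epsilon\right]\right)+\P(A_{\tilde\epsilon}^c),
\end{align*}
and the first term is controlled by Theorem \ref{thm:conc_fin}.

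Next I would bound $\P(A_{\tilde\epsilon}^c)$ by Markov's inequality. Since $\E[M_s(\mu_n)]=M_s(\mu)$, the crude triangle bound gives $\E[|M_s(\mu_n)-M_s(\mu)|]\le \E[M_s(\mu_n)]+M_s(\mu)=2M_s(\mu)$, so Markov yields $\P(A_{\tilde\epsilon}^c)\le 2M_s(\mu)/\tilde\epsilon$, which is precisely the second summand in the corollary.

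Finally, I would consolidate the two exponential terms $e^{-n\epsilon^2/2}+8e^{\log(2n+1)[2(d+1)-n\epsilon^2/64]}$ appearing in Theorem \ref{thm:conc_fin} into the single expression $9e^{2(d+1)\log(2n+1)-n\epsilon^2/64}$. This is immediate from the trivial inequality $-n\epsilon^2/2\le 2(d+1)\log(2n+1)-n\epsilon^2/64$ (valid since the left side is non-positive and the right side non-negative for any $\epsilon>0$, $n\ge 1$, $d\ge 1$), which gives $e^{-n\epsilon^2/2}\le e^{2(d+1)\log(2n+1)-n\epsilon^2/64}$. Adding this to the other term produces the factor $9$. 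No step here is delicate; the corollary is essentially a bookkeeping consequence of Theorem \ref{thm:conc_fin}, and the only modeling choice is the use of the mean-bound $\E[|M_s(\mu_n)-M_s(\mu)|]\le 2M_s(\mu)$, which is necessary because under Assumption \ref{ass:s_moment} no higher moments are available to apply a variance-type (Chebyshev) argument.
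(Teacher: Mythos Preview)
Your proposal is correct and follows exactly the paper's route: union bound on the event $\{|M_s(\mu_n)-M_s(\mu)|\ge \tilde\epsilon\}$, Markov's inequality giving $2M_s(\mu)/\tilde\epsilon$, and absorbing the two exponentials into the single term with prefactor $9$. One small slip: your justification ``the right side is non-negative'' for $-n\epsilon^2/2\le 2(d+1)\log(2n+1)-n\epsilon^2/64$ is false when $\epsilon$ is large, but the inequality itself is fine (rearranged, it reads $-31n\epsilon^2/64\le 2(d+1)\log(2n+1)$), and you also need the easy observation $\log(2n+1)\ge 1$ to pass from $8e^{\log(2n+1)[2(d+1)-n\epsilon^2/64]}$ to $8e^{2(d+1)\log(2n+1)-n\epsilon^2/64}$ before adding.
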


Indeed, as soon as Assumption \ref{ass:s_moment} is satisfied, an application of Markov's inequality gives $$\P\left(\left|M_s(\mu)-M_s(\mu_n)\right|\ge \tilde\epsilon\right)\le 2M_s(\mu)/\tilde \epsilon.$$ If higher moments of $\mu$ are available, this bound can be improved, as detailed e.g., in \cite{larsson2023concentration}. For the convenience of the reader we restate a version adapted to our notation below:

\begin{lemma}[{\cite[Proposition 3.1]{larsson2023concentration}}]
We have the following:
\begin{enumerate}
\item Suppose $\int e^{a\left|x\right|^{s\beta}}\,\mu(dx)<\infty$ for some $a>0$ and $\beta \geq 1$. Then for all $n \in \mathbb{N}$ and $\tilde{\epsilon}>0$,
$$
\mathbb{P}\left(\left|M_s(\mu_n)-M_s(\mu) \right|>\tilde\epsilon\right) \leq C\big(e^{-c n \tilde{\epsilon}^2} \mathbf{1}_{\{\tilde{\epsilon} \leq 1\}}+e^{-c n \tilde{\epsilon}^\beta} \mathbf{1}_{\{\tilde{\epsilon}>1\}}\big),
$$
for some positive constants $c$ and $C$ that depend only on a, $\beta, \int |x|^s\,\mu(dx)$ and $\int e^{a\left|x\right|^{s\beta}}\,\mu(dx)$.

\item  Suppose $\int e^{a\left|x\right|^{s\beta}}\,\mu(dx)<\infty$ for some $a>0$ and $\beta \in(0,1)$. Then for all $n \in \mathbb{N}$ and $\tilde{\epsilon}>0$,
$$
\mathbb{P}\left(\left|M_s(\mu_n)-M_s(\mu)\right|>\tilde{\epsilon}\right) \leq C\left(e^{-c n \tilde{\epsilon}^2}+e^{-c(n \tilde{\epsilon})^\beta}\right),
$$
for some positive constants $c$ and $C$ that depend only on a, $\beta, \int |x|^s\mu(dx)$ and $\int e^{a\left|x\right|^{s\beta}}\mu(dx)$.

\item Suppose $\int \left|x\right|^{st}\,\mu(dx) <\infty$ for $t>2$. Then for all $n \in \mathbb{N}$ and $\tilde{\epsilon}>0$,
$$
\mathbb{P}\left(\left|M_s(\mu_n)-M_s(\mu)\right|>\tilde{\epsilon}\right) \leq e^{-c n \tilde{\epsilon}^2}+C n(n \tilde{\epsilon})^{-t},
$$
for some positive constants $c$ and $C$ that depend only on $t$ and $\int \left|x\right|^{st}\mu(dx)$.

\item  Suppose $\int \left|x\right|^{st}\,\mu(dx)<\infty$ for $t \in[1,2]$. Then for all $n \in \mathbb{N}$ and $\tilde{\epsilon}>0$,
$$
\mathbb{P}\left(\left|M_s(\mu_n)-M_s(\mu)\right|>\tilde{\epsilon}\right) \leq C n(n \tilde{\epsilon})^{-t},
$$
for some positive constant $C$ that depends only on $t$ and $\int \left|x\right|^{st}\mu(dx)$.
\end{enumerate}
\end{lemma}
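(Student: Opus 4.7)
The plan is to reduce the statement to a one-dimensional concentration inequality for a centered sum of i.i.d.\ real-valued random variables, and then invoke a Fuk--Nagaev type bound whose form depends on the integrability hypothesis. Set $Y_i := \|X_i\|_\H^s$ for $i=1,\dots,n$, so that $Y_1,\ldots,Y_n$ are i.i.d.\ with mean $M_s(\mu)$ and
\begin{align*}
M_s(\mu_n) - M_s(\mu) = \frac{1}{n}\sum_{i=1}^n \big(Y_i - \E[Y_i]\big).
\end{align*}
The hypothesis $\int e^{a|x|^{s\beta}}\,\mu(dx)<\infty$ translates to $\E[e^{aY_1^\beta}]<\infty$, and the hypothesis $\int|x|^{st}\,\mu(dx)<\infty$ translates to $\E[Y_1^t]<\infty$, so each of the four parts becomes a concentration statement for a centered i.i.d.\ sum under a prescribed tail or moment condition.

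For parts (1) and (2) the Orlicz bound $\E[e^{aY_1^\beta}]<\infty$ yields a tail estimate $\mathbb{P}(Y_1>u)\lesssim e^{-au^\beta}$. In part (1) ($\beta\ge 1$), $Y_1$ is sub-Weibull of index $\beta$, and a generalized Bernstein inequality produces a bound of the form $2\exp(-c\min(n\tilde\epsilon^2, n\tilde\epsilon^\beta))$; splitting according to whether $\tilde\epsilon\le 1$ or $\tilde\epsilon>1$ yields the claimed two-regime inequality. In part (2) ($\beta\in(0,1)$), the tail is heavier than sub-exponential, so I would truncate each $Y_i$ at a level $T$: classical Bernstein applied to $Y_i\mathbf{1}_{\{Y_i\le T\}}$ produces the summand $e^{-cn\tilde\epsilon^2}$, while the union bound $\mathbb{P}(\max_i Y_i>T)\le n e^{-aT^\beta}$ produces the summand $e^{-c(n\tilde\epsilon)^\beta}$ once $T$ is chosen to scale like $n\tilde\epsilon$.

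For parts (3) and (4) only polynomial moments are available. I would apply the Fuk--Nagaev inequality in the form obtained by truncating $Y_i$ at $T\sim n\tilde\epsilon$: when $t>2$ the truncated variables $Y_i\mathbf{1}_{\{Y_i\le T\}}$ have finite variance and Bernstein yields the contribution $e^{-cn\tilde\epsilon^2}$, while Markov's inequality combined with the union bound gives $\mathbb{P}(\max_i Y_i>T)\le n\E[Y_1^t]/T^t\lesssim n(n\tilde\epsilon)^{-t}$. When $t\in[1,2]$ the variance of the truncated summand may still be infinite, so the Bernstein contribution is absent and only the polynomial tail term survives; alternatively, one may invoke a Marcinkiewicz--Zygmund--Rosenthal inequality of exponent $t$.

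The main obstacle is the sharp dependence of the exponent on $\tilde\epsilon$: part (1) produces $n\tilde\epsilon^\beta$ (sub-exponential-like, driven by the sum) while part (2) produces $(n\tilde\epsilon)^\beta$ (heavy-tailed, driven by the maximum of the summands). Calibrating the truncation level to recover this dichotomy, and controlling the constants uniformly across $\tilde\epsilon$ near the boundary $\tilde\epsilon=1$, is the most delicate ingredient; this is precisely what is achieved by the Fuk--Nagaev inequality in its fully optimized form.
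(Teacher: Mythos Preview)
The paper does not prove this lemma at all: it is quoted verbatim from \cite{larsson2023concentration} (Proposition 3.1 there) and is included only ``for the convenience of the reader,'' with no proof or proof sketch given. So there is nothing in the paper to compare your proposal against.

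That said, your sketch is the right shape for how such a result is obtained: reducing to the scalar i.i.d.\ sum $\frac{1}{n}\sum_i(Y_i-\E Y_i)$ with $Y_i=\|X_i\|_\H^s$ and then applying a Fuk--Nagaev/Bernstein-type inequality calibrated to the tail of $Y_1$ is precisely the strategy behind the cited proposition. Your identification of the dichotomy between the exponent $n\tilde\epsilon^\beta$ in part (1) and $(n\tilde\epsilon)^\beta$ in part (2) as stemming from ``sum-dominated'' versus ``maximum-dominated'' behaviour is also correct. If you want to turn this into a self-contained proof you would need to either carry out the truncation-and-optimize argument carefully (tracking constants) or invoke an off-the-shelf Fuk--Nagaev inequality in the appropriate form; the paper simply defers to the reference for this.
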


From Corollary \ref{cor:conc_fin} we can derive the following:

\begin{theorem}[Expectation of $\overline{\mathcal{W}}_p$]\label{thm:exp_fin}
Under Assumption \ref{ass:s_moment} we have 
\begin{align*}
\E\left[\overline{\mathcal{W}}_p(\mu_n, \mu)^p\right]\le  C  \log(2n+1)^{\frac{p}{ s}+ \frac{1}{2}}  \sqrt{\frac{d}{n}}.
\end{align*}
The constant $C$ depends only on $p,s$, and $M_s(\mu)$.
\end{theorem}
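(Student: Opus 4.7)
The strategy is to derive the expectation bound directly from Theorem \ref{thm:conc_fin} by a tail integration argument, being careful to handle the random factor $\sqrt{M_s(\mu)\vee M_s(\mu_n)}$ that appears inside the concentration estimate. I will use Theorem \ref{thm:conc_fin} rather than Corollary \ref{cor:conc_fin}, since the residual polynomial term $2M_s(\mu)/\tilde\epsilon$ in the corollary does not decay with $\epsilon$, making tail integration clumsy.

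First, I would introduce the normalized random variable
\[
Z := \frac{\bigl(\overline{\mathcal{W}}_p(\mu_n,\mu)^p/[c\log(2n+1)^{p/s}] - \sqrt{d/n}\bigr)_+}{1+\sqrt{M_s(\mu)\vee M_s(\mu_n)}},
\]
where $c$ is the constant appearing in Theorem \ref{thm:conc_fin}. By construction, Theorem \ref{thm:conc_fin} is equivalent to the tail bound
\[
\P(Z\ge \epsilon) \le e^{-n\epsilon^2/2} + 8e^{\log(2n+1)[2(d+1)-n\epsilon^2/64]},\qquad \epsilon>0,
\]
and $\overline{\mathcal{W}}_p(\mu_n,\mu)^p \le c\log(2n+1)^{p/s}\bigl[\sqrt{d/n} + (1+\sqrt{M_s(\mu)\vee M_s(\mu_n)})Z\bigr]$. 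Taking expectation and applying Cauchy--Schwarz decouples $Z$ from the random moment factor:
\[
\E\bigl[(1+\sqrt{M_s(\mu)\vee M_s(\mu_n)})Z\bigr] \le \sqrt{2 + 2\E[M_s(\mu)\vee M_s(\mu_n)]}\cdot \sqrt{\E[Z^2]},
\]
and the first factor is bounded by a constant depending only on $M_s(\mu)$ using $M_s(\mu)\vee M_s(\mu_n)\le M_s(\mu)+M_s(\mu_n)$ and $\E[M_s(\mu_n)]=M_s(\mu)$.

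Second, I would control $\E[Z^2] = \int_0^\infty 2\epsilon\,\P(Z\ge \epsilon)\,d\epsilon$ by splitting the integral at $\epsilon_\star := C_0\sqrt{d\log(2n+1)/n}$, with $C_0$ chosen sufficiently large. On $[0,\epsilon_\star]$ I would use the trivial bound $\P(Z\ge \epsilon)\le 1$, contributing $\epsilon_\star^2 \lesssim d\log(2n+1)/n$. On $(\epsilon_\star,\infty)$ I would plug in the tail bound: the Gaussian term integrates to $2/n$, while for the second term, the choice of $C_0$ ensures that the exponent $\log(2n+1)[2(d+1) - n\epsilon^2/64]$ is strictly more negative than $-\log(2n+1)^2(d+1)$ for $\epsilon\ge \epsilon_\star$, which kills the prefactor $(2n+1)^{2(d+1)}$ and leaves a contribution of order $1/n$. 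Combining these pieces yields $\sqrt{\E[Z^2]} \lesssim \sqrt{d\log(2n+1)/n}$, and substituting back gives
\[
\E[\overline{\mathcal{W}}_p(\mu_n,\mu)^p] \lesssim \log(2n+1)^{p/s}\Bigl[\sqrt{d/n} + \sqrt{\log(2n+1)}\sqrt{d/n}\Bigr] \lesssim \log(2n+1)^{p/s+1/2}\sqrt{d/n}.
\]

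The only delicate point is the calibration of $\epsilon_\star$ so that the Gaussian decay in the tail integral absorbs the polynomial prefactor $(2n+1)^{2(d+1)}$. This is precisely what forces the extra $\sqrt{\log(2n+1)}$ factor over the ``bare'' rate $\sqrt{d/n}$ and explains the exponent $p/s + 1/2$ in the final estimate. Everything else is routine moment bookkeeping.
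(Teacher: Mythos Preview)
Your argument is correct, and it takes a genuinely different route from the paper's proof. The paper works from Corollary~\ref{cor:conc_fin}: it couples the two free parameters via $\epsilon = 8v\sqrt{2(d+1)\log(2n+1)/n}$ and $\tilde\epsilon = v^4$, so that the residual Markov term $2M_s(\mu)/\tilde\epsilon$ becomes $2M_s(\mu)/v^4$, which \emph{is} integrable in $v$ on $[1,\infty)$; then it bounds the deviation threshold by a multiple of $v^3$ and integrates the resulting tail in $u=v^3$. So the polynomial term you flagged as ``clumsy'' is exactly what the paper tames by tying $\tilde\epsilon$ to $\epsilon$.

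Your alternative avoids this two-parameter game entirely: by factoring the random threshold into the normalized variable $Z$ and then decoupling via Cauchy--Schwarz together with $\E[M_s(\mu_n)]=M_s(\mu)$, you never need to control $|M_s(\mu_n)-M_s(\mu)|$ in probability at all. This is cleaner for the finite-dimensional statement and gives the same final bound. The paper's route, on the other hand, is the template reused verbatim in the infinite-dimensional proofs (Theorems~\ref{thm:exp_exp} and~\ref{thm:exp_poly}), where the extra additive error terms from the projection step are already polynomial in the integration variable, so the $2M_s(\mu)/\tilde\epsilon$ term fits naturally alongside them.
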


As mentioned in the Introduction, these rates are optimal up to logarithmic factors.

All of the results above are based on the following well-known uniform ratio bounds, which we state here for completeness.

\begin{theorem}[Uniform ratio bounds] \label{thm:unif_finite}
For any $\epsilon>0$ we have
\begin{align*}
&\P\left(\sup_{(\mathbf{\theta},t)\in \H \times {\mathbb R}}\frac{|F_{\mathbf{\theta}}(t)-F_{\mathbf{\theta},n}(t)|}{\sqrt{F_\theta(t)\vee F_{\theta,n}(t)}}\ge \epsilon \right)\le 8e^{(d+1)\log(2n+1)-4n\epsilon^2}.
\end{align*}
\end{theorem}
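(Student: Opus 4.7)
The statement is a classical Vapnik--Chervonenkis style uniform ratio inequality for the VC class of (closed) half-spaces in $\R^d$. Concretely, writing $\mathcal{C}=\{H_{\theta,t}:\theta\in\mathcal{H},\,\|\theta\|_{\mathcal{H}}=1,\,t\in\R\}$ with $H_{\theta,t}=\{x:\langle x,\theta\rangle\le t\}$, we have $F_{\theta}(t)=\mu(H_{\theta,t})$ and $F_{\theta,n}(t)=\mu_n(H_{\theta,t})$, and the ratio is scale-invariant in $\theta$ so restricting to unit vectors loses nothing. Since $\mathcal{C}$ has VC dimension at most $d+1$, the Sauer--Shelah lemma gives the shatter-coefficient bound $S_{\mathcal{C}}(2n)\le(2n+1)^{d+1}$, which is exactly the source of the prefactor $e^{(d+1)\log(2n+1)}$ in the claimed inequality.

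I would follow the standard three-step empirical-process route. First I would perform a symmetrization: introduce an independent ghost empirical measure $\mu_n'$ and, at the cost of a factor $2$ in the probability and a regime restriction of the type $n\epsilon^2\gtrsim 1$ (absorbed into the prefactor), replace the population quantity $F_\theta$ by $F_{\theta,n}'$ both in the numerator and in the denominator of the ratio. Second, conditionally on the combined sample $(X_1,\dots,X_n,X_1',\dots,X_n')$, the supremum over $\mathcal{C}$ reduces to a maximum over the induced trace of $\mathcal{C}$ on $2n$ points, which by Sauer--Shelah has at most $(2n+1)^{d+1}$ elements; a union bound then transfers this combinatorial growth into the prefactor. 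Third, I would randomize with Rademacher signs $\varepsilon_i\in\{\pm1\}$ and apply a Bernstein/Hoeffding-type tail inequality to each fixed $C$: since $\mathrm{Var}(\mathbf{1}_C)\le\mu(C)\vee\mu_n(C)$, dividing by $\sqrt{F_\theta(t)\vee F_{\theta,n}(t)}$ absorbs the variance, which turns Bernstein into a purely sub-Gaussian tail of the form $\exp(-cn\epsilon^2)$. Careful tracking of the constants through the symmetrization, the ratio normalization, and Bernstein yields the stated exponent $4n\epsilon^2$ and overall prefactor $8$.

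The main obstacle is the regime where $F_\theta(t)\vee F_{\theta,n}(t)$ is very small, since there naive Bernstein bounds degenerate and the ratio $|F_\theta(t)-F_{\theta,n}(t)|/\sqrt{F_\theta(t)\vee F_{\theta,n}(t)}$ is dominated by shot noise. This is exactly why the self-normalized variant (with $\max(F_\theta(t),F_{\theta,n}(t))$ under the square root rather than $F_\theta(t)$ alone) is used: the denominator is automatically of the correct order on the random event where $\mu_n(C)$ is not much smaller than $\mu(C)$, and the complementary event is itself controlled by a one-sided ratio bound. A second, purely bookkeeping, difficulty is to recover the precise constants $8$ and $4$, which match the Alexander-style version of the Vapnik--Chervonenkis ratio inequality; once the half-space class is identified and its VC dimension is fixed at $d+1$, the statement can essentially be read off from known results such as those in \cite{alexander1987central, gine2003ratio, bartl2023variance}.
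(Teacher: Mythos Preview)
Your proposal is correct in spirit and would lead to a valid proof, but it takes a longer and slightly different route than the paper. The paper does not reconstruct the relative VC inequality from symmetrization as you outline; instead it observes the elementary algebraic splitting
\[
\frac{|F_\theta(t)-F_{\theta,n}(t)|}{\sqrt{F_\theta(t)\vee F_{\theta,n}(t)}}
= \frac{(F_\theta(t)-F_{\theta,n}(t))^+}{\sqrt{F_\theta(t)}}
\vee \frac{(F_{\theta,n}(t)-F_\theta(t))^+}{\sqrt{F_{\theta,n}(t)}},
\]
so that the two-sided ratio event is contained in the union of two one-sided ratio events. Each of these is then handled by a direct citation of the ``relative VC'' inequalities in Devroye--Lugosi (\cite[Exercises 3.3 and 3.4]{devroye2001combinatorial}), each giving a bound $4\,\E[\mathcal{S}_{\mathcal{J}}(X_{1:2n})]e^{-n\epsilon^2/4}$, and the shatter coefficient is bounded by $(2n+1)^{d+1}$ using that half-spaces have VC dimension $d+1$. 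Your three-step symmetrization/union-bound/Bernstein program is precisely what underlies those Devroye--Lugosi exercises, so you are effectively reproving the black box the paper invokes. The advantage of your approach is self-containment; the paper's advantage is brevity and a cleaner handling of the small-probability regime via the deterministic split above, which avoids the somewhat delicate ``complementary event'' argument you sketch. Note also that the exponent in the paper's Lemma~\ref{lem:ratio} is $n\epsilon^2/4$, not $4n\epsilon^2$ as in the theorem statement; this is a typographical inconsistency in the paper rather than an issue with your argument.
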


\subsection{Exponential decay} \label{sec:expo_dec}

We now turn the case $\text{dim}(\mathcal{H})=\infty$. For this we first need to strengthen Assumption \ref{ass:s_moment}.
\begin{assumption}\label{ass:mu}
For some $s>2p$,
\begin{align}
 \widetilde{M}_s(\mu):=\int \Big(\sum_{j=1}^{\infty}\frac{1}{\lambda_j^2}\langle x, \psi_j\rangle^2_{L^2} \Big)^{\frac{s}{2}}\,\mu(dx) <\infty.
\label{eq:moments}
\end{align}
\end{assumption}

We also make the following assumption about  exponential decay of the eigenvalues of $K$:

\begin{assumption}\label{ass:exponential decay}
We have $\lambda_j\leq C\exp(-c j^{\gamma})$ for all $j\geq 1$, where the constants $\gamma, c, C>0$.     
\end{assumption}

At least on an informal level, Assumptions \ref{ass:mu} and \ref{ass:exponential decay} restrict the amount of probability mass, that $\mu$ can place outside of a $d$-dimensional subspace of $\mathcal{H}$; see Lemma \ref{lem:exp} for a precise mathematical statement.
Under this assumption one can then carefully trade off the finite-dimensional case against the decay of eigenvalues to get the following results:

\begin{theorem}[Concentration of $\overline{\mathcal{W}}_p$]\label{thm:conc_exp}
Under Assumptions \ref{ass:mu} and \ref{ass:exponential decay}, we have  
\begin{align*}
&\P\left( \overline{\mathcal{W}}_p(\mu_n, \mu)^p \ge c\log(2n+1)^{\frac{p}{s}+\frac{1}{2\gamma}} \left[\sqrt{\frac{1}{n}} +\left(2+\sqrt{M_{s}(\mu) \vee M_{s}(\mu_n)}\right) \epsilon \right] \right)\\
&\le  Ce^{c\log(2n+1)^{1+\frac{1}{\gamma}}-\frac{n\epsilon^2}{64}}+ \frac{C}{(n\epsilon^2)^{\frac{s}{2p}}},
\end{align*}
for all $\epsilon>0$. The constants $c,C$ depend only on $p,s,\gamma, \widetilde{M}_p(\mu)$.
\end{theorem}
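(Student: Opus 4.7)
The plan is a truncation-to-finite-dimensions strategy: for each unit direction $\theta\in\H$, project onto a $d$-dimensional subspace, apply the finite-dimensional Theorem \ref{thm:conc_fin}, and absorb the residual into a deterministic Cauchy--Schwarz bound controlled by Assumption \ref{ass:mu} together with the exponential decay in Assumption \ref{ass:exponential decay}. The parameter $d$ is then tuned so that the truncation error is negligible compared with $1/\sqrt n$; under exponential decay this forces $d\asymp (\log n)^{1/\gamma}$, which directly produces the $\log(2n+1)^{1/(2\gamma)}$ and $\log(2n+1)^{1+1/\gamma}$ factors in the statement.

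\smallskip
\emph{Decomposition and truncation bound.} The scaled eigenfunctions $e_j=\sqrt{\lambda_j}\psi_j$ form an $\H$-orthonormal basis, and \eqref{eq:inner_prod} gives $\langle x,e_j\rangle_\H=\langle x,\psi_j\rangle_{L^2}/\sqrt{\lambda_j}$. Fix $d\in\N$, let $V_d=\operatorname{span}(e_1,\dots,e_d)$, and for a unit $\theta=\sum_j\alpha_je_j$ set $\tilde\theta$ to be its $\H$-orthogonal projection onto $V_d$. The triangle inequality for $\mathcal W_p$ together with $(a+b+c)^p\le 3^{p-1}(a^p+b^p+c^p)$ yields
\begin{align*}
\mathcal W_p(\mu_\theta,\mu_{n,\theta})^p\le 3^{p-1}\Big[\mathcal W_p(\mu_\theta,\mu_{\tilde\theta})^p+\mathcal W_p(\mu_{\tilde\theta},\mu_{n,\tilde\theta})^p+\mathcal W_p(\mu_{n,\tilde\theta},\mu_{n,\theta})^p\Big].
\end{align*}
For the outer terms I use the trivial coupling $\mathcal W_p(\mu_\theta,\mu_{\tilde\theta})^p\le\int|\langle x,\theta-\tilde\theta\rangle_\H|^p\,\mu(dx)$, and Cauchy--Schwarz on the pairing $\sum_{j>d}(\alpha_j\sqrt{\lambda_j})(\langle x,\psi_j\rangle_{L^2}/\lambda_j)$, combined with $\sum_{j>d}\alpha_j^2\lambda_j\le\lambda_{d+1}$, to get $|\langle x,\theta-\tilde\theta\rangle_\H|^2\le\lambda_{d+1}\sum_{j\ge 1}\langle x,\psi_j\rangle_{L^2}^2/\lambda_j^2$. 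Raising to the $p/2$th power and integrating produces the uniform, $\theta$-free bound $\sup_{\|\theta\|=1}\mathcal W_p(\mu_\theta,\mu_{\tilde\theta})^p\le \lambda_{d+1}^{p/2}\widetilde M_p(\mu)$, and analogously with $\mu_n$ in place of $\mu$.

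\smallskip
\emph{Finite-dimensional piece and empirical moment.} The middle term depends only on $P_d\mu$ and $P_d\mu_n$; identifying $V_d$ with $\R^d$, it is bounded above by $\overline{\mathcal W}_p(P_d\mu,P_d\mu_n)^p$, and since $M_s(P_d\mu)\le M_s(\mu)$, Theorem \ref{thm:conc_fin} applied on $V_d$ controls it by $c\log(2n+1)^{p/s}[\sqrt{d/n}+(1+\sqrt{M_s(\mu)\vee M_s(\mu_n)})\epsilon]$ outside an event of probability at most $e^{-n\epsilon^2/2}+8\exp(\log(2n+1)[2(d+1)-n\epsilon^2/64])$. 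The random factor $\widetilde M_p(\mu_n)=\tfrac1n\sum_iY_i$, with $Y_i=(\sum_j\langle X_i,\psi_j\rangle_{L^2}^2/\lambda_j^2)^{p/2}$, satisfies $\E Y_i=\widetilde M_p(\mu)$ and $\E Y_i^{s/p}=\widetilde M_s(\mu)$ with $s/p>2$; Rosenthal's inequality followed by Markov's then gives $\P(\widetilde M_p(\mu_n)>\widetilde M_p(\mu)+\epsilon)\le C/(n\epsilon^2)^{s/(2p)}$, accounting for the polynomial tail in the statement.

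\smallskip
\emph{Balancing.} Choose $d$ minimal with $\lambda_{d+1}^{p/2}\le 1/\sqrt n$; Assumption \ref{ass:exponential decay} forces $d\le C\log(2n+1)^{1/\gamma}$, so that $\sqrt{d/n}\lesssim \log(2n+1)^{1/(2\gamma)}/\sqrt n$ and $2(d+1)\log(2n+1)\lesssim \log(2n+1)^{1+1/\gamma}$, matching the factors in the theorem. A union bound over the two random events above, combined with the deterministic truncation bound and absorption of $3^{p-1}$, $\widetilde M_p(\mu)$, and similar constants into $c,C$, delivers the claim. The main obstacle lies precisely in this balancing: the tail of Theorem \ref{thm:conc_fin} contains a term linear in $d$ inside the exponent, so $d$ must be at most logarithmic in $n$, while the truncation error $\lambda_{d+1}^{p/2}$ must simultaneously be $\lesssim 1/\sqrt n$; only the exponential decay of Assumption \ref{ass:exponential decay} is fast enough to reconcile these two competing requirements at the stated $\log(2n+1)$ exponents.
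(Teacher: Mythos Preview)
Your approach is essentially the same as the paper's: decompose via a $d$-dimensional projection, apply Theorem \ref{thm:conc_fin} to the projected piece, bound the truncation errors via Assumption \ref{ass:mu} and the eigenvalue decay, then choose $d\asymp(\log n)^{1/\gamma}$. Your choice to project the \emph{direction} $\theta$ rather than the \emph{point} $x$ is cosmetic, since $\langle \Pd x,\theta\rangle_\H=\langle x,\Pd\theta\rangle_\H$; in particular your middle term coincides with the paper's $\overline{\mathcal W}_p(\mu^d,\mu_n^d)$ and your deterministic outer bound $\lambda_{d+1}^{p/2}\widetilde M_p(\mu)$ is exactly what the paper obtains in Lemma \ref{lem:exp}.

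The one substantive deviation is your handling of the random truncation term. You control $\widetilde M_p(\mu_n)$ by Rosenthal's inequality to get $\P(\widetilde M_p(\mu_n)>\widetilde M_p(\mu)+\epsilon)\le C/(n\epsilon^2)^{s/(2p)}$; on the complement, $\lambda_{d+1}^{p/2}\widetilde M_p(\mu_n)\le n^{-1/2}(\widetilde M_p(\mu)+\epsilon)$ is absorbed into the threshold. This is correct, but more machinery than needed. The paper instead applies plain Markov and Jensen (Lemma \ref{lem:fintoinf_prob}) directly to $\int\|\Pd x-x\|_\H^p\,\mu_n(dx)$, obtaining
\[
\P\!\Big(\overline{\mathcal W}_p(\mu_n^d,\mu_n)^p\ge\epsilon\Big)\le \epsilon^{-s/p}\int\|\Pd x-x\|_\H^s\,\mu(dx)\le C\epsilon^{-s/p}\lambda_d^{s/2},
\]
which after the choice of $d$ yields the same tail $C/(n\epsilon^2)^{s/(2p)}$ with no appeal to Rosenthal. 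Your route works but obscures that only the first moment of $\widetilde M_s(\mu_n)$ is required; the paper's argument is the cleaner one to record.
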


As in the finite dimensional case, one can further estimate $M_s(\mu_n)$ to obtain a deterministic bound of $\overline{\mathcal{W}}_p(\mu_n, \mu)^p$. Furthermore, one can derive rates for the expectation of $\overline{\mathcal{W}}_p(\mu_n,\mu)$ from Theorem \ref{thm:conc_exp}.

\begin{theorem}[Expectation of $\overline{\mathcal{W}}_p$]
\label{thm:exp_exp}
Under Assumptions \ref{ass:mu} and \ref{ass:exponential decay}, we have 
\begin{align*}
\E\left[\overline{\mathcal{W}}_p(\mu_n, \mu)^p\right]\le   C\frac{\log(2n+1)^{\frac{p}{s}+\frac{1}{2}+\frac{1}{\gamma}}}{\sqrt{n}}.
\end{align*}
The constant $C$ depends only on $p,s, \gamma, M_s(\mu)$.
\end{theorem}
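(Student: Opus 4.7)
The plan is to integrate the tail inequality of Theorem \ref{thm:conc_exp} to obtain the expectation bound. Introduce the shorthand $Y := \overline{\mathcal{W}}_p(\mu_n,\mu)^p$, $a_n := c\log(2n+1)^{p/s+1/(2\gamma)}$, $b_n := n^{-1/2}$, and $Z_n := 2+\sqrt{M_s(\mu)\vee M_s(\mu_n)}$, so that Theorem \ref{thm:conc_exp} rewrites, for every $\epsilon>0$, as
\[
\P\bigl(Y \ge a_n(b_n + Z_n \epsilon)\bigr) \le C\exp\bigl(c\log(2n+1)^{1+1/\gamma} - n\epsilon^2/64\bigr) + C(n\epsilon^2)^{-s/(2p)}.
\]
Setting $\epsilon^\ast := \max\{0,(Y-a_n b_n)/(a_n Z_n)\}$, one has $Y \le a_n b_n + a_n Z_n \epsilon^\ast$ pointwise, so Cauchy--Schwarz gives
\[
\E[Y] \le a_n b_n + a_n \sqrt{\E[Z_n^2]\cdot \E[(\epsilon^\ast)^2]},
\]
which conveniently decouples the random moment factor from the tail integral.

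For $\E[Z_n^2]$, the elementary bound $M_s(\mu)\vee M_s(\mu_n)\le M_s(\mu)+M_s(\mu_n)$ combined with $\E[M_s(\mu_n)]=M_s(\mu)$ yields $\E[Z_n^2]\le 8+4M_s(\mu)<\infty$; here $M_s(\mu)<\infty$ follows from Assumption \ref{ass:mu} by comparing $\|x\|_\mathcal{H}^2=\sum_j\langle x,\psi_j\rangle_{L^2}^2/\lambda_j$ to the weighted sum in \eqref{eq:moments}, using that only finitely many $\lambda_j$ exceed $1$. For $\E[(\epsilon^\ast)^2]=2\int_0^\infty \epsilon\,\P(\epsilon^\ast\ge\epsilon)\,d\epsilon$, I would plug in the tail estimate and split the integral at $\epsilon_0 := C_0\sqrt{\log(2n+1)^{1+1/\gamma}/n}$, choosing $C_0$ large enough that $Ce^{c\log(2n+1)^{1+1/\gamma}-n\epsilon_0^2/64}\le e^{-c\log(2n+1)^{1+1/\gamma}}$.

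On $[0,\epsilon_0]$ the trivial bound $\P\le 1$ contributes $\epsilon_0^2\asymp \log(2n+1)^{1+1/\gamma}/n$. On $[\epsilon_0,\infty)$ the sub-Gaussian part integrates to a quantity exponentially small in $\log(2n+1)^{1+1/\gamma}$, while the polynomial part is integrable against $\epsilon\,d\epsilon$ precisely because $s>2p$, and, after substituting $\epsilon_0$, evaluates to $\lesssim n^{-1}\log(2n+1)^{(1+1/\gamma)(1-s/(2p))}=O(1/n)$. Altogether $\E[(\epsilon^\ast)^2]\lesssim\log(2n+1)^{1+1/\gamma}/n$, and therefore
\[
a_n\sqrt{\E[Z_n^2]\,\E[(\epsilon^\ast)^2]} \lesssim \log(2n+1)^{p/s+1/(2\gamma)}\cdot\frac{\log(2n+1)^{(1+1/\gamma)/2}}{\sqrt{n}} = \frac{\log(2n+1)^{p/s+1/2+1/\gamma}}{\sqrt{n}},
\]
which dominates $a_n b_n$ and matches the announced rate.

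The main technical obstacle is the exponent bookkeeping in this two-piece integration: one must verify that the polynomial tail $(n\epsilon^2)^{-s/(2p)}$ contributes at the same $n^{-1}$ rate as the sub-Gaussian one (this is where $s>2p$ enters sharply, through integrability of $\epsilon^{1-s/p}$ at infinity) so that the final power of $\log(2n+1)$ stays exactly at $p/s+1/2+1/\gamma$, with no logarithmic overhead introduced by the interaction between the threshold $\epsilon_0$ and the polynomial tail.
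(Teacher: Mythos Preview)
Your proof is correct and reaches the stated rate, but it proceeds differently from the paper's argument. The paper first \emph{derandomizes} the factor $M_s(\mu)\vee M_s(\mu_n)$: it applies a union bound with Markov's inequality (equation \eqref{eq:moment_bound}) to replace $M_s(\mu_n)$ by $M_s(\mu)+\tilde\epsilon$ at the cost of an additive $2M_s(\mu)/\tilde\epsilon$ in the tail probability, then parametrizes $\epsilon=8v\sqrt{C\log(2n+1)^{1+1/\gamma}/n}$ and $\tilde\epsilon=v^{s/p}$ and integrates in $u=v^{1+s/(2p)}$ via $\E[Z]\le 1+\int_1^\infty\P(Z\ge u)\,du$. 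Your route instead keeps $Z_n$ random and decouples it from the tail variable $\epsilon^\ast$ through Cauchy--Schwarz, bounding $\E[Z_n^2]$ directly by the unbiasedness $\E[M_s(\mu_n)]=M_s(\mu)$ and then integrating $\E[(\epsilon^\ast)^2]$ against the concentration inequality by splitting at a threshold $\epsilon_0$. Both methods land on the same logarithmic exponent $p/s+1/2+1/\gamma$; your approach is somewhat cleaner in that it avoids the auxiliary parameter $\tilde\epsilon$ and the ad hoc change of variables, while the paper's approach has the virtue of reusing verbatim the template from the proof of Theorem \ref{thm:exp_fin}. One small remark: your justification that $M_s(\mu)<\infty$ follows from Assumption \ref{ass:mu} is correct but the phrasing ``only finitely many $\lambda_j$ exceed $1$'' is not quite the point---the clean inequality is $\|x\|_\mathcal{H}^2=\sum_j\lambda_j^{-1}\langle x,\psi_j\rangle_{L^2}^2\le(\max_j\lambda_j)\sum_j\lambda_j^{-2}\langle x,\psi_j\rangle_{L^2}^2$, which immediately gives $M_s(\mu)\le\lambda_1^{s/2}\widetilde M_s(\mu)$.
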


In order to obtain uniform ratio bounds in the infinite dimensional case, we need to introduce a third assumption.

\begin{assumption}\label{ass:exp2}
For all $d\geq 1$ we have $$\mu\Big(\sum_{j=d+1}^\infty \frac{1}{\lambda_j}  \langle x,\psi_j\rangle_{L^2}^2 >0 \Big)\leq C\exp(-c d^{\gamma}).$$ 
\end{assumption}

Assumption \ref{ass:exp2} is a strengthening of Assumptions \ref{ass:mu} and \ref{ass:exponential decay}, as these only imply 
\begin{align}\label{eq:bullshit}
\mu\Big(\sum_{j=d+1}^\infty \frac{1}{\lambda_j}  \langle x,\psi_j\rangle_{L^2}^2 >\epsilon \Big)\le \frac{C}{\epsilon^{s/2}} \exp(-csd^\gamma).
\end{align}
Indeed, this follows by Markov's inequality and Lemma \ref{lem:exp}. In particular, letting $\epsilon\downarrow 0,$ the right side of \eqref{eq:bullshit} explodes. In this sense, Assumption \ref{ass:exp2} is stricter than Assumptions \ref{ass:mu} and \ref{ass:exponential decay}. Indeed, under Assumption \ref{ass:exp2}, the mass $\mu$ assigns to the spaces $\text{span}(\{\sqrt{\lambda_j} \psi_j: j=d+1,\dots \})$ has to decrease exponentially in $d.$

\begin{theorem}[Uniform ratio bounds]\label{thm:unif_exp}
Under Assumption \ref{ass:exp2}, we have
\begin{align*}
&\P\left(\sup_{(\mathbf{\theta},t)\in \H \times {\mathbb R}}\frac{|F_{\mathbf{\theta}}(t)-F_{\mathbf{\theta},n}(t)|}{\sqrt{F_\theta(t)\vee F_{\theta,n}(t)}}\ge \epsilon \right)\le Ce^{C\log(2n+1)^{1+\frac{1}{\gamma}}-c n\epsilon^2},
\end{align*}    
for all $\epsilon>0$, where $c>0$ is an absolute constant and $C>0$ depends on $\gamma$.
\end{theorem}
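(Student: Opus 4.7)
The plan is to reduce to the finite-dimensional uniform ratio bound (Theorem \ref{thm:unif_finite}) by truncating $\H$ to the finite-dimensional subspace $\H_d:=\mathrm{span}\{\sqrt{\lambda_j}\psi_j:j=1,\dots,d\}$ and then choosing $d\asymp\log(2n+1)^{1/\gamma}$ to balance the truncation error against the VC cost. The key geometric fact is that for any unit $\theta=\theta_d+\theta^\perp\in\H$ with $\theta_d\in\H_d$ one has $\langle x,\theta\rangle_\H=\langle x,\theta_d\rangle_\H$ whenever $x\in\H_d$. Consequently, on the high-probability event $E:=\{X_i\in\H_d\text{ for all }i\}$---which, by Assumption \ref{ass:exp2}, fails with probability at most $nC\exp(-cd^\gamma)$---every empirical projection depends only on the finite-dimensional part $\theta_d$.

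Introduce the truncated sub-CDFs $u_d(\theta,t):=\mu(\{x\in\H_d:\langle x,\theta_d\rangle_\H\le t\})$ and its empirical counterpart $U_d(\theta,t):=\mu_n(\{x\in\H_d:\langle x,\theta_d\rangle_\H\le t\})$. Then $u_d\le F_\theta\le u_d+\eta$ with $\eta:=\mu(\H\setminus\H_d)\le C\exp(-cd^\gamma)$, and $U_d=F_{\theta,n}$ on $E$. A short case split---separating $u_d\vee U_d\ge\eta$ (where $\eta/\sqrt{u_d\vee U_d}\le\sqrt\eta$) from $u_d\vee U_d<\eta$ (where $F_\theta,F_{\theta,n}<2\eta$ directly so the entire ratio is $\le\sqrt{2\eta}$)---yields the pointwise bound, valid on $E$ uniformly in $(\theta,t)$,
\begin{align*}
\frac{|F_\theta(t)-F_{\theta,n}(t)|}{\sqrt{F_\theta(t)\vee F_{\theta,n}(t)}}\le\sqrt{2\eta}+\frac{|u_d(\theta,t)-U_d(\theta,t)|}{\sqrt{u_d(\theta,t)\vee U_d(\theta,t)}}.
\end{align*}

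Identifying $\H_d\cong\R^d$ through the $\H$-orthonormal basis $\{\sqrt{\lambda_j}\psi_j\}_{j\le d}$, the class $\{\{x\in\H_d:\langle x,\theta_d\rangle_\H\le t\}:\theta_d\in\H_d,\,t\in\R\}$ is the intersection of halfspaces (VC dimension $d+1$) with the fixed set $\H_d$ and is therefore still a VC class of dimension $\le d+1$. The exact argument proving Theorem \ref{thm:unif_finite} then yields
\begin{align*}
\P\Bigl(\sup_{(\theta,t)}\frac{|u_d(\theta,t)-U_d(\theta,t)|}{\sqrt{u_d(\theta,t)\vee U_d(\theta,t)}}\ge\epsilon/2\Bigr)\le 8\exp\bigl(c_1 d\log(2n+1)-c_2 n\epsilon^2\bigr).
\end{align*}
Setting $d=\lceil K\log(2n+1)^{1/\gamma}\rceil$ with $K$ large enough that $cK^\gamma\ge 4$ (say) ensures $\sqrt{2\eta}\le\epsilon/2$ in the non-trivial regime $\epsilon^2\ge(2n+1)^{-cK^\gamma/2}$ and keeps $\P(E^c)$ polynomially small in $n$. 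A union bound now gives the claimed form $Ce^{C\log(2n+1)^{1+1/\gamma}-cn\epsilon^2}$; in the complementary regime $\epsilon^2<(2n+1)^{-cK^\gamma/2}$ one has $cn\epsilon^2\lesssim 1$, so the target right-hand side already exceeds $1$ and the statement is trivial.

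The main technical point is the reuse of the VC ratio bound of Theorem \ref{thm:unif_finite} for the modified class of halfspaces intersected with $\H_d$; this is immediate since intersecting with a fixed set does not enlarge VC dimension. The only other subtlety is bookkeeping: one must check that all truncation-related error terms ($\sqrt{2\eta}$, $\P(E^c)$, and the $c_1 d\log(2n+1)$ contribution inside the VC exponent) are simultaneously controlled by the target exponent $C\log(2n+1)^{1+1/\gamma}$ under the choice $d\asymp\log(2n+1)^{1/\gamma}$. This is precisely the trade-off that Assumption \ref{ass:exp2}---with its genuine exponential (not merely $s$-th moment) decay of $\mu(\H\setminus\H_d)$---is engineered to support.
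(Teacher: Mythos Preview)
Your approach is genuinely different from the paper's, and it contains a gap that prevents it from yielding the stated bound.

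The paper does not restrict to the event $E=\{X_i\in\H_d\text{ for all }i\le 2n\}$. Instead it invokes the general shatter-coefficient bound of Lemma~\ref{lem:ratio},
\[
\P\Bigl(\sup_{(\theta,t)}\frac{|F_\theta-F_{\theta,n}|}{\sqrt{F_\theta\vee F_{\theta,n}}}\ge\epsilon\Bigr)\le 8\,\E\bigl[\mathcal S_{\mathcal J}(X_{1:2n})\bigr]\,e^{-cn\epsilon^2},
\]
and bounds $\E[\mathcal S_{\mathcal J}(X_{1:2n})]$ by conditioning on the \emph{number} $k$ of samples falling outside $\H_d$: on that event $\mathcal S_{\mathcal J}(X_{1:2n})\le 2^k(2n-k+1)^{d+1}$, and the binomial weights $\binom{2n}{k}\,\mu(\H\setminus\H_d)^k\le(Cn\,e^{-cd^\gamma})^k$ form a geometric series summing to $O(1)$ once $d\asymp\log(2n+1)^{1/\gamma}$. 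This keeps the full $e^{-cn\epsilon^2}$ factor intact and gives $\E[\mathcal S_{\mathcal J}]\le e^{C\log(2n+1)^{1+1/\gamma}}$, exactly the claimed form.

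Your union bound, by contrast, produces
\[
\P(\text{ratio}\ge\epsilon)\;\le\;\P(E^c)+8e^{c_1 d\log(2n+1)-c_2 n\epsilon^2},
\]
and with $d\asymp\log(2n+1)^{1/\gamma}$ the term $\P(E^c)\le nC(2n+1)^{-cK^\gamma}$ is a \emph{fixed} polynomial in $n$ that carries no $\epsilon$-decay. It therefore cannot be absorbed into $Ce^{C\log(2n+1)^{1+1/\gamma}-cn\epsilon^2}$ uniformly in $\epsilon$: for $\epsilon$ of order one and large $n$, the target right-hand side decays like $e^{-cn}$ (up to a sub-exponential prefactor), while your additive $\P(E^c)$ term decays only polynomially. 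The sentence ``A union bound now gives the claimed form'' is exactly where the argument breaks. Choosing $d$ to depend on $\epsilon$ does not repair this without destroying the $\log(2n+1)^{1+1/\gamma}$ shape of the exponent. The fix is precisely the paper's device: bound the expected shatter coefficient rather than the probability of $E$, so the $e^{-cn\epsilon^2}$ factor is never split off.
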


\subsection{Polynomial decay}

Lastly, we discuss the case where the eigenvalues $\{\lambda_j\}_{j\in \N}$ decay polynomially, as stated in the following assumption:
\begin{assumption}\label{ass:polyn_decay}
We have $\lambda_j\leq C j^{-\gamma}$ for all $j\geq 1$, where $C>0$ and $\gamma>1$.
\end{assumption}

Similarly to the above, we get the following two results:

\begin{theorem}[Concentration of $\overline{\mathcal{W}}_p$]\label{thm:conc_poly}
Under Assumptions \ref{ass:mu} and \ref{ass:polyn_decay}, we have 
\begin{align*}
\E\left[\overline{\mathcal{W}}_p(\mu_n, \mu)^p\right]\le   C \frac{\log(2n+1)^{\frac{p}{s}+\frac12} }{n^{\frac{1}{2}-\frac{1}{1+p\gamma}}} .
\end{align*}
The constant $C$ depends only on $p,s, \gamma, M_s(\mu)$.
\end{theorem}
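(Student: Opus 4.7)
The strategy is to project $\mu$ and $\mu_n$ onto a $d$-dimensional subspace of $\mathcal{H}$, apply the finite-dimensional bound of Theorem~\ref{thm:exp_fin} to these projections, control the two tail residuals via the moment condition of Assumption~\ref{ass:mu} and the eigenvalue decay of Assumption~\ref{ass:polyn_decay}, and finally balance these errors by choosing $d$ optimally in $n$. This mirrors the blueprint used for Theorem~\ref{thm:exp_exp}, but the polynomial (rather than exponential) decay of $\lambda_j$ forces a polynomial rather than logarithmic trade-off between dimension and sample size.

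Setting $V_d:=\mathrm{span}\{\sqrt{\lambda_j}\psi_j:1\le j\le d\}\subset\mathcal{H}$, let $x^{(d)}$ denote the orthogonal projection of $x$ onto $V_d$, and let $\mu^{(d)},\mu_n^{(d)}$ be the pushforward measures. The triangle inequality for $\overline{\mathcal{W}}_p$ together with $(a+b+c)^p\le 3^{p-1}(a^p+b^p+c^p)$ gives
\[
\E\bigl[\overline{\mathcal{W}}_p(\mu_n,\mu)^p\bigr]\le 3^{p-1}\Bigl(\E\bigl[\overline{\mathcal{W}}_p(\mu,\mu^{(d)})^p\bigr]+\E\bigl[\overline{\mathcal{W}}_p(\mu^{(d)},\mu_n^{(d)})^p\bigr]+\E\bigl[\overline{\mathcal{W}}_p(\mu_n^{(d)},\mu_n)^p\bigr]\Bigr).
\]
The middle term is the finite-dimensional contribution: $\mu^{(d)}$ and $\mu_n^{(d)}$ live on $V_d\cong\R^d$, and the supremum over unit $\theta\in\mathcal{H}$ in the definition of $\overline{\mathcal{W}}_p$ reduces to a supremum over unit $\theta\in V_d$ when both measures are supported on $V_d$. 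Hence Theorem~\ref{thm:exp_fin} applies and yields a bound of order $\log(2n+1)^{p/s+1/2}\sqrt{d/n}$, provided $M_s(\mu^{(d)})$ is controlled uniformly in $d$.

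For the two tail terms, I would use the identity coupling: $(X,X^{(d)})$ joins $\mu$ and $\mu^{(d)}$, and for any unit $\theta\in\mathcal{H}$, $|\langle X-X^{(d)},\theta\rangle_\mathcal{H}|\le\|X-X^{(d)}\|_\mathcal{H}$, so $\overline{\mathcal{W}}_p(\mu,\mu^{(d)})^p\le\E\|X-X^{(d)}\|_\mathcal{H}^p$. Writing
\[
\|X-X^{(d)}\|_\mathcal{H}^2=\sum_{j>d}\frac{\langle X,\psi_j\rangle_{L^2}^2}{\lambda_j}=\sum_{j>d}\lambda_j\cdot\frac{\langle X,\psi_j\rangle_{L^2}^2}{\lambda_j^2}\le\lambda_{d+1}\sum_{j\ge 1}\frac{\langle X,\psi_j\rangle_{L^2}^2}{\lambda_j^2},
\]
raising to the $p/2$ power, and applying Jensen's inequality (using $s/2>p$) together with Assumption~\ref{ass:mu} bounds this by $\lambda_{d+1}^{p/2}\widetilde M_s(\mu)^{p/s}$; Assumption~\ref{ass:polyn_decay} then gives $\le C d^{-p\gamma/2}$. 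The empirical analogue is identical by Fubini.

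Combining the three estimates produces $\E[\overline{\mathcal{W}}_p(\mu_n,\mu)^p]\le C\log(2n+1)^{p/s+1/2}\sqrt{d/n}+C d^{-p\gamma/2}$, and the choice $d\asymp n^{1/(1+p\gamma)}$ balances the two terms and yields the stated rate (in fact producing the slightly sharper exponent $p\gamma/(2(1+p\gamma))$ on $n^{-1}$). The main point requiring care is the uniform-in-$d$ control of the constant from Theorem~\ref{thm:exp_fin}, which depends on $\mu^{(d)}$ only through $M_s(\mu^{(d)})$; this is handled by the contraction bound $M_s(\mu^{(d)})\le M_s(\mu)\le \lambda_1^{s/2}\widetilde M_s(\mu)<\infty$ (the last step uses the same Jensen argument as for the tail, now with $d=0$). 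Rounding $d$ to an integer contributes only constant factors and can be absorbed into $C$.
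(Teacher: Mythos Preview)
Your argument is correct and in fact takes a more direct route than the paper. The paper does not apply Theorem~\ref{thm:exp_fin} directly; instead it first establishes a \emph{concentration} inequality by combining the triangle decomposition \eqref{eq:triangle} with the finite-dimensional concentration bound (Theorem~\ref{thm:conc_fin}) and the tail estimates of Lemmas~\ref{lem:fintoinf}, \ref{lem:fintoinf_prob}, \ref{lem:pol}, choosing $d=n^{1/(1+p\gamma)}$. Only afterwards is the concentration inequality integrated via the layer-cake formula (as in the proof of Theorem~\ref{thm:exp_fin}) to produce the expectation bound. You bypass this detour entirely: by taking expectations first and invoking Theorem~\ref{thm:exp_fin} on the projected measures (with the key observation that $M_s(\mu^{(d)})\le M_s(\mu)$ so the constant is uniform in $d$), you avoid the integration step and the bookkeeping with $\epsilon,\tilde\epsilon$. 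As you note, this even yields the sharper exponent $\tfrac12-\tfrac{1}{2(1+p\gamma)}$ rather than the stated $\tfrac12-\tfrac{1}{1+p\gamma}$. The price is that you do not obtain a tail bound along the way, which is the content the paper's route provides.
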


\begin{theorem}[Expectation of $\overline{\mathcal{W}}_p$] \label{thm:exp_poly}
Under Assumptions \ref{ass:mu} and \ref{ass:polyn_decay}, we have 
\begin{align*}
\E[\overline{\mathcal{W}}_p(\mu_n, \mu)^p]\le   C \frac{\log(2n+1)^{\frac{p}{s}} }{n^{\frac{1}{2}-\frac{1}{2p\gamma}}} .
\end{align*}
The constant $C$ depends only on $p,s, \gamma, M_s(\mu)$.
\end{theorem}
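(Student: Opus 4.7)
The plan is to reduce the infinite-dimensional problem to a finite-dimensional one by truncating each unit vector $\theta\in\H$ to its first $d$ coordinates with respect to the orthonormal basis $\{\sqrt{\lambda_j}\psi_j\}_{j\in\N}$ of $\H$. Let $P_d$ denote the orthogonal projection onto $V_d:=\mathrm{span}\{\sqrt{\lambda_j}\psi_j:1\le j\le d\}$. For any unit vector $\theta$, the triangle inequality for $\mathcal{W}_p$ combined with $(a+b+c)^p\le 3^{p-1}(a^p+b^p+c^p)$ gives
\[
\mathcal{W}_p(\mu_\theta,\mu_{n,\theta})^p \le 3^{p-1}\bigl[\mathcal{W}_p(\mu_{P_d\theta},\mu_{n,P_d\theta})^p + \mathcal{W}_p(\mu_\theta,\mu_{P_d\theta})^p + \mathcal{W}_p(\mu_{n,P_d\theta},\mu_{n,\theta})^p\bigr],
\]
and taking $\sup_{\|\theta\|_\H=1}$ reduces the problem to bounding a finite-dimensional max-sliced Wasserstein quantity on $V_d$ plus two tail terms.

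For the tail terms I use the identity coupling, which reduces the first to $\int |\langle x,(I-P_d)\theta\rangle_{\H}|^p\,\mu(dx)$. Self-adjointness of $P_d$, the bound $\|(I-P_d)\theta\|_{\H}\le 1$, and Cauchy--Schwarz give $|\langle x,(I-P_d)\theta\rangle_{\H}|\le \|(I-P_d)x\|_{\H}$, and monotonicity of the eigenvalues yields
\[
\|(I-P_d)x\|_{\H}^2 \;=\; \sum_{j>d}\frac{\langle x,\psi_j\rangle_{L^2}^2}{\lambda_j} \;\le\; \lambda_{d+1}\sum_{j\ge 1}\frac{\langle x,\psi_j\rangle_{L^2}^2}{\lambda_j^2} \;\le\; \lambda_{d+1}\,\widetilde{m}(x)^2,
\]
where $\widetilde{m}(x)^2:=\sum_{j\ge 1}\lambda_j^{-2}\langle x,\psi_j\rangle_{L^2}^2$. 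By H\"older's inequality and Assumption~\ref{ass:mu}, $\E[\widetilde{m}(X)^p]\le \widetilde{M}_s(\mu)^{p/s}<\infty$, so both tail terms are bounded (in expectation) by a constant multiple of $\lambda_{d+1}^{p/2}$. For the middle piece, the identity $\langle x,P_d\theta\rangle_{\H}=\langle P_dx,P_d\theta\rangle_{\H}$ shows that $\mu_{P_d\theta}$ depends only on $P_d\#\mu$, so this term is controlled by $\overline{\mathcal{W}}_p(P_d\#\mu,P_d\#\mu_n)$ on the $d$-dimensional space $V_d$. Since $\|P_dx\|_\H\le\sqrt{\lambda_1}\,\widetilde{m}(x)\in L^s(\mu)$, the $s$-th moment hypothesis of Theorem~\ref{thm:exp_fin} is satisfied; a direct application yields a bound of order $\log(n)^{p/s+1/2}\sqrt{d/n}$, and the slightly sharper $\log(n)^{p/s}$ factor claimed in the statement can be obtained by instead invoking the uniform ratio bound of Theorem~\ref{thm:unif_finite} on $V_d$ and converting it into a $\mathcal{W}_p$ estimate by splitting the one-dimensional range at $\pm R$ with $R\asymp(\log n)^{1/s}$ and using the $s$-th moment to control the tail.

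Putting everything together,
\[
\E[\overline{\mathcal{W}}_p(\mu_n,\mu)^p] \;\le\; C\bigl[\log(n)^{p/s}\sqrt{d/n}+\lambda_{d+1}^{p/2}\bigr],
\]
and Assumption~\ref{ass:polyn_decay} gives $\lambda_{d+1}^{p/2}\le Cd^{-p\gamma/2}$. Choosing $d=\lceil n^{1/(p\gamma)}\rceil$ makes the tail term at most $Cn^{-1/2}$, which is absorbed by the finite-dimensional term $\log(n)^{p/s}\,n^{-1/2+1/(2p\gamma)}$ (recall $p\gamma>1$), producing the claimed rate. The main obstacle is the log-exponent refinement described in the previous paragraph; structurally, the argument is identical to its exponential-decay counterpart Theorem~\ref{thm:exp_exp}, with the only essential difference being the choice of $d$ dictated by polynomial rather than exponential decay.
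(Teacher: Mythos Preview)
Your high-level strategy---project onto $V_d$, bound the tail via the eigenvalue decay and the weighted moment, bound the finite-dimensional piece via Theorem~\ref{thm:exp_fin}, then optimize over $d$---is the same scheme the paper uses (cf.\ \eqref{eq:triangle}, Lemmas~\ref{lem:fintoinf} and~\ref{lem:pol}). Projecting $\theta$ rather than $x$ is cosmetic; self-adjointness of $P_d$ makes the two equivalent.

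Where you genuinely deviate from the paper is in handling the empirical tail. You bound it directly in expectation,
\[
\E\Bigl[\int\|(I-P_d)x\|_\H^p\,\mu_n(dx)\Bigr]=\int\|(I-P_d)x\|_\H^p\,\mu(dx)\le C\lambda_{d+1}^{p/2},
\]
whereas the paper's proof routes through the concentration inequality of Theorem~\ref{thm:conc_poly}, where the empirical tail is controlled only via the Markov bound $\P(\overline{\mathcal{W}}_p(\mu_n^d,\mu_n)^p\ge\epsilon)\le C\epsilon^{-s/p}d^{-s\gamma/2}$ (Lemma~\ref{lem:fintoinf_prob}), and then integrates in $\epsilon$. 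That detour forces the paper's choice $d=n^{1/(1+p\gamma)}$ and produces the $n$-exponent $\tfrac12-\tfrac1{1+p\gamma}$; indeed, the restatement of Theorem~\ref{thm:exp_poly} in the proofs section carries this weaker exponent, not the one in the statement. Your direct approach permits $d\asymp n^{1/(p\gamma)}$ and recovers the sharper exponent $\tfrac12-\tfrac1{2p\gamma}$ that the statement actually claims. So on the $n$-rate, your argument is both simpler and strictly better than what the paper's written proof achieves.

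The one genuine gap is the logarithmic exponent. Theorem~\ref{thm:exp_fin} applied to $V_d$ gives $\log(2n+1)^{p/s+1/2}\sqrt{d/n}$, and your one-line sketch for removing the extra $\log^{1/2}$---``invoke the uniform ratio bound and truncate at $\pm R\asymp(\log n)^{1/s}$''---is not a proof. The conversion from ratio bounds to $\mathcal{W}_p^p$ via the decomposition \eqref{eq:42result} is exactly where that factor enters (through the choice of $\epsilon$ when integrating the $I_2$ tail), and you have not shown how to bypass it. To be fair, the paper's own proof does not achieve $\log^{p/s}$ either; this is a discrepancy between the statement and both proofs, not a defect particular to yours. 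What your argument rigorously delivers is $C\log(2n+1)^{p/s+1/2}\,n^{-1/2+1/(2p\gamma)}$.
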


As in Section \ref{sec:expo_dec}, we need to strengthen Assumptions \ref{ass:mu} and \ref{ass:polyn_decay} as follows in order to obtain uniform ratio bounds.

\begin{assumption}\label{ass:poly2}
For some $\gamma>1$ and all $d\geq 1$, we have $$\mu\Big(\sum_{j=d+1}^\infty \frac{1}{\lambda_j}  \langle x,\psi_j\rangle_{L^2}^2 >0 \Big)\leq Cd^{-\gamma}.$$ 
\end{assumption}

\begin{theorem}[Uniform ratio bounds] \label{thm:unif_poly}
Under Assumption \ref{ass:poly2}, we have
\begin{align*}
&\P\left(\sup_{(\mathbf{\theta},t)\in \H \times {\mathbb R}}\frac{|F_{\mathbf{\theta}}(t)-F_{\mathbf{\theta},n}(t)|}{\sqrt{F_\theta(t)\vee F_{\theta,n}(t)}}\ge \epsilon \right)\le e^{C \log(2n+1)n^{\frac{1}{\gamma}} -\frac{c n\epsilon^2}{64}},
\end{align*}    
for all $\epsilon>0$, where $c>0$ is an absolute constant and $C>0$ depends on $\gamma$.
\end{theorem}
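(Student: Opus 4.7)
My plan is to closely follow the proof of Theorem \ref{thm:unif_exp}, replacing the exponential tail bound from Assumption \ref{ass:exp2} by the polynomial estimate $\mu(B_d)\le C d^{-\gamma}$ from Assumption \ref{ass:poly2}, where $B_d := \{x\in\H: \sum_{j>d}\lambda_j^{-1}\langle x,\psi_j\rangle_{L^2}^2 > 0\}$. The overall strategy is a truncation/VC argument: for each unit $\theta\in\H$, approximate it by its orthogonal projection $\pi_d\theta$ onto $\mathrm{span}\{\sqrt{\lambda_j}\psi_j: j\le d\}$, apply the finite-dimensional uniform ratio bound of Theorem \ref{thm:unif_finite} to the resulting $(d+1)$-VC class of half-spaces, and absorb the error from $B_d$ via the tail estimate.

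Concretely, for each $(\theta,t)$ split
\begin{align*}
F_\theta(t)-F_{\theta,n}(t) \;=\; \Delta_1(\theta,t)+\Delta_2(\theta,t),
\end{align*}
where $\Delta_1$ and $\Delta_2$ are the $(\mu-\mu_n)$-masses of $\{\langle x,\theta\rangle\le t\}\cap B_d^c$ and of $\{\langle x,\theta\rangle\le t\}\cap B_d$, respectively. On $B_d^c$ one has $\langle x,\theta\rangle_\H = \langle x,\pi_d\theta\rangle_\H$, so the supremum of $|\Delta_1|/\sqrt{F_\theta\vee F_{\theta,n}}$ over all $(\theta,t)$ reduces (after rescaling $\pi_d\theta$ to unit norm in $\pi_d(\H)$ and absorbing the scalar into $t$) to the corresponding $d$-dimensional uniform ratio. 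Theorem \ref{thm:unif_finite} then bounds the probability this exceeds $\epsilon/2$ by $8\exp((d+1)\log(2n+1)-cn\epsilon^2)$. For $\Delta_2$, using $F_\theta\ge \mu(\{\langle x,\theta\rangle\le t\}\cap B_d)$ and the elementary inequality $a/\sqrt{a+b}\le\sqrt a$, one obtains $|\Delta_2|/\sqrt{F_\theta\vee F_{\theta,n}}\le \sqrt{\mu(B_d)}+\sqrt{\mu_n(B_d)}$.

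It remains to control $\mu_n(B_d)$ and to choose $d$. Since $n\mu_n(B_d)\sim\mathrm{Bin}(n,\mu(B_d))$, a Chernoff bound gives $\P(\mu_n(B_d)\ge \epsilon^2/16)\le \exp(-cn\epsilon^2)$ as soon as $\mu(B_d)$ is small compared to $\epsilon^2$. The natural choice $d = \lceil c n^{1/\gamma}\rceil$ yields $\mu(B_d)\le C/n$ by Assumption \ref{ass:poly2}, so that both $\sqrt{\mu(B_d)}$ and $\sqrt{\mu_n(B_d)}$ are at most $\epsilon/4$ on an event of probability at least $1-\exp(-cn\epsilon^2)$ (for $n\epsilon^2\gtrsim 1$). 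Substituting this $d$ into the complexity exponent gives $(d+1)\log(2n+1)\lesssim \log(2n+1)\,n^{1/\gamma}$, which matches the factor $C\log(2n+1)n^{1/\gamma}$ appearing in the theorem. A union bound over the two failure events produces the claimed inequality.

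The most delicate point is controlling $\Delta_2/\sqrt{F_\theta\vee F_{\theta,n}}$ when the denominator may itself be smaller than $\mu(B_d)$; the algebraic trick $a/\sqrt{a+b}\le\sqrt a$ together with the set inclusion $\{\langle x,\theta\rangle\le t\}\cap B_d\subseteq\{\langle x,\theta\rangle\le t\}$ resolves this and avoids degeneracy of the ratio. A secondary technical issue is aligning the Chernoff rate for $\mu_n(B_d)$ with the target exponential rate $\exp(-cn\epsilon^2)$, which forces the specific scaling $d\asymp n^{1/\gamma}$ and is precisely what makes the complexity factor in Theorem \ref{thm:unif_poly} polynomial in $n$, rather than logarithmic as in the exponential-decay regime.
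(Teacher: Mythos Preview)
Your argument is correct, but it takes a different route from the paper's proof (despite your claim to ``closely follow'' Theorem~\ref{thm:unif_exp}). The paper does not decompose the half-space $\{\langle x,\theta\rangle\le t\}$ into pieces at all; instead it applies the general bound \eqref{eq:ratio_general} of Lemma~\ref{lem:ratio} directly to the full class $\mathcal J$ and then bounds the \emph{expected shatter coefficient} $\E[\mathcal S_{\mathcal J}(X_{1:2n})]$ by conditioning on the number $k$ of sample points with $\Pd(X_j)\neq X_j$. On that event one has $\mathcal S_{\mathcal J}(X_{1:2n})\le 2^k(2n-k+1)^{d+1}$, and the probability of the event is at most $(Cnd^{-\gamma})^k$; summing the resulting geometric series and choosing $d=(2Cn)^{1/\gamma}$ makes the ratio $1/2$ and yields $\E[\mathcal S_{\mathcal J}(X_{1:2n})]\le (2n+1)^{Cn^{1/\gamma}}$.

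Your decomposition $\Delta_1+\Delta_2$ works too: the key observations that $F_\theta\vee F_{\theta,n}\ge \mu(A)\vee\mu_n(A)$ for $A=\{\langle x,\theta\rangle\le t\}\cap B_d^c$, and that this set coincides with $\{\langle x,\pi_d\theta\rangle\le t\}\cap B_d^c$, are both correct, and the resulting class has VC dimension $\le d+1$. One small imprecision: you invoke Theorem~\ref{thm:unif_finite}, but that statement is for genuine cdfs; what you actually need is Lemma~\ref{lem:ratio} applied to the intersected class, which has the same shatter bound. Your $\Delta_2$ estimate and Chernoff step are fine, with the caveat that they require $n\epsilon^2$ to exceed an absolute constant---harmless, since otherwise the claimed bound is at least $1$. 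The paper's approach is a bit cleaner because it handles everything through a single shatter-coefficient computation and avoids the separate Chernoff step for $\mu_n(B_d)$; your approach, on the other hand, makes the source of the $n^{1/\gamma}$ complexity term more transparent at the level of the empirical process itself.
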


\section{Rates for $\overline{\mathcal{W}}_{p}(\mu_n,\mu)$ in $L^2$-spaces} \label{sec:lit_review}

We now detail the implications of our results on the rates of $\overline{\mathcal{W}}_p(\mu_n, \mu)$ for probability measures $\mu$ on the space $L^2(\Omega, m)$, where $(\Omega,m)$ is a measure space. Abusing notation, we still denote an orthonormal basis of $L^2(\Omega, m)$ by $\{\psi_j\}_{j\in\N}$. We now reformulate our assumption on the integrability of $\mu$. In fact, Assumption \ref{ass:mu} now reads:

\begin{assumption}\label{ass:lei}
There exists a decreasing sequence $\{\tau_j\}_{j\in \N}$ of non-negative numbers such that, for some $s>2p$, we have
$$\int \Big( \sum_{j=1}^\infty  \frac{\langle x, \psi_j\rangle_{L^2}^2}{\tau_j^2} \Big)^{\frac{s}{2}} \,\mu(dx)<\infty. $$
\end{assumption}

Here the sequence $(\tau_j)_{j\in \N}$ plays the role of  $(\lambda_j)_{j\in \N}$ and allows us to make a direct connection to \cite{lei2020convergence}: in particular, \cite[Section 4.2]{lei2020convergence} covers the exponential decay $\tau_j=C\exp(-cj^\gamma)$, while \cite[Section 4.1]{lei2020convergence} discusses the polynomial decay rates $\tau_j=Cj^{-\gamma}$. 

We obtain the following meta-result:

\begin{mtheorem}
In Theorems \ref{thm:conc_exp},  \ref{thm:exp_exp}, \ref{thm:conc_poly} and \ref{thm:exp_poly} replace Assumption \ref{ass:s_moment}  by Assumption \ref{ass:lei} and $\{\lambda_j\}_{j\in \N}$ by $\{\tau_j\}_{j\in \N}$. Then the results remain unchanged, where $\overline{\mathcal{W}}_p(\mu_n,\mu)$ is the max-sliced Wasserstein distance between $\mu_n$ and $\mu$ on $L^2(\Omega, m).$
\end{mtheorem}

As an example, Theorem \ref{thm:exp_exp} in $L^2(\Omega,m)$ space reads like this:

\begin{theorem}[Concentration of $\overline{\mathcal{W}}_p$]
Assume $\tau_j\le C\exp(-cj^\gamma)$ for all $j\in \N$ and constants $c, C, \gamma>0$.
Under Assumption \ref{ass:lei} for $\{\tau_j\}_{j\in \N}$, we have 
\begin{align*}
&\P\left( \overline{\mathcal{W}}_p(\mu_n, \mu)^p \ge c\log(2n+1)^{\frac{p}{s}+\frac{1}{2\gamma}} \left[\sqrt{\frac{1}{n}} +\left(2+\sqrt{M_{s}(\mu) \vee M_{s}(\mu_n)}\right) \epsilon \right] \right)\\
&\le Ce^{C\log(2n+1)^{1+\frac{1}{\gamma}}-\frac{n\epsilon^2}{64}}+ \frac{C}{n^{\frac{s}{2p}} \epsilon^2}.
\end{align*}
The constants $c,C$ depend only on $p,s,\gamma, \widetilde{M}_p(\mu)$.
\end{theorem}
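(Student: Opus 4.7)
The plan is to reduce this statement to Theorem \ref{thm:conc_exp} via an isometric embedding of $L^2(\Omega, m)$ into a suitably chosen auxiliary RKHS. Let $\mathcal{H}'$ be the RKHS (in the sense of Section \ref{sec:rkhs}) with the same orthonormal basis $\{\psi_j\}_{j\in \N}$ as $L^2(\Omega, m)$ but with eigenvalues $\lambda_j' := \tau_j^2$. Since $\tau_j \le C \exp(-c j^\gamma)$, the squared decay $\lambda_j' \le C^2 \exp(-2c j^\gamma)$ satisfies Assumption \ref{ass:exponential decay} for $\mathcal{H}'$ with the same exponent $\gamma$.

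Next, define $\phi : L^2(\Omega,m) \to \mathcal{H}'$ by
\[
\phi(x) := \sum_{j=1}^{\infty} \tau_j \langle x, \psi_j\rangle_{L^2} \,\psi_j.
\]
A short computation using \eqref{eq:inner_prod} yields $\|\phi(x)\|_{\mathcal{H}'} = \|x\|_{L^2}$ and, crucially, $\langle \phi(x), \phi(\theta)\rangle_{\mathcal{H}'} = \langle x, \theta\rangle_{L^2}$ for all $x, \theta \in L^2(\Omega,m)$. Hence $\phi$ is an isometric bijection onto $\mathcal{H}'$ that preserves the inner product, so the map $\theta \mapsto \phi(\theta)$ identifies the respective unit spheres and the associated one-dimensional projections agree pointwise.

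Letting $\tilde \mu := \phi_{\#} \mu$ and $\tilde \mu_n := \phi_{\#} \mu_n$, this identification gives $\overline{\mathcal{W}}_p(\mu_n, \mu) = \overline{\mathcal{W}}_p(\tilde \mu_n, \tilde \mu)$, where the two sides are computed in $L^2(\Omega,m)$ and $\mathcal{H}'$ respectively. Substituting $\langle \phi(x), \psi_j\rangle_{L^2} = \tau_j \langle x, \psi_j\rangle_{L^2}$ together with $\lambda_j' = \tau_j^2$ reduces the weighted moment from Assumption \ref{ass:mu} for $\tilde \mu$ on $\mathcal{H}'$ to $\int \bigl(\sum_j \langle x, \psi_j\rangle_{L^2}^2 / \tau_j^2\bigr)^{s/2}\mu(dx)$, which is finite by Assumption \ref{ass:lei}. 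In the same way $M_s(\tilde \mu) = M_s(\mu)$ and $M_s(\tilde \mu_n) = M_s(\mu_n)$ because $\phi$ preserves norms, so all hypotheses of Theorem \ref{thm:conc_exp} are met by $\tilde \mu$ on $\mathcal{H}'$, and applying that theorem delivers the claimed concentration inequality.

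The key insight that makes this reduction work is choosing $\lambda_j' = \tau_j^2$ rather than $\lambda_j' = \tau_j$: the pushforward through $\phi$ picks up an additional factor of $\tau_j^2$ in the weighted moment, so this choice precisely aligns Assumption \ref{ass:lei} for $\mu$ on $L^2$ with Assumption \ref{ass:mu} for $\tilde \mu$ on $\mathcal{H}'$. Once this alignment is in place, the remainder of the argument is bookkeeping and no further probabilistic estimates are needed.
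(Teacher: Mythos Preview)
Your approach is correct and takes a genuinely different route than the paper's own proof. The paper does not invoke Theorem~\ref{thm:conc_exp} as a black box; instead it re-derives the key projection estimate directly in the $L^2$ setting, showing from Assumption~\ref{ass:lei} that
\[
\int \|\Pd(x) - x\|_{L^2}^q\, \mu(dx)
\;\le\; \tau_d^{\,q} \int \Big(\sum_{j>d} \frac{\langle x, \psi_j\rangle_{L^2}^2}{\tau_j^2}\Big)^{q/2} \mu(dx)
\;\le\; C\exp(-cqd^\gamma),
\]
and then remarks that the remainder of the argument for Theorem~\ref{thm:conc_exp} --- the triangle decomposition~\eqref{eq:triangle}, the finite-dimensional bound~\eqref{eq:finite_d}, and the choice $d \asymp (\log n)^{1/\gamma}$ --- carries over verbatim with $\|\cdot\|_{L^2}$ in place of $\|\cdot\|_{\mathcal{H}}$.

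Your isometric embedding with $\lambda_j' = \tau_j^2$ is an elegant black-box reduction: once the isometry is checked, no estimate needs to be redone. The paper's approach, by contrast, repeats two lines of algebra but avoids constructing an auxiliary space and verifying that it fits the framework of Section~\ref{sec:rkhs}. On that last point there is one small gap in your writeup: Theorem~\ref{thm:conc_exp} is formally stated for an RKHS in the sense of Section~\ref{sec:rkhs}, which presupposes a continuous kernel, and there is no reason the candidate kernel $K'(z,z') = \sum_j \tau_j^2\,\psi_j(z)\psi_j(z')$ is continuous. To close this you should either observe that the proof of Theorem~\ref{thm:conc_exp} uses only the weighted inner-product structure~\eqref{eq:inner_prod} and never the kernel itself, or appeal to the fact that any separable infinite-dimensional Hilbert space is isometric to any other, so that the RKHS hypothesis is cosmetic.
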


\section{Numerical examples}\label{sec:numerical}

In this section, we carry out a number of numerical tests of our main results about the rate of decay of $\E[\overline{\mathcal{W}}_p(\mu_n,\mu)]$, see Theorems \ref{thm:exp_fin}, \ref{thm:exp_exp} and \ref{thm:exp_poly}. We sometimes compute $\E[\overline{\mathcal{W}}_p(\mu_n,\nu_n)]$ instead of $\E[\overline{\mathcal{W}}_p(\mu_n,\mu)]$, where $\mu_n$ and $\nu_n$ are two empirical measures of independent samples from $\mu=\nu$. By the triangle inequality, we estimate 
\begin{align}\label{eq:rates_triangle}
\E[\overline{\mathcal{W}}_p(\mu_n,\nu_n)] &\le 
\E[\overline{\mathcal{W}}_p(\mu_n,\mu)] +\E[\overline{\mathcal{W}}_p(\nu_n,\nu)]=2 \E[\overline{\mathcal{W}}_p(\mu_n,\mu)].
\end{align}
In consequence, our results also yield upper bounds for $\E[\overline{\mathcal{W}}_p(\mu_n,\nu_n)]$. Computing rates for $\overline{\mathcal{W}}_p(\mu_n,\nu_n)$ instead of $\overline{\mathcal{W}}_p(\mu_n,\mu)$ is a standard technique to speed up computations, which is e.g.\ applied in \cite[Figure 1]{lin2021projection}.
The Python implementation of our experiments can be found at \href{https://github.com/Han-Ruiyu/rkhs-p-max-sw}{https://github.com/Han-Ruiyu/rkhs-p-max-sw}. Below, we first discuss convergence rates for measures $\mu$ on a finite dimensional space $\mathcal{H}$ and then consider $\mu$ on an RKHS with a Gaussian kernel.

\subsection{The finite dimensional case}

If $\text{dim}(\mathcal{H})<\infty$, then Theorem \ref{thm:exp_fin} yields the upper bound
\begin{align}\label{eq:cindy}
\E[\overline{\mathcal{W}}_2(\mu_n,\mu)]\le \E[\overline{\mathcal{W}}_2(\mu_n,\mu)^2]^{\frac{1}{2}} \lesssim \Big(\frac{d}{n}\Big)^{\frac14}.
\end{align}
We study this upper bound empirically for two distributions:

\begin{itemize}
    \item $\mu=\mathcal{N}(a,\Sigma)$ is a Gaussian distribution, where $a\in\mathbb R^d$ denotes the mean and $\Sigma\in\mathbb R^{d\times d}$ is the covariance matrix. It is well known that Gaussian distributions have moments of all orders; thus, $\mu$ satisfies Assumption \ref{ass:s_moment} for any $p\geq 1$. In the case $\mu=\mathcal{N}(0, I)$, we e.g.\ have
    \begin{align*}
    \int |x|^s\,\mu(dx)\le  d^{s} \Big( 2^{s/2}\frac{\Gamma((s+1)/2)}{\sqrt{\pi}} \Big),
    \end{align*}
    where $\Gamma(z)=\int_0^\infty t^{z-1}e^{-t}\,dt$ denotes the Gamma function.
    We plot a $n \mapsto \E[\overline{\mathcal{W}}_2(\mu_n,\mu)]$ for $\mu=\mathcal{N}(0,I)$ in Figure \ref{fig:1} and for $\mu=\mathcal{N}(1, \hat{\Sigma})$ in Figure \ref{fig:2}, where 
    $$
    1 = (1, \dots, 1)^T \quad \text{and} \quad \hat{\Sigma} = \begin{bmatrix}
    1 & 1/2 & 1/2 &\dots &1/2\\
    1/2 & 1 & 1/2 &\dots &1/2\\
    1/2 & 1/2 & 1 &\dots &1/2\\
    \vdots &\vdots &\vdots &\ddots &\vdots\\
    1/2 & 1/2 & 1/2 &\dots & 1
    \end{bmatrix}.
    $$
    In both cases, we average our estimates over $100$ Monte Carlo runs and plot the results for dimensions $d=2,4,8$ and compare to \eqref{eq:cindy}.

    \item $\mu=\text{Pareto}(a) \otimes \cdots\otimes \text{Pareto}(a),$ where $\otimes$ denotes the product measure and $\text{Pareto}(a)$ denotes the one-dimensional Pareto distribution with shape parameter $a\in (0,\infty)$ and cdf $F(x)=1-\frac{1}{x^a}$ for all $x \ge 0.$ By direct computation, we have 
    \begin{align*}
    \int |x|^s\,\mu(dx)\le d^s \Big(\frac{a}{a-s}\Big) \quad \text{ for }s\in (0,a) \quad \text{and} \quad 
    \int |x|^s\,\mu(dx)=\infty \quad \text{ for }s\geq a.
    \end{align*}
    This implies that $\mu$ does not satisfy the Bernstein tail condition
    \begin{align*}
    \int |x|^s\,\mu(dx) \leq \frac{1}{2}\sigma^2 s! V^{s-2} \quad \text{ for all }s\geq 2,
    \end{align*}
    where $V,\sigma >0$,
    but it satisfies Assumption \ref{ass:s_moment} for $p<a/2$.
    Recalling \eqref{eq:rates_triangle} we plot $n \mapsto \E[\overline{\mathcal{W}}_2(\mu_n,\nu_n)]$ for $\mu= \nu=\text{Pareto}(8) \otimes \cdots\otimes \text{Pareto}(8)$ in Figure \ref{fig:3}.
    We average our estimates over $200$ Monte Carlo runs and plot the results for dimensions $d=2,4,8$.
 \end{itemize}

\begin{figure}[htbp!] \label{fig:1}
    \centering
    \includegraphics[width=7cm]{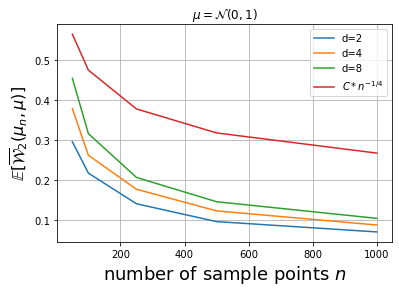}
    \includegraphics[width=7cm]{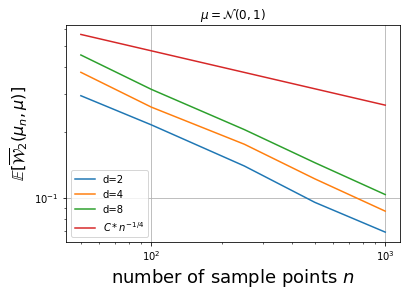}
    \caption{Plot and log-log plot of $n \mapsto \E[\overline{\mathcal{W}}_2(\mu_n,\mu)]$ where $\mu=\mathcal{N}(0,I_d)$, $d=2,4,8$. Results are averaged over $100$ Monte-Carlo runs.}
    \label{fig:1}
\end{figure}

\begin{figure}[htbp!]
    \centering
    \includegraphics[width=7cm]{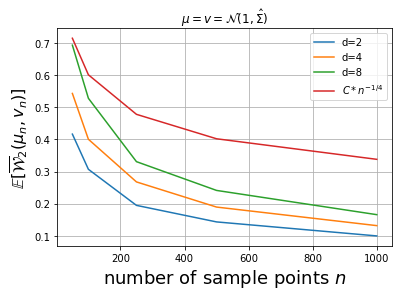}
    \includegraphics[width=7cm]{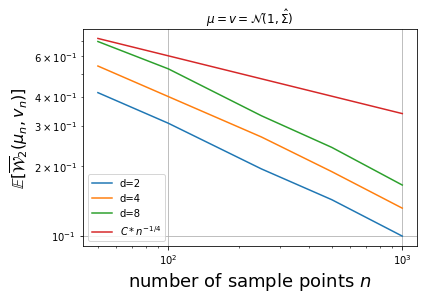}
    \caption{Plot and log-log plot of $n \mapsto \E[\overline{\mathcal{W}}_2(\mu_n,\nu_n)]$ where $\mu=\nu=\mathcal{N}(1,\hat{\Sigma})$. Results are averaged over $100$ Monte-Carlo runs.}
    \label{fig:2}
\end{figure}

\begin{figure}[htbp!]
    \centering
    \includegraphics[width=7cm]{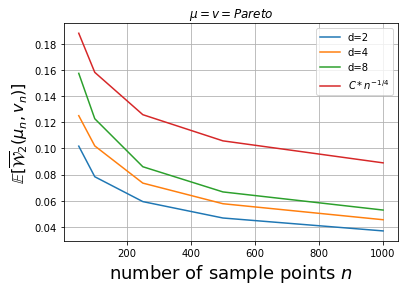}
    \includegraphics[width=7cm]{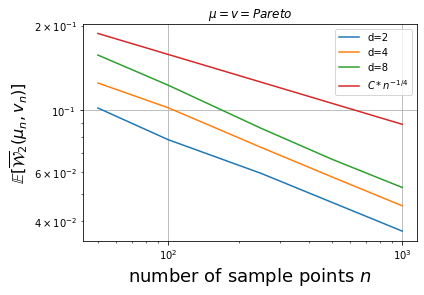}
    \caption{Plot and log-log plot of $n \mapsto \E[\overline{\mathcal{W}}_2(\mu_n,\nu_n)]$ where $\mu=\nu=\text{Pareto}(8) \otimes \cdots\otimes \text{Pareto}(8)$. Results are averaged over $200$ Monte-Carlo runs.}
    \label{fig:3}
\end{figure}

\subsection{The infinite dimensional case}
We now consider the case $\text{dim}(\mathcal{H})=\infty$. The following proposition can be found in \cite{williams2006gaussian}. For the convenience of the reader we provide a proof in Section \ref{sec:proof_5}.

\begin{proposition}\label{prop: gaussian_kernel_eigenfunctions}
Consider $\Omega=\R$ and $m= \mathcal{N}(0,\sigma^2)$ along with the Gaussian kernel $$K(z,z')=\exp(-(z-z')^2/2w^2),$$ for some $w>0.$ Then the eigenvalues $\{\lambda_j\}_{j\in \mathbb N}$ and eigenfunctions $\{\psi_j\}_{j\in \mathbb N}$ of $T_{K}:L^2(\mathbb R, m)\to L^2(\mathbb R, m)$ as defined in Section \ref{sec:rkhs} are given by
    \begin{equation}
\begin{split}
     \lambda_j &=  \sqrt{\frac{2a}{a+b+c}} \left( \frac{b}{a+b+c}\right)^j, \\
        \psi_j(z) &=\frac{1}{\sqrt{ \sqrt{\frac{a}{c}}  2^j j! }}\exp\left( -(c-a)z^2\right)H_j( \sqrt{2c}z), \label{eq: def_psi_j}
            \end{split}
    \end{equation}
    where
    \begin{equation}\label{eq: def_a_b_c}
        a=\frac{1}{4\sigma^2},\quad b=\frac{1}{2w^2},\quad c=\sqrt{a^2+2ab},
    \end{equation}
    and $H_j$ is the \emph{$j$-th order Hermite polynomial}, i.e.\ $$H_j(z)=(-1)^{j}e^{z^2}\frac{\partial^j}{\partial z^j}e^{-z^2},$$ for all $j\in \N$.
Moreover, $\{\psi_j\}_{j\in\N}$ form a complete orthonormal basis of $L^2(\mathbb R, m)$.
\end{proposition}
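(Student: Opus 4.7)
The plan is to verify the eigenvalue equation $T_K\psi_j = \lambda_j \psi_j$ by direct computation using the Hermite generating function, and then deduce orthonormality and completeness of $\{\psi_j\}_{j\in\N}$ from classical properties of Hermite polynomials.

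The first step is to reduce the eigenvalue equation to a single Gaussian integral via the identity $\sum_{j=0}^\infty (s^j/j!)\, H_j(u) = e^{2su - s^2}$. I would multiply the desired identity $T_K\psi_j = \lambda_j \psi_j$ by $s^j/j!$ and sum over $j$ (which is legal by dominated convergence once one checks that the Gaussian factor $e^{-(c-a)z'^2}$ dominates the polynomial growth of $H_j$ for $|s|$ bounded). The left hand side then becomes, up to the normalization constant in the definition of $\psi_j$,
$$\sqrt{2a/\pi}\int_\R e^{-b(z-z')^2} e^{-(c-a)z'^2} e^{2s\sqrt{2c}\,z' - s^2}\, dz'.$$
This Gaussian integral I would evaluate by completing the square in $z'$. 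The crucial algebraic input is the quadratic identity $c^2 - a^2 = 2ab$, which is precisely the definition of $c$ in \eqref{eq: def_a_b_c}: this is what forces the residual exponent in $z$ to be exactly $-(c-a)z^2$, matching the Gaussian profile of $\psi_j$.

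After the Gaussian is evaluated, the expression factors as $\sqrt{2a/(a+b+c)}\cdot e^{-(c-a)z^2} \cdot \exp\!\bigl(2s\sqrt{2c}\,z\cdot b/(a+b+c) - s^2\, b/(a+b+c)\bigr)$. Rewriting the $s$-dependent exponential by substituting $\tilde s = s\sqrt{b/(a+b+c)}$ reintroduces the Hermite generating function, now evaluated at $\sqrt{2c}\,z$. Matching coefficients of $s^j$ (both sides are entire in $s$, so this is justified) produces exactly the claimed formulas for $\lambda_j$ and $\psi_j$.

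It remains to verify orthonormality and completeness of $\{\psi_j\}$ in $L^2(\R, m)$. For orthonormality, the change of variables $u = \sqrt{2c}\,z$ reduces $\int \psi_j \psi_k\, dm$ to the classical Hermite orthogonality relation $\int H_j(u) H_k(u) e^{-u^2}\,du = \sqrt{\pi}\,2^j j!\,\delta_{jk}$, and the normalization factor $(\sqrt{a/c}\,2^j j!)^{-1/2}$ in the definition of $\psi_j$ is chosen precisely to cancel the leftover constants. Completeness follows from density of polynomials in $L^2(\R, m)$, a standard fact for Gaussian weights (the moment problem is determinate). The main obstacle will be the algebraic bookkeeping in the Gaussian integral: making sure the completed-square exponent produces exactly $-(c-a)z^2$ and that the remaining $s$-dependence is proportional to the Hermite generating function with the right rescaling, so that the coefficient comparison yields the advertised $\lambda_j$. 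All of this hinges on the single quadratic relation $c^2 = a^2 + 2ab$; without it the computation would not close.
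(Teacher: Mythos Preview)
Your proposal is correct and takes a genuinely different route from the paper. The paper fixes $j$ and appeals to the tabulated integral identity
\[
\int_{\R} e^{-(z-z')^2} H_j(\alpha z)\,dz = \sqrt{\pi}\,(1-\alpha^2)^{j/2} H_j\!\left(\frac{\alpha z'}{\sqrt{1-\alpha^2}}\right)
\]
from Gradshteyn--Ryzhik, then uses the algebraic relations $b(a+c)/(a+b+c)=c-a$ and $\sqrt{(a+b-c)/(a+b+c)}=b/(a+b+c)$ (both consequences of $c^2=a^2+2ab$) to identify the result with $\lambda_j\widetilde\psi_j$. You instead sum over $j$ against $s^j/j!$ via the Hermite generating function, reduce everything to one explicit Gaussian integral, and then read off all $j$ simultaneously by matching power-series coefficients. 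Your approach is more self-contained (no table lookup), while the paper's is more direct for a single fixed $j$; both rest on the same quadratic identity $c^2=a^2+2ab$ and the same change of variables for orthonormality and completeness.

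One small slip to fix when you write this up: after completing the square, the $s^2$ coefficient in the exponent is $-(a+b-c)/(a+b+c)=-b^2/(a+b+c)^2$, not $-b/(a+b+c)$, so the correct substitution is $\tilde s = s\cdot b/(a+b+c)$ rather than $\tilde s = s\sqrt{b/(a+b+c)}$. With that correction the coefficient of $s^j$ delivers exactly $(b/(a+b+c))^j$, matching $\lambda_j$.
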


We now look at a specific example.

\begin{lemma}  \label{lem: rkhs_kernel_embed_eg}Let $m=\mathcal{N}(0,\sigma^2)$ and $K(z,z')=\exp(-(z-z')^2/(2w)^2)$. 
\begin{enumerate}
\item Define
\begin{align*}
\kappa:=  \frac{b}{a} =\frac{2\sigma^2}{w^2},
\end{align*}
and assume $\kappa\geq 4$.
Then the induced RKHS satisfies Assumption \ref{ass:exponential decay}.
\item Let $\tilde \mu=\mathcal{N}(0,\eta^2)$ and let $\phi:\mathbb R\to L^2(\Omega,m)$ be the feature map
\begin{align*}
\phi(z)=\sum\limits_{j=1}^{\infty}\sqrt{\lambda_j}\psi_j(z)\psi_j,
\end{align*} 
defined in Section \ref{sec:rkhs}. Set $\mu=\phi_{\#}\tilde \mu.$
Then $\mu$ satisfies Assumption \ref{ass:mu} for $s< 2\sigma^2/\eta^2$.
\end{enumerate}
\end{lemma}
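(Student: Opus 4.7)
The plan is to handle parts (1) and (2) separately. For part~(1), I apply Proposition~\ref{prop: gaussian_kernel_eigenfunctions} directly: writing $A := a+b+c$ and $\rho := b/A$, the eigenvalues $\lambda_j = \sqrt{2a/A}\,\rho^j$ form a geometric sequence, and $\rho \in (0,1)$ because $a, c > 0$. Hence $\lambda_j \le C\exp(-c_0 j)$ with $c_0 = -\log \rho > 0$, which is exactly Assumption~\ref{ass:exponential decay} with $\gamma = 1$. The hypothesis $\kappa \ge 4$ further gives the quantitative estimate $c = a\sqrt{1+2\kappa} \ge 3a$, i.e.\ $c - a \ge 2a$; this extra margin on the exponent of $\psi_j$ is what I will exploit in part~(2).

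For part~(2), I first reduce to a one-dimensional integral. Since $\mu = \phi_\#\tilde\mu$ and $\langle \phi(z), \psi_j \rangle_{L^2} = \sqrt{\lambda_j}\,\psi_j(z)$ by orthonormality of $\{\psi_j\}$ in $L^2(\Omega,m)$, the change-of-variables formula yields
\[
\widetilde{M}_s(\mu) = \int_\R \Bigl(\sum_{j=1}^\infty \frac{\psi_j(z)^2}{\lambda_j}\Bigr)^{s/2} \tilde\mu(dz).
\]
Plugging in the explicit forms of $\psi_j$ and $\lambda_j$ from Proposition~\ref{prop: gaussian_kernel_eigenfunctions}, the inner sum is, up to a constant, $e^{-2(c-a)z^2}\sum_{j\ge 0} H_j(\sqrt{2c}\,z)^2\,\rho^{-j}/(2^j j!)$. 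My plan is to control this by combining Mehler's identity
\[
\sum_{j=0}^\infty \frac{H_j(x)^2}{2^j j!}\,t^j = \frac{1}{\sqrt{1-t^2}}\exp\Bigl(\frac{2x^2 t}{1+t}\Bigr), \quad |t|<1,
\]
after an appropriate rearrangement, with the Plancherel–Rotach asymptotic $H_j(x)^2/(2^j j!) \lesssim j^{-1/2} e^{x^2}$. The margin $c - a \ge 2a$ from part~(1) guarantees that the Gaussian decay $e^{-2(c-a)z^2}$ absorbs the growth $e^{2cz^2}$ coming from the Hermite factor, so that in the end the inner sum is bounded by $\exp(\alpha z^2)$ with $\alpha \le 2a = 1/(2\sigma^2)$.

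With this closed-form bound in hand, the moment collapses to a standard Gaussian integral,
\[
\widetilde{M}_s(\mu) \lesssim \int_\R \exp\Bigl(\frac{s\alpha}{2}z^2 - \frac{z^2}{2\eta^2}\Bigr)\,dz,
\]
which is finite precisely when $s\alpha/2 < 1/(2\eta^2)$. Substituting the critical value $\alpha = 2a = 1/(2\sigma^2)$ reproduces the stated threshold $s < 2\sigma^2/\eta^2$. The main obstacle is the Hermite-series step in the middle: Mehler's formula fails directly for the parameter $t = \rho^{-1} > 1$ that naturally arises, so one cannot just plug in. The cleanest route is a termwise bound combining the Gaussian factor $e^{-2(c-a)z^2}$ with Plancherel–Rotach, and calibrating constants so that the effective exponent $\alpha$ attains the sharp value $2a$ is delicate; this is precisely the point at which the quantitative assumption $\kappa \ge 4$ is required.
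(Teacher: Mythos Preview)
Part~(1) is fine and matches the paper's argument.

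For Part~(2), your reduction to $\widetilde{M}_s(\mu)=\int_{\R}\bigl(\sum_j \psi_j(z)^2/\lambda_j\bigr)^{s/2}\,\tilde\mu(dz)$ is correct and is exactly how the paper starts. The gap is in the next step. You propose to bound the inner sum $\sum_j \psi_j(z)^2/\lambda_j$ pointwise by $\exp(\alpha z^2)$ with $\alpha\le 2a$. But this sum, written as $C\,e^{-2(c-a)z^2}\sum_{j\ge 0} H_j(\sqrt{2c}\,z)^2\,\rho^{-j}/(2^j j!)$ with $\rho^{-1}>1$, is a power series in $\rho^{-1}$ whose coefficients, by the very Plancherel--Rotach bound you invoke, satisfy $H_j(x)^2/(2^j j!)\asymp j^{-1/2}e^{x^2}$. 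The $j$th term therefore behaves like $\rho^{-j}j^{-1/2}e^{2az^2}$: the exponent in $z$ is indeed $2a$, but the sum over $j$ is $\sum_j \rho^{-j}j^{-1/2}=\infty$. The Gaussian prefactor $e^{-2(c-a)z^2}$ only affects the $z$-dependence, not the summability in $j$, and the hypothesis $\kappa\ge 4$ merely caps $\rho^{-1}$ at $2$, which does not help. So the ``termwise Plancherel--Rotach combined with the Gaussian factor'' route you outline cannot close.

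The paper proceeds differently: it applies Jensen's inequality \emph{before} any Hermite estimate. Writing $\sum_j \psi_j^2/\lambda_j=\sum_j \lambda_j(\psi_j/\lambda_j)^2$ and using convexity of $x\mapsto x^{s/2}$ with the summable weights $\lambda_j$ gives
\[
\Bigl(\sum_j \psi_j(z)^2/\lambda_j\Bigr)^{s/2}\ \le\ C\sum_j \frac{|\psi_j(z)|^s}{\lambda_j^{\,s-1}}.
\]
After Fubini, the sum over $j$ now carries the factor $(j!)^{-s/2}$ from the normalization of $\psi_j^{\,s}$, and it is this super-exponential decay (together with the lower bound $\lambda_j\ge c\,2^{-j}$ coming from $\kappa\ge 4$, which the paper uses to control $\lambda_j^{-(s-1)}$) that is meant to beat the exponential growth. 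The $z$-integral then reduces to $\int e^{saz^2-z^2/(2\eta^2)}\,dz$, finite exactly when $s<2\sigma^2/\eta^2$. The missing idea in your plan is precisely this Jensen step, which restructures the sum so that a factorial---not merely a polynomial $j^{-1/2}$---is available to fight the geometric growth in $j$.
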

\begin{figure}[htbp!]
    \centering
    \includegraphics[width=9cm]{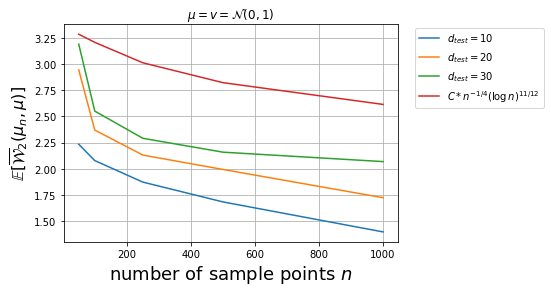}
    \includegraphics[width=9cm]{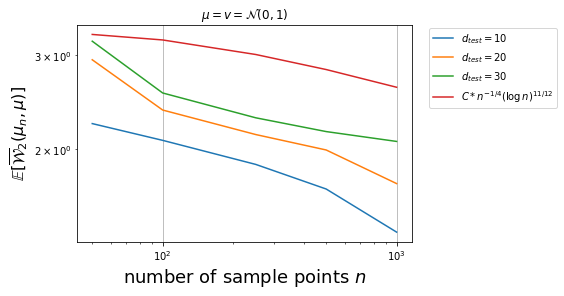}
    \caption{Plot and log-log plot of $n \mapsto \E[\overline{\mathcal{W}}_2(\mu_n,\nu_n)]$ where $\mu=\nu=\mathcal{N}(0,1)$. Results are averaged over $200$ Monte-Carlo runs.}
    \label{fig:rkhs_Gaussian}
\end{figure}

Take $m=\mathcal{N}(0,4)$ and $K(z,z')=\exp(-{(z-z')^2}/{4})$ and
$\mu=\mathcal{N}(0,1)$, so that $\sigma^2=4, w=1$ and $\eta^2=1$. Lemma \ref{lem: rkhs_kernel_embed_eg} is satisfied with $s=6$. According to Theorem \ref{thm:exp_exp} we then expect $\E \left[ \overline{\mathcal W}_2(\mu_n,\mu)\right]\lesssim n^{-1/4}\log(n)^{11/12}$.

In our numerical experiments we compute the truncated feature maps
\begin{equation}
\phi^{\text{test}}(z)=\sum\limits_{i=1}^{d_{\text{test}}}\sqrt{\lambda_j}\psi_j(z) \psi_j
\end{equation}
for $d_{\text{test}}\in\N.$
Figures \ref{fig:rkhs_Gaussian} shows our results for $d_{\text{test}}=10,20, 30.$ Similarly to the finite dimensional case we average the estimates of $\E [ \overline{\mathcal W}_2(\mu_n,\mu)]$ over 200 Monte Carlo runs.

\section{Proofs}\label{sec:proofs}

\subsection{An auxiliary result from empirical process theory}

Let us define a set $\mathcal{J}$ that contains the collection of halfspace functions in $\mathcal{H}$:
\begin{align*}
\mathcal{J}=\left\{\mathds{1}_{\left\{\langle x,\theta \rangle_{\H}\leq t \right\}} \, :\: ( \mathbf{\theta},t)\in\H\times\mathbb R \right\}.
\end{align*}   
Furthermore, for $n\in \N$, denoting $x_{1:n}:=(x_1, \dots, x_{n})\in (\R^d)^{n}$, we define 
\begin{align*}
\mathcal{S}_{\mathcal{J}}(x_{1:n}) := \text{card} \left(\left\{ (f(x_1), \dots, f(x_n)): f\in \mathcal{J}\right\}\right) \in \{1, 2, \ldots, 2^n\},
\end{align*}
and  the \emph{shatter coefficient} of $\mathcal{J}$ as
\begin{align*}
\mathcal{S}_{\mathcal{J}}(n) := \max_{x_{1:n}\in  (\R^d)^{n}}    \mathcal{S}_{\mathcal{J}}(x_{1:n}).
\end{align*}

Recall that $F_\theta(\cdot)$ is the cdf of $\mu_{\theta}$ (in other words, of $\langle x, \theta \rangle_{\mathcal{H}}$ under $\mu$) and $F_{\theta,n}(\cdot)$ is the cdf of $\langle x, \theta \rangle_{\mathcal{H}}$ under $\mu_n$.

The following lemma is a generalization of Theorem \ref{thm:unif_finite} and will be used in the proof of Theorems \ref{thm:unif_exp} and \ref{thm:unif_poly}.

\begin{lemma}\label{lem:ratio}
For any $\epsilon>0$, we have
\begin{align}\label{eq:ratio_general}
&\P\left(\sup_{(\mathbf{\theta},t)\in \H \times {\mathbb R}}\frac{|F_{\mathbf{\theta}}(t)-F_{\mathbf{\theta},n}(t)|}{\sqrt{F_\theta(t)\vee F_{\theta,n}(t)}}\ge \epsilon \right)  \le 8\E\left[\mathcal{S}_{\mathcal{J}}(X_{1:2n})\right] e^{-\frac{n\epsilon^2}{4}}.
\end{align}
If $\H$ is $d$-dimensional, then
\begin{align}\label{eq:vcdim}
\mathcal{S}_\mathcal{J}(n)\le (n+1)^{d+1}.
\end{align}
Therefore,
\begin{align}\label{eq:ratio}
&\P\left(\sup_{(\mathbf{\theta},t)\in \H \times {\mathbb R}}\frac{|F_{\mathbf{\theta}}(t)-F_{\mathbf{\theta},n}(t)|}{\sqrt{F_\theta(t)\vee F_{\theta,n}(t)}}\ge \epsilon \right)\le 8e^{(d+1)\log(2n+1)-\frac{n\epsilon^2}{4}}.
\end{align}
\end{lemma}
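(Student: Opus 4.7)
The plan is to prove \eqref{eq:ratio_general} via a classical Vapnik-Chervonenkis symmetrization for relative deviations, then specialize to finite dimensions using the VC dimension of halfspaces.

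For \eqref{eq:ratio_general}, write $Pf = F_\theta(t)$ and $P_n f = F_{\theta,n}(t)$ for $f = \mathds{1}_{\{\langle \cdot,\theta\rangle_\H \le t\}} \in \mathcal{J}$, and start from the elementary identity
\[
\frac{|Pf - P_n f|}{\sqrt{Pf \vee P_n f}} = \max\!\left(\frac{(Pf - P_n f)_+}{\sqrt{Pf}},\, \frac{(P_n f - Pf)_+}{\sqrt{P_n f}}\right),
\]
which reduces the two-sided ratio to two symmetric one-sided bounds. Each such bound is the classical Vapnik relative-deviation inequality
\[
\P\!\left(\sup_{f \in \mathcal{J}} \frac{Pf - P_n f}{\sqrt{Pf}} > \epsilon\right) \le 4\,\E[\mathcal{S}_\mathcal{J}(X_{1:2n})]\,e^{-n\epsilon^2/4},
\]
which I would establish in three standard steps: (i) ghost-sample symmetrization, introducing an independent copy $X'_1,\dots,X'_n$ with empirical $P'_n$ and replacing $Pf$ in the denominator by the symmetric average $(P_n f + P'_n f)/2$, splitting into the regimes $Pf \gtrsim 1/n$ and $Pf < 1/n$, with the latter handled by a direct Chernoff tail bound on Bernoulli variables; (ii) introducing i.i.d.\ Rademacher signs $\varepsilon_i$ to encode the swaps $X_i \leftrightarrow X'_i$ in the pooled sample $Z = X_{1:2n}$, which by exchangeability does not change the distribution; (iii) conditioning on $Z$, the supremum over $\mathcal{J}$ reduces to a maximum over at most $\mathcal{S}_\mathcal{J}(Z) \le \mathcal{S}_\mathcal{J}(X_{1:2n})$ distinct dichotomies, and for each fixed dichotomy Hoeffding's inequality applied to the Rademacher sum yields an exponential bound $2 e^{-n\epsilon^2/4}$, with the symmetrized denominator exactly compensating the Bernoulli variance. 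A union bound between the two one-sided estimates then produces the prefactor $8$ in \eqref{eq:ratio_general}.

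For \eqref{eq:vcdim}, specialize to $\H = \R^d$: the class $\mathcal{J}$ consists of indicators of closed halfspaces $\{\langle x,\theta\rangle \le t\}$, which is well-known to have VC dimension $d+1$ (the threshold $t$ contributes one extra degree of freedom over $\theta$). The Sauer-Shelah lemma then gives $\mathcal{S}_\mathcal{J}(n) \le \sum_{k=0}^{d+1}\binom{n}{k} \le (n+1)^{d+1}$. Substituting $\E[\mathcal{S}_\mathcal{J}(X_{1:2n})] \le (2n+1)^{d+1} = e^{(d+1)\log(2n+1)}$ into \eqref{eq:ratio_general} immediately yields \eqref{eq:ratio}.

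The main obstacle is the ghost-sample symmetrization with a data-dependent denominator $\sqrt{Pf}$: unlike the ordinary supremum bound on $\sup|Pf - P_n f|$, one cannot symmetrize $\sqrt{Pf}$ for free, and the analysis must be split according to whether $Pf \gtrsim 1/n$ to prevent the denominator from blowing up on rare events. This step is classical (going back to \cite{pollard1981limit} and earlier to Vapnik-Chervonenkis) and is the only technically delicate piece; the remainder is combinatorial bookkeeping via Sauer-Shelah.
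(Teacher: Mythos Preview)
Your proposal is correct and follows essentially the same route as the paper: split the two-sided ratio into the two one-sided relative deviations, bound each by $4\,\E[\mathcal{S}_\mathcal{J}(X_{1:2n})]e^{-n\epsilon^2/4}$, and then invoke the VC dimension $d+1$ of halfspaces together with Sauer--Shelah. The only difference is that the paper cites the one-sided relative-deviation bounds directly from \cite[Exercises~3.3--3.4]{devroye2001combinatorial} and the VC/Sauer facts from \cite{wainwright2019high,devroye2001combinatorial}, whereas you sketch the standard symmetrization proof of the former; both arrive at the same constants.
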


\begin{proof}
We first note the upper bound
\begin{equation}
\begin{split}
\label{lem6.1_eq1}
&\P\left(\sup_{(\mathbf{\theta},t)\in \H \times {\mathbb R}}\frac{|F_{\mathbf{\theta}}(t)-F_{\mathbf{\theta},n}(t)|}{\sqrt{F_\theta(t)\vee F_{\theta,n}(t)}}\ge \epsilon \right) \\
& \qquad \qquad \le  \P\left(\sup_{(\mathbf{\theta},t)\in \H \times {\mathbb R}}\frac{F_{\mathbf{\theta}}(t)-F_{\mathbf{\theta},n}(t)}{\sqrt{F_\theta(t)}}\ge \epsilon \right)  + \P\left(\sup_{(\mathbf{\theta},t)\in \H \times {\mathbb R}}\frac{F_{\mathbf{\theta},n}(t)-F_{\mathbf{\theta}}(t)}{\sqrt{F_{\theta,n}(t)}}\ge \epsilon \right).
\end{split}
\end{equation}
Considering the first term on the right hand side of \eqref{lem6.1_eq1}, the result in \cite[Ex.\ 3.3]{devroye2001combinatorial} states that for any measure $\mu$ with associated empirical measure $\mu_n$,
\begin{align}
\label{lem6.1_eq2}
\mathbb{P}\left(\sup _{f \in \mathcal{J}} \frac{\mu(f)-\mu_n(f)}{\sqrt{\mu(f)}}>\epsilon\right) \leq 4 \E\left[\mathcal{S}_{\mathcal{J}}(X_{1:2n})\right] e^{-\frac{n\epsilon^2}{4}},
\end{align}
and considering the second term on the right hand side of \eqref{lem6.1_eq1}, the result in \cite[Ex.\ 3.4]{devroye2001combinatorial} gives
\begin{align}
\label{lem6.1_eq3}
\mathbb{P}\left(\sup _{f \in \mathcal{J}} \frac{\mu_n(f)-\mu(f)}{\sqrt{\mu_n(f)}}>\epsilon\right) \leq 4 \E\left[\mathcal{S}_{\mathcal{J}}(X_{1:2n})\right] e^{-\frac{n\epsilon^2}{4}}.
\end{align}
Bounding \eqref{lem6.1_eq1} with \eqref{lem6.1_eq2} and \eqref{lem6.1_eq3}, we find the result \eqref{eq:ratio_general}, where we have used that for a given $(\mathbf{\theta},t)\in \H \times {\mathbb R}$, $F_\theta(t) = \mu(\langle \theta, x\rangle_{\mathcal{H}} \leq t) = \mu(f)$ for some $f \in \mathcal{J}$.

By \cite[Example 4.21]{wainwright2019high}, if $\mathcal{H}$ has dimension $d$, the VC-dimension of $\mathcal{J}$ is $(d+1)$. Thus, $\mathcal{S}_\mathcal{J}(n)\le (n+1)^{d+1}$, see e.g.\ \cite[Cor.\ 4.1]{devroye2001combinatorial}. Using this in \eqref{lem6.1_eq2}, we obtain
\begin{align*}
\mathbb{P}\left(\sup _{f \in \mathcal{J}} \frac{\mu(f)-\mu_n(f)}{\sqrt{\mu(f)}}>\epsilon\right) \leq 4 \E\left[\mathcal{S}_{\mathcal{J}}(X_{1:2n})\right] e^{-\frac{n\epsilon^2}{4}} \leq 4 \mathcal{S}_\mathcal{J}(2n)e^{-n\epsilon^2/4}
\le 4e^{(d+1) \log(2n+1)- \frac{n\epsilon^2}{4}}.
\end{align*}
A similar bound is attained for \eqref{lem6.1_eq3}, and plugging both into \eqref{lem6.1_eq1}, we have shown \eqref{eq:ratio}.
This concludes the proof.
\end{proof}

For ease of exposition we also record a corresponding result for the set 
\begin{align*}
\mathcal{I}= \left\{\mathds{1}_{\{\langle x,\theta \rangle_{\H}\leq t \}} \, :\: ( \mathbf{\theta},t)\in\H\times\mathbb R \right \}\cup \left\{\mathds{1}_{\{\langle x,\theta \rangle_{\H}> t\}} \, :\: ( \mathbf{\theta},t)\in\H\times\mathbb R \right\}.
\end{align*}

\begin{lemma}\label{lem:add}
If $\H$ is $d$-dimensional, then
\begin{align*}
\mathbb{P}\left(\sup _{f \in \mathcal{I}} \frac{|\mu(f)-\mu_n(f)|}{\sqrt{\mu(f)\vee \mu_n(f)}}>\epsilon\right) \leq 8e^{2(d+1)\log(2n+1)-\frac{n\epsilon^2}{4}}.
\end{align*}
\end{lemma}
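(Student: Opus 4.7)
The plan is to repeat the argument of Lemma \ref{lem:ratio} verbatim, but with the larger class $\mathcal{I}$ in place of $\mathcal{J}$, since the Devroye bounds \cite[Ex.\ 3.3, 3.4]{devroye2001combinatorial} hold for an arbitrary class of indicator functions. The only new ingredient is controlling the shatter coefficient $\mathcal{S}_{\mathcal{I}}(n)$, and this is essentially a one-line calculation.

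First I would observe that $\mathcal{I} = \mathcal{J}\cup \mathcal{J}^c$, where $\mathcal{J}^c := \{1-f : f\in \mathcal{J}\}$, because $\mathds{1}_{\{\langle x,\theta\rangle_{\H} > t\}} = 1 - \mathds{1}_{\{\langle x,\theta\rangle_{\H} \leq t\}}$. Since bit flipping is a bijection on $\{0,1\}^n$, for any $x_{1:n}$ we have $\mathcal{S}_{\mathcal{J}^c}(x_{1:n}) = \mathcal{S}_{\mathcal{J}}(x_{1:n})$, and therefore
\[
\mathcal{S}_{\mathcal{I}}(x_{1:n}) \le \mathcal{S}_{\mathcal{J}}(x_{1:n}) + \mathcal{S}_{\mathcal{J}^c}(x_{1:n}) = 2\mathcal{S}_{\mathcal{J}}(x_{1:n}) \le 2(n+1)^{d+1},
\]
where the last inequality uses the bound \eqref{eq:vcdim} from Lemma \ref{lem:ratio}.

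Next, exactly as in the proof of Lemma \ref{lem:ratio} (applied to $\mathcal{I}$), I would split
\[
\mathbb{P}\!\left(\sup_{f\in \mathcal{I}} \frac{|\mu(f)-\mu_n(f)|}{\sqrt{\mu(f)\vee \mu_n(f)}} > \epsilon\right) \le \mathbb{P}\!\left(\sup_{f\in \mathcal{I}} \frac{\mu(f)-\mu_n(f)}{\sqrt{\mu(f)}} > \epsilon\right) + \mathbb{P}\!\left(\sup_{f\in \mathcal{I}} \frac{\mu_n(f)-\mu(f)}{\sqrt{\mu_n(f)}} > \epsilon\right),
\]
and apply \cite[Ex.\ 3.3, 3.4]{devroye2001combinatorial} to each term. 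This yields an upper bound of $8\,\mathbb{E}[\mathcal{S}_{\mathcal{I}}(X_{1:2n})]\, e^{-n\epsilon^2/4} \le 16(2n+1)^{d+1} e^{-n\epsilon^2/4}$.

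Finally, for $n\geq 1$ and $d\geq 0$ one has $2(2n+1)^{d+1} \le (2n+1)^{2(d+1)}$, so the bound simplifies to $8 e^{2(d+1)\log(2n+1) - n\epsilon^2/4}$, which is the desired inequality. There is no real obstacle here: once the shatter coefficient of the enlarged class $\mathcal{I}$ is bounded by doubling that of $\mathcal{J}$, everything reduces to the same VC-type argument already carried out for Lemma \ref{lem:ratio}.
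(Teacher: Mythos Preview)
Your proof is correct and follows essentially the same route as the paper: repeat the argument of Lemma~\ref{lem:ratio} for the enlarged class $\mathcal{I}$ after bounding its shatter coefficient. The only minor difference is that the paper directly invokes $\mathcal{S}_{\mathcal{I}}(n)\le (n+1)^{2(d+1)}$ via \cite[Example 4.21]{wainwright2019high}, whereas your complementation argument yields the sharper intermediate bound $\mathcal{S}_{\mathcal{I}}(n)\le 2(n+1)^{d+1}$ before relaxing to the same final estimate.
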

\begin{proof}
The proof follows exactly as that for Lemma~\ref{eq:ratio_general}, noting that $\mathcal{S}_\mathcal{I}(n)\le (n+1)^{2(d+1)}$ by \cite[Example 4.21]{wainwright2019high}.
\end{proof}

\subsection{Proofs for the finite dimensional case}

As a warm up, we start with the setting where $\mathcal{H}$ is $d$-dimensional for some $d\ge1$. We recall from Section \ref{sec:main}, that we identify $(\mathcal{H},\|\cdot\|)$ with the Euclidean space $(\R^d, |\cdot|).$ In particular, the results in this section hold without the assumptions made in Section \ref{sec:rkhs}. They build on \cite[Proof of Theorem 3]{olea2022generalization}. For the convenience of the reader, we restate the results from Section \ref{sec:main} below, before proving them.

\begin{reptheorem}{thm:conc_fin}
Under Assumption \ref{ass:s_moment} we have
\begin{align*}
&\P\left( \overline{\mathcal{W}}_p(\mu_n, \mu)^p \ge c\log(2n+1)^{\frac{p}{s}} \left[\sqrt{\frac{d}{n}} +\left(1+\sqrt{M_{s}(\mu) \vee M_{s}(\mu_n)}\right)\epsilon \right]\right)\le  e^{-\frac{n\epsilon^2}{2}}+ 8e^{\log(2n+1)[2(d+1)-\frac{n\epsilon^2}{64}]},
\end{align*}
for all $\epsilon>0$. The constant $c$ depends only on $p$ and $s$.
\end{reptheorem}

\begin{proof}
By \cite[Proof of Theorem 3]{olea2022generalization}, we have
\begin{align}
\overline{\mathcal{W}}_p(\mu_n, \mu)^p \leq 2^p p \log (2 n+1)^{\frac{p}{s}}\left(I_1+\frac{\sqrt{M_{s}(\mu) \vee M_{s}(\mu_n)}}{s/2 -p} \log (2 n+1)^{-\frac{1}{2}} I_2\right),
\label{eq:42result}
\end{align}
where 
\begin{align*}
I_1 &= \sup _{(\theta, t) \in \R^d \times \mathbb{R}}\left|F_{\theta, n}(t)-F_{\theta}(t)\right|,\quad \text{and} \quad
I_2 = 2\sup _{(\theta, t) \in \R^d \times \mathbb{R}} \frac{\left(F_{\theta} (t)-F_{\theta, n}(t)\right)^{+}}{\sqrt{F_{\theta}(t)\left(1-F_{\theta, n}(t)\right)}} \vee \frac{\left(F_{\theta, n}(t)-F_{\theta}(t)\right)^{+}}{\sqrt{F_{\theta, n}(t)\left(1-F_{\theta}(t)\right)}}.
\end{align*}
By \cite[Proof of Lemma 1]{olea2022generalization}, we have
\begin{align}
\label{eq:I1bound}
\P\left(I_1\ge 180\sqrt{\frac{d+1}{n}} + \epsilon\right)\le e^{-\frac{n\epsilon^2}{2}}.
\end{align}
We now argue that
\begin{align}
\label{eq:details}
\frac{\left(F_{\theta} (t)-F_{\theta, n}(t)\right)^{+}}{\sqrt{F_{\theta}(t)\left(1-F_{\theta, n}(t)\right)}} \le  2 \sup _{f \in \mathcal{I}} \frac{|\mu(f)-\mu_n(f)|}{\sqrt{\mu(f)\vee \mu_n(f)}}.
\end{align}
For this we first consider the case $F_{\theta,n}(t)<1/2$. Then $1-F_{\theta,n}(t)>1/2$; therefore,
\begin{align*}
\frac{\left(F_{\theta} (t)-F_{\theta, n}(t)\right)^{+}}{\sqrt{F_{\theta}(t)\left(1-F_{\theta, n}(t)\right)}} \le  2 \frac{\left(F_{\theta} (t)-F_{\theta, n}(t)\right)^{+}}{\sqrt{F_{\theta}(t)}} \le 2 \sup _{f \in \mathcal{I}} \frac{|\mu(f)-\mu_n(f)|}{\sqrt{\mu(f)\vee \mu_n(f)}}.
\end{align*}
On the other hand, if $F_{\theta,n}(t)\ge 1/2$ we can without loss of generality assume that $F_\theta(t)\ge F_{\theta,n}(t)$ (otherwise the expression is zero). Then we have $F_{\theta}(t)\ge 1/2$, and so 
\begin{align*}
\frac{\left(F_{\theta} (t)-F_{\theta, n}(t)\right)^{+}}{\sqrt{F_{\theta}(t)\left(1-F_{\theta, n}(t)\right)}} \le  2 \frac{\left(F_{\theta} (t)-F_{\theta, n}(t)\right)^{+}}{\sqrt{1-F_{\theta,n}(t)}}.
\end{align*}
Now we note that the above expression is equal to 
\begin{align*}
2 \frac{\left([1-F_{\theta,n} (t)]-(1-F_{\theta}(t)]\right)^{+}}{\sqrt{1-F_{\theta,n}(t)}}.
\end{align*}
Lastly, as $1-F_{\theta,n}(t)=\mu_{n}(\langle x, \theta \rangle > t)$ and $1-F_\theta(t)= \mu (\langle x, \theta\rangle >t)$, we again see that 
\begin{align*}
2 \frac{\left([1-F_{\theta,n} (t)]-(1-F_{\theta}(t)]\right)^{+}}{\sqrt{1-F_{\theta,n}(t)}}\le 2 \sup _{f \in \mathcal{I}} \frac{|\mu(f)-\mu_n(f)|}{\sqrt{\mu(f)\vee \mu_n(f)}},
\end{align*}
by definition of $\mathcal{I}.$ This shows \eqref{eq:details}. Using a symmetric argument for 
$$\frac{\left(F_{\theta, n}(t)-F_{\theta}(t)\right)^{+}}{\sqrt{F_{\theta, n}(t)\left(1-F_{\theta}(t)\right)}}\le  2 \sup _{f \in \mathcal{I}} \frac{|\mu(f)-\mu_n(f)|}{\sqrt{\mu(f)\vee \mu_n(f)}},$$
we find
\begin{align*}
I_2 \le 4 \sup _{f \in \mathcal{I}} \frac{|\mu(f)-\mu_n(f)|}{\sqrt{\mu(f)\vee \mu_n(f)}}.
\end{align*}
By Lemma \ref{lem:add}, we thus conclude
\begin{align}
\label{eq:I2bound}
\P( I_2 \ge \epsilon) \le 8e^{2(d+1)\log(2n+1)-\frac{n\epsilon^2}{64}}.
\end{align}

In conclusion, using \eqref{eq:I1bound} and \eqref{eq:I2bound}, we find
\begin{align*}
&\P\left( \overline{\mathcal{W}}_p(\mu_n, \mu)^p \ge 2^pp\log(2n+1)^{\frac{p}{s}} \left[180\sqrt{\frac{d+1}{n}} +\epsilon+ \frac{\sqrt{M_{s}(\mu) \vee M_{s}(\mu_n)}}{s / 2-p} \epsilon \right]\right)\\
&\stackrel{\eqref{eq:42result}}{\le} \P\left( I_1+\frac{\sqrt{M_{s}(\mu) \vee M_{s}(\mu_n)}}{s/2 -p} \log (2 n+1)^{-\frac{1}{2}} I_2 \ge 180\sqrt{\frac{d+1}{n}} +\epsilon+ \frac{\sqrt{M_{s}(\mu) \vee M_{s}(\mu_n)}}{s / 2-p} \epsilon \right)\\
&\le \P\left( I_1 \ge 180\sqrt{\frac{d+1}{n}} +\epsilon \right) + \P\left(  I_2 \ge \log (2 n+1)^{\frac{1}{2}} \epsilon \right)\\
&\le  e^{-\frac{n\epsilon^2}{2}}+ 8e^{\log(2n+1)[2(d+1)-\frac{n\epsilon^2}{64}]}.
\end{align*}
\end{proof}

Recall that $M_q(\nu)=\int_\mathcal{H} |x|^q \,\nu(dx)$ is the $q$th moment of a measure $\nu\in \mathcal{P}(\mathcal{H})$. Then, by Markov's inequality, we have 
\begin{align}
\P\left(\left|M_{ s}(\mu_n)-M_{ s}(\mu) \right|\ge \tilde{\epsilon}\right)\le \frac{2M_s(\mu)}{\tilde\epsilon}.
\label{eq:moment_bound}
\end{align}
This yields the following corollary.

\begin{repcorollary}{cor:conc_fin}
Under Assumption \ref{ass:s_moment} we have 
\begin{align*}
&\P\left( \overline{\mathcal{W}}_p(\mu_n, \mu)^p \ge c\log(2n+1)^{\frac{p}{ s}} \left[\sqrt{\frac{d}{n}} +\left(1+\sqrt{M_s(\mu)+\tilde\epsilon} \right) \epsilon \right]\right)\\
&\le  9e^{2(d+1)\log(2n+1)-\frac{n\epsilon^2}{64}}+ \frac{2M_s(\mu)}{\tilde \epsilon},
\end{align*}
for all $\epsilon, \tilde\epsilon >0$. The constant $c$ depends only on $p$ and $s$.
\end{repcorollary}

\begin{proof}
This follows from \eqref{eq:moment_bound} and Theorem \ref{thm:conc_fin}, noting that  $$e^{-\frac{n\epsilon^2}{2}}+ 8e^{\log(2n+1)[2(d+1)-\frac{n\epsilon^2}{64}]} \leq  9e^{2(d+1)\log(2n+1)-\frac{n\epsilon^2}{64}}.$$
\end{proof}

We are now in a position to prove Theorem \ref{thm:exp_fin}. 

\begin{reptheorem}{thm:exp_fin}
Under Assumption \ref{ass:s_moment}, we have 
\begin{align*}
\E\left[\overline{\mathcal{W}}_p(\mu_n, \mu)^p\right]\le  C  \log(2n+1)^{\frac{p}{ s}+ \frac{1}{2}}  \sqrt{\frac{d}{n}}.
\end{align*}
The constant $C$ depends only on $p,s$, and $M_s(\mu)$.
\end{reptheorem}

\begin{proof}
Choosing $\epsilon=8v\sqrt{2\log(2n+1)(d+1)/n}$ and $\tilde \epsilon=v^4$ for $v>0$  in Corollary \ref{cor:conc_fin} (and adapting $c$ by a factor of $16\ge 8\sqrt{2(d+1)/d}$)  yields
\begin{align*}
&\P\bigg( \overline{\mathcal{W}}_p(\mu_n, \mu)^p \ge c\log(2n+1)^{\frac{p}{s}+\frac{1}{2}}\sqrt{\frac{d}{n}} \Big[1 +\big(1+\sqrt{M_{ s}(\mu)+v^4}\big) v \Big]\bigg)\\
&\le  9e^{2\log(2n+1)(d+1)(1-v^2)}+ \frac{2M_s(\mu)}{v^4}.
\end{align*}
Clearly, 
for $v\ge 1$, we have $\big(1+\sqrt{M_{ s}(\mu)+1}\big) v^{3} \geq \big(1+\sqrt{M_{ s}(\mu)+v^4}\big) v$ so that
\begin{align*}
&\P\bigg( \overline{\mathcal{W}}_p(\mu_n, \mu)^p \ge c\log(2n+1)^{\frac{p}{s}+\frac{1}{2}}\sqrt{\frac{d}{n}}\Big[ 1+\big(1+\sqrt{M_{ s}(\mu)+1}\big) v^{3}\Big]\bigg)\\
&\le \P\bigg( \overline{\mathcal{W}}_p(\mu_n, \mu)^p \ge c\log(2n+1)^{\frac{p}{s}+\frac{1}{2}}\sqrt{\frac{d}{n}}\Big[1 +\big(1+\sqrt{M_{ s}(\mu)+v^4}\big) v \Big]\bigg),
\end{align*}
and, subsuming the factor $\big(1+\sqrt{M_{ s}(\mu)+1}\big)$ in $c$, 
\begin{align}\label{eq:est}
\begin{split}
&\P\bigg( \overline{\mathcal{W}}_p(\mu_n, \mu)^p -c\log(2n+1)^{\frac{p}{s}+\frac{1}{2}}\sqrt{\frac{d}{n}} \ge  c \log(2n+1)^{\frac{p}{s}+\frac{1}{2}}\sqrt{\frac{d}{n}} v^{3}\bigg)\\
&\le  9e^{2\log(2n+1)(d+1)(1-v^2)}+ \frac{2M_s(\mu)}{v^4}.
\end{split}
\end{align}
Recall that for any integrable non-negative random variable $Z$, we have
\begin{align}\label{eq:int}
\E[Z]=\int_0^\infty \P\big( Z\ge u\big)\,du \leq 1+\int_1^{\infty}\P (Z\ge u)\,du.
\end{align}
Let $\tilde{Y}=Y\vee 0$, where
$$Y=\left( c \log(2n+1)^{\frac{p}{s}+\frac{1}{2}}\sqrt{\frac{d}{n}}\right)^{-1} \left[\overline{\mathcal{W}}_p(\mu_n, \mu)^p -c\log(2n+1)^{\frac{p}{s}+\frac{1}{2}} \sqrt{\frac{d}{n}}\right],$$ 
and notice that for $u>0$, we have $\P(\tilde{Y}\geq u)=\P(Y\geq u)$. Therefore,  we obtain from \eqref{eq:int}, 
\begin{align}
\label{eq:int2}
    \E[Y]\leq \E [\tilde{Y}]=\int_0^{\infty}\P(\tilde{Y}\geq u)\leq 1+ \int_1^{\infty}\P(\tilde Y\geq u) d u=1+ \int_1^{\infty}\P(Y\geq u)d u.
\end{align}
Then, we find
\begin{align*}
\E[\overline{\mathcal{W}}_p(\mu_n, \mu)^p]
&=   c \log(2n+1)^{\frac{p}{s}+\frac{1}{2}}\sqrt{\frac{d}{n}}\E[Y] + c\log(2n+1)^{\frac{p}{s}+\frac{1}{2}} \sqrt{\frac{d}{n}}\\
&\stackrel{\eqref{eq:int2}}{\le}   c \log(2n+1)^{\frac{p}{s}+\frac{1}{2}}\sqrt{\frac{d}{n}}\Big(1+\int_1^{\infty}\P (Y\ge u)\,du\Big) + c\log(2n+1)^{\frac{p}{s}+\frac{1}{2}} \sqrt{\frac{d}{n}}\\
&\stackrel{ \eqref{eq:est}}\le c\log(2n+1)^{\frac{p}{s}+\frac{1}{2}} \sqrt{\frac{d}{n}}\Big(2+ \int_1^\infty \Big[9e^{2\log(2n+1)(d+1)(1-u^{2/3})}+ \frac{2M_s(\mu)}{u^{4/3}}\Big]  \,du\Big)\\
&\le c\log(2n+1)^{\frac{p}{s}+\frac{1}{2}} \sqrt{\frac{d}{n}}\Big(2+ \int_1^\infty \Big[9e^{(1-u^{2/3})}+ \frac{2M_s(\mu)}{u^{4/3}}\Big]  \,du\Big).
\end{align*}
The claim follows as the last integral is finite and only depends on $M_s(\mu)$.
\end{proof}

\subsection{Proofs for the infinite dimensional case}

For any $d\in \N$, let  $\H^d$ to denote the $d$-dimensional subspace of $\H$ spanned by its first $d$ eigenfunctions, i.e.\ $$\H^d:=\text{span}\Big(\Big\{\sqrt{\lambda_j}\psi_j: j=1, \dots, d \Big\} \Big).$$ Then $\H^d$ is a Hilbert space with respect to the scalar product induced by $\H$. We denote by $\Pd: \H \to \H^d$ the canonical projection to $\H^d$, which is clearly Lipschitz continuous. For any $\nu\in \mathcal{P}(\H)$, the pushforward measure of $\nu$ through a function $f:\mathcal{H}\to \mathcal{X}$ for some Hilbert space $\mathcal{X}$ is denoted by $f_{\#}\nu$. More specifically, we define the pushforward measure $\nu^d:={\Pd}_{\#}\mu$, which is a probability measure on $\H^d$.  Furthermore, for any $\theta\in \mathcal{H}$, we denote by $\text{Proj}_\theta:\mathcal{H}\to \R$ the projection $x\mapsto \langle x, \theta\rangle_\mathcal{H},$ and by $\text{Proj}_{\theta \#}\mu$ the corresponding pushforward measure.

Our strategy to prove concentration will be to trade off results for finite-dimensional measures and the tail decay of $\{\lambda_j\}_{j=1}^\infty$. For this, we first note that by the triangle inequality
\begin{align}\label{eq:triangle}
\overline{\mathcal{W}}_p(\mu_n,\mu)^p\le 3^{p-1}\left(\overline{\mathcal{W}}_p( \mu_n,\mu_n^d)^p+ \overline{\mathcal{W}}_p(\mu_n^d,\mu^d)^p+\overline{\mathcal{W}}_p(\mu^d, \mu)^p\right),
\end{align}
where the above is true for any $d > 0$. For the middle term in \eqref{eq:triangle}, we recall that by Theorem \ref{thm:conc_fin}
\begin{align}\label{eq:finite_d}
\begin{split}
&\P\left( \overline{\mathcal{W}}_p(\mu_n^d, \mu^d)^p \ge c\log(2n+1)^{\frac{p}{s}} \left[\sqrt{\frac{d}{n}} +\left(1+\sqrt{M_{s}(\mu^d) \vee M_{s}(\mu_n^d)}\right) \epsilon \right]\right)\le  9e^{2(d+1)\log(2n+1)-\frac{n\epsilon^2}{64}}.
\end{split}
\end{align}
We start with the following observation which we will use to bound the first and third therm in \eqref{eq:triangle}:
\begin{lemma}\label{lem:fintoinf}
For any $\nu \in \mathcal{P}_p(\H)$ we have 
\begin{align*}
\overline{\mathcal{W}}_p(\nu^d,\nu )^p \le \int \|\Pd(x)-x\|_{\H}^p \, \nu(dx).
\end{align*}
\end{lemma}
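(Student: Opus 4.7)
The plan is to construct an explicit coupling on $\mathcal{H}\times\mathcal{H}$ between $\nu^d$ and $\nu$ from which a valid one-dimensional coupling between $\nu^d_\theta$ and $\nu_\theta$ can be extracted for every unit vector $\theta\in\mathcal{H}$.

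Concretely, I would define $\pi:=(\mathrm{Proj}^d,\mathrm{id})_{\#}\nu$ on $\mathcal{H}\times\mathcal{H}$, i.e., the law of $(\mathrm{Proj}^d(X),X)$ where $X\sim\nu$. The first marginal is $\nu^d$ by definition of the pushforward, and the second is $\nu$, so $\pi\in\Pi(\nu^d,\nu)$. For a fixed unit vector $\theta\in\mathcal{H}$, the pushforward of $\pi$ through the map $(x,y)\mapsto(\langle x,\theta\rangle_{\mathcal{H}},\langle y,\theta\rangle_{\mathcal{H}})$ is a coupling in $\Pi(\nu^d_\theta,\nu_\theta)$.

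Using this coupling in the definition of $\mathcal{W}_p$ and then Cauchy--Schwarz,
\begin{align*}
\mathcal{W}_p(\nu^d_\theta,\nu_\theta)^p
&\le \int \bigl|\langle \mathrm{Proj}^d(x),\theta\rangle_{\mathcal{H}}-\langle x,\theta\rangle_{\mathcal{H}}\bigr|^p\,\nu(dx)\\
&= \int \bigl|\langle \mathrm{Proj}^d(x)-x,\theta\rangle_{\mathcal{H}}\bigr|^p\,\nu(dx)
\le \int \|\mathrm{Proj}^d(x)-x\|_{\mathcal{H}}^p\,\nu(dx),
\end{align*}
where the last inequality uses $\|\theta\|_{\mathcal{H}}=1$. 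Taking the supremum over unit vectors $\theta$ yields the claim.

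There is no real obstacle here: the proof is essentially a one-line construction plus Cauchy--Schwarz. The only things to verify carefully are that the pushforward has the stated marginals (immediate from the definition of $\nu^d$) and that the constructed one-dimensional coupling is admissible, both of which are straightforward. The measurability of $\mathrm{Proj}^d$ is clear since it is a bounded linear operator on $\mathcal{H}$.
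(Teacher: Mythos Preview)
Your proof is correct and essentially identical to the paper's: both construct the coupling $(\mathrm{Proj}^d,\mathrm{id})_{\#}\nu$, push it forward to a one-dimensional coupling via $\theta$, and finish with Cauchy--Schwarz. The only cosmetic difference is that the paper first writes the bound as an infimum over all $\pi\in\Pi(\nu^d,\nu)$ before specializing to this particular coupling, whereas you go directly to it.
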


\begin{proof}
We first note that for any $\pi\in \Pi(\nu^d,\nu)$ and any $\theta$ such that $\|\theta\|_{\H}=1$, we have $\Pts \pi \in \Pi(\Pts\nu ^d,{\Pts}\nu)$. Thus, using the Cauchy-Schwarz inequality,
\begin{align*}
\mathcal{W}_p(\Pts\nu^d,\Pts\nu)^p
&\le  \inf_{\pi \in \Pi(\nu^d, \nu)} \int |\langle x, \theta\rangle_{\mathcal{H}} - \langle y, \theta \rangle_{\mathcal{H}}|^p \, \pi(dx,dy) \\
&\le \int |\langle \Pd(x), \theta\rangle_{\mathcal{H}} - \langle x, \theta \rangle_{\mathcal{H}} |^p \, \nu(dx) \le \int \|\Pd(x)-x\|_{\H}^p \, \nu(dx).
\end{align*}
The claim follows as $\overline{\mathcal{W}}_p(\mu,\nu)=\sup_{\|\theta\|_{\H}=1}\mathcal{W}_p (\mu_\theta, \nu_\theta)$, see \eqref{eq:MSW}.
\end{proof}

\begin{lemma}\label{lem:fintoinf_prob}
We have
\begin{align*}
\P\Big(\int \|\Pd(x)-x\|_{\H}^p \, \mu_n(dx)\ge \epsilon \Big) \le \frac{1}{\epsilon^{\frac{s}{p}}} \int \|\Pd(x)-x\|_{\H}^s \, \mu(dx).   
\end{align*}
\end{lemma}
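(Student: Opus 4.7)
The plan is to reduce the statement to Markov's inequality for the $(s/p)$-th moment of the random variable $Z_n := \int \|\Pd(x)-x\|_{\H}^p\,\mu_n(dx) = \frac{1}{n}\sum_{j=1}^n \|\Pd(X_j)-X_j\|_{\H}^p$, and then control $\E[Z_n^{s/p}]$ by the $s$-th moment under $\mu$ via Jensen's inequality.

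More concretely, I would first apply Markov's inequality to the non-negative random variable $Z_n$ raised to the power $s/p$, which is positive since Assumption \ref{ass:s_moment} guarantees $s > 2p > p$. This yields
\begin{align*}
\P\bigl( Z_n \ge \epsilon \bigr) = \P\bigl(Z_n^{s/p} \ge \epsilon^{s/p}\bigr) \le \frac{1}{\epsilon^{s/p}}\,\E\bigl[Z_n^{s/p}\bigr].
\end{align*}

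Next, since $s/p > 1$, the function $t \mapsto t^{s/p}$ is convex on $[0,\infty)$, so Jensen's inequality applied to the uniform average over $j=1,\dots,n$ gives
\begin{align*}
Z_n^{s/p} = \Bigl(\frac{1}{n}\sum_{j=1}^n \|\Pd(X_j)-X_j\|_{\H}^p\Bigr)^{s/p} \le \frac{1}{n}\sum_{j=1}^n \|\Pd(X_j)-X_j\|_{\H}^s.
\end{align*}
Taking expectations and using that $X_1,\dots,X_n$ are i.i.d.\ with law $\mu$ gives $\E[Z_n^{s/p}] \le \int \|\Pd(x)-x\|_\H^s\,\mu(dx)$, and combining with the Markov step produces the claimed bound.

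There is no real obstacle here: the proof is a two-line application of Markov and Jensen. The only thing to verify is the direction of Jensen's inequality (convexity of $t^{s/p}$ under $s/p \ge 1$, which is guaranteed by Assumption \ref{ass:s_moment}), and that one can freely interchange expectation and the finite sum in the last step.
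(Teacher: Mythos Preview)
Your proof is correct and matches the paper's own argument exactly: the paper's proof consists of the single sentence ``This follows from an application of Markov's inequality and Jensen's inequality, noting that $s>1$,'' which is precisely the Markov-then-Jensen strategy you spell out (with the minor clarification that the convexity exponent is $s/p>1$, guaranteed by Assumption~\ref{ass:s_moment}).
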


\begin{proof}
This follows from an application of Markov's inequality and Jensen's inequality, noting that $s>1$.
\end{proof}

\subsection{Proofs for exponential decay}

Recall that Assumption \ref{ass:exponential decay} states the following:  $$\lambda_j\leq C\exp\left(-c j^{\gamma}\right), \quad \text{ for all }j\geq 1, \text{ where }\gamma, c,C>0.$$

\begin{lemma}\label{lem:exp}
Under Assumptions \ref{ass:mu} and \ref{ass:exponential decay}, we have $$\int_{\H} \|\Pd (x)-x\|^q_{\H}\,\mu(dx) \leq C \exp\left(-c qd^\gamma\right),$$
for all $d\geq 1$ and all $q\in [p,s]$. The constant $c$ depends only on $\widetilde{M}_s(\mu)$, while the constant $C$ only depends on $q$ and $\widetilde{M}_s(\mu)$.
\end{lemma}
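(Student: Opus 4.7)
The plan is to translate the $\H$-norm of $x-\Pd x$ into a sum involving the $L^2$-coefficients $\langle x,\psi_j\rangle_{L^2}$, and then exploit the monotonicity and exponential decay of $\{\lambda_j\}_{j\in\N}$ together with Assumption \ref{ass:mu}.

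First, I would verify that $\{\sqrt{\lambda_j}\psi_j\}_{j\in\N}$ is indeed orthonormal in $\H$: using the formula \eqref{eq:inner_prod}, a direct computation gives $\langle \sqrt{\lambda_j}\psi_j,\sqrt{\lambda_k}\psi_k\rangle_\H=\delta_{jk}$, and similarly the expansion coefficients are $\langle x,\sqrt{\lambda_j}\psi_j\rangle_\H=\lambda_j^{-1/2}\langle x,\psi_j\rangle_{L^2}$. Since $\Pd$ is the orthogonal projection onto $\H^d$, Parseval's identity then yields
\begin{equation*}
\|x-\Pd x\|_\H^2 \;=\; \sum_{j=d+1}^{\infty}\frac{\langle x,\psi_j\rangle_{L^2}^{2}}{\lambda_j}.
\end{equation*}

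Second, I would use the non-increasing property of $\{\lambda_j\}_{j\in\N}$: for $j\geq d+1$ we have $\lambda_j\leq \lambda_{d+1}$, hence $\lambda_j^{-1}\leq \lambda_{d+1}\,\lambda_j^{-2}$. This gives the key factorization
\begin{equation*}
\|x-\Pd x\|_\H^{2} \;\leq\; \lambda_{d+1}\sum_{j=d+1}^{\infty}\frac{\langle x,\psi_j\rangle_{L^2}^{2}}{\lambda_j^{2}} \;\leq\; \lambda_{d+1}\sum_{j=1}^{\infty}\frac{\langle x,\psi_j\rangle_{L^2}^{2}}{\lambda_j^{2}},
\end{equation*}
which is precisely why Assumption \ref{ass:mu} is stated with the extra factor of $\lambda_j^{-1}$ relative to the $\H$-norm.

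Third, I would raise both sides to the power $q/2$, integrate with respect to $\mu$, and apply Jensen's inequality to the concave function $t\mapsto t^{q/s}$ (valid since $q/s\in (0,1]$) to obtain
\begin{equation*}
\int_\H \|x-\Pd x\|_\H^{q}\,\mu(dx) \;\leq\; \lambda_{d+1}^{q/2}\,\widetilde M_s(\mu)^{q/s}.
\end{equation*}
Substituting the bound $\lambda_{d+1}\leq C\exp(-c(d+1)^\gamma)\leq C\exp(-c d^\gamma)$ from Assumption \ref{ass:exponential decay} and absorbing $q$- and $s$-dependent prefactors into the generic constant $C$ yields the claimed inequality, with the exponent $cq$ arising from raising $\lambda_{d+1}^{q/2}$ to the explicit decay rate.

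No step is genuinely difficult; the only point requiring care is the bookkeeping of powers of $\lambda_j$ in the $\H$-inner product versus in $\widetilde M_s(\mu)$, which is precisely why the definition of $\widetilde M_s(\mu)$ carries $\lambda_j^{-2}$ rather than $\lambda_j^{-1}$. The argument is tight in the sense that one factor of $\lambda_{d+1}$ is consumed to pass from the $\H$-norm to $\widetilde M_s(\mu)$, and the remaining factor delivers the exponential tail.
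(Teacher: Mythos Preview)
Your proposal is correct and follows essentially the same approach as the paper: express $\|x-\Pd x\|_\H^2$ as the tail sum $\sum_{j\geq d+1}\lambda_j^{-1}\langle x,\psi_j\rangle_{L^2}^2$, pull out a factor of $\lambda_{d+1}^{q/2}$ (the paper uses the slightly looser $\lambda_d^{q/2}$) to convert the weight $\lambda_j^{-1}$ into $\lambda_j^{-2}$, and then bound the remaining integral by a quantity controlled by $\widetilde M_s(\mu)$. The only cosmetic difference is that you invoke Jensen's inequality for $t\mapsto t^{q/s}$ to obtain $\widetilde M_s(\mu)^{q/s}$, whereas the paper uses the elementary bound $a^{q/2}\leq 1+a^{s/2}$ to get $1+\widetilde M_s(\mu)$; both are equivalent after absorbing into the generic constant $C$.
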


\begin{proof}
Using the representation in \eqref{eq:inner_prod}, we observe that for $d\ge 1$
\begin{align*}
\|x-\Pd(x)\|^2_{\H} = \langle x-\Pd(x), x-\Pd(x)\rangle_{\H}
= \sum\limits_{j=1}^{\infty}\frac{1}{\lambda_j} \langle x-\Pd(x),\psi_j \rangle^2_{L^2}  &=  \sum_{j=d+1}^{\infty}\frac{1}{\lambda_j}\langle x,\psi_j\rangle^2_{L^2} .
\end{align*}
Hence,
\begin{align*}
\int \|x-\Pd(x)\|^q_{\H} \,\mu(dx) 
&=\int  \Big(\sum_{j=d+1}^{\infty}\frac{1}{\lambda_j}\langle x,\psi_j\rangle^2_{L^2} \Big)^{\frac{q}{2}} \, \mu(dx) \\
&\leq \sup_{j\geq d+1} \lambda_j^{\frac{q}{2}} \int  \Big( \sum_{j=d+1}^{\infty}\frac{1}{\lambda_j^2}\langle x,\psi_j\rangle^2_{L^2} \Big)^{\frac{q}{2}} \,\mu(dx) \\
&\le \lambda_{d}^{\frac{q}{2}}  \int  \Big( \sum_{j=d+1}^{\infty}\frac{1}{\lambda_j^2}\langle x,\psi_j\rangle^2_{L^2} \Big)^{\frac{q}{2}} \,\mu(dx).
\end{align*}
As the integral is bounded by $1+\widetilde M_s(\mu)$ using the definition of $\widetilde M_s(\mu)$  in \eqref{eq:moments} and because $\lambda_{d}^{\frac{q}{2}}\le C^{\frac{q}{2}} \exp\left(-cqd^\gamma/2\right)$ by assumption, the claim follows.
\end{proof}

We are now ready for the proof of Theorem \ref{thm:conc_exp}.

\begin{reptheorem}{thm:conc_exp}
Under Assumptions \ref{ass:mu} and \ref{ass:exponential decay}, we have  
\begin{align*}
&\P\left( \overline{\mathcal{W}}_p(\mu_n, \mu)^p \ge c\log(2n+1)^{\frac{p}{s}+\frac{1}{2\gamma}} \left[\sqrt{\frac{1}{n}} +\left(2+\sqrt{M_{s}(\mu) \vee M_{s}(\mu_n)}\right) \epsilon \right] \right)\\
&\le  Ce^{c\log(2n+1)^{1+\frac{1}{\gamma}}-\frac{n\epsilon^2}{64}}+ \frac{C}{(n\epsilon^2)^{\frac{s}{2p}}},
\end{align*}
for all $\epsilon>0$. The constants $c,C$ depend only on $p,s,\gamma, \widetilde{M}_p(\mu)$.
\end{reptheorem}

\begin{proof}
Combining Lemmas \ref{lem:fintoinf} and \ref{lem:exp} with $q=p$, we obtain
\begin{align*}
\overline{\mathcal{W}}_p(\mu^d,\mu)^p \leq C \exp(-c pd^\gamma).
\end{align*}
Similarly, using Lemmas \ref{lem:fintoinf}, \ref{lem:fintoinf_prob} and \ref{lem:exp} with $q=s$ yields
\begin{align*}
\P\left(\overline{\mathcal{W}}_p(\mu^d_n,\mu_n)^p\ge \epsilon \right) \le \frac{C}{\epsilon^{\frac{s}{p}}}\exp(-csd^\gamma).
\end{align*}
Together with \eqref{eq:triangle} and \eqref{eq:finite_d}, we thus conclude for any $d > 0$ that
\begin{align*}
&\P\left( \overline{\mathcal{W}}_p(\mu_n, \mu)^p \ge c\log(2n+1)^{\frac{p}{s}} \left[\sqrt{\frac{d}{n}} +\left(2+\sqrt{M_{s}(\mu) \vee M_{s}(\mu_n)}\right) \epsilon \right]+ c \exp(-C pd^\gamma)\right)\\
&\le  Ce^{2(d+1)\log(2n+1)-\frac{n\epsilon^2}{64}} + \frac{C}{\epsilon^{\frac{s}{p}}}\exp(-csd^\gamma).
\end{align*}
Now, choosing $d=(\log n/(2Cp))^{\frac{1}{\gamma}}$ yields
\begin{align*}
&\P\left( \overline{\mathcal{W}}_p(\mu_n, \mu)^p \ge c\log(2n+1)^{\frac{p}{s}+ \frac{1}{2\gamma}} \left[\sqrt{\frac{1}{n}} +\left(2+\sqrt{M_{s}(\mu) \vee M_{s}(\mu_n)}\right) \epsilon \right] \right)\\
&\le  Ce^{c\log(2n+1)^{1+\frac{1}{\gamma}}-\frac{n\epsilon^2}{64}}+ \frac{C}{(n\epsilon^2)^{\frac{s}{2p}}}.
\end{align*}
\end{proof}

\begin{reptheorem}{thm:exp_exp}
Under Assumptions \ref{ass:mu} and \ref{ass:exponential decay} we have 
\begin{align*}
\E\left[\overline{\mathcal{W}}_p(\mu_n, \mu)^p\right]\le   C\frac{\log(2n+1)^{\frac{p}{s}+\frac{1}{2}+\frac{1}{\gamma}}}{\sqrt{n}}.
\end{align*}
The constant $C$ depends only on $p,s, \gamma, M_s(\mu)$.
\end{reptheorem}

\begin{proof}
We follow the same strategy as in the proof of Theorem \ref{thm:exp_fin}. First, we obtain a result similar to Corollary \ref{cor:conc_fin} using \eqref{eq:moment_bound} and Theorem~\ref{thm:conc_exp}:
\begin{align*}
&\P\left( \overline{\mathcal{W}}_p(\mu_n, \mu)^p \ge c\log(2n+1)^{\frac{p}{s}+\frac{1}{2\gamma}} \left[\sqrt{\frac{1}{n}} +\left(2+\sqrt{M_s(\mu)+\tilde \epsilon}\right) \epsilon \right]\right)\\
&\le  Ce^{c\log(2n+1)^{1+\frac{1}{\gamma}}-\frac{n\epsilon^2}{64}}+ \frac{C}{(n\epsilon^2)^{\frac{s}{2p}}} +\frac{2M_s(\mu)}{\tilde\epsilon}.  
\end{align*}
Choosing $\epsilon=8v\sqrt{C\log(2n+1)^{1+\frac{1}{\gamma}}/n}$ and $\tilde\epsilon=v^{\frac{s}{p}}$ we obtain
\begin{equation}
\begin{split}
&\P\left( \overline{\mathcal{W}}_p(\mu_n, \mu)^p \ge c\frac{\log(2n+1)^{\frac{p}{s}+\frac{1}{2}+\frac{1}{\gamma}}}{\sqrt{n}} \left[1+v\left(2+\sqrt{M_s(\mu)+v^{\frac{s}{p}}}\right)  \right]\right)\\
&\le  Ce^{C\log(2n+1)^{1+\frac{1}{\gamma}}(1-v^2)}+ \frac{C}{v^{\frac{s}{p}}}+ \frac{2M_s(\mu)}{v^{\frac{s}{p}}}. 
\label{eq:expec_bound}
\end{split}
\end{equation}
Following the same steps as in the proof of Theorem \ref{thm:exp_fin}, it remains to evaluate this inequality at $u=v^{1+\frac{s}{2p}}$ in order to bound the $\E[\overline{\mathcal{W}}_p(\mu_n, \mu)^p]$. Noting that $$\Big(1+\frac{s}{2p}\Big)^{-1} \frac{s}{p}=\frac{2s}{2p+s}>1,$$
we conclude that $$\int_1^\infty \left[Ce^{c\log(2n+1)^{1+\frac{1}{\gamma}}(1-u^{\frac{4p}{2p+s}})}+ \frac{C}{u^{\frac{2s}{2p+s}}}+ \frac{2M_s(\mu)}{u^{\frac{2s}{2p+s}}} \right]\,du<\infty,$$
is a constant depending on $M_s(\mu)$ only.
The claim follows.
\end{proof}

We now prove Theorem \ref{thm:unif_exp}. For this, recall that Assumption \ref{ass:exp2} states for all $d\geq 1$,
\begin{align}\label{eq:ass_exp2}
\mu\Big(\sum_{j=d+1}^\infty \frac{1}{\lambda_j}  \langle x,\psi_j\rangle_{L^2}^2 >0 \Big)\leq C\exp(-c d^{\gamma}).
\end{align}

\begin{reptheorem}{thm:unif_exp}
Under Assumption \ref{ass:exp2}, we have
\begin{align*}
&\P\left(\sup_{(\mathbf{\theta},t)\in \H \times {\mathbb R}}\frac{|F_{\mathbf{\theta}}(t)-F_{\mathbf{\theta},n}(t)|}{\sqrt{F_\theta(t)\vee F_{\theta,n}(t)}}\ge \epsilon \right)\le Ce^{C\log(2n+1)^{1+\frac{1}{\gamma}}-c n\epsilon^2},
\end{align*}    
for all $\epsilon>0$, where $c>0$ is an absolute constant and $C>0$ depends on $\gamma$.
\end{reptheorem}

\begin{proof}
By Lemma \ref{lem:ratio}, we have 
\begin{align}
\P\left(\sup_{(\mathbf{\theta},t)\in \H \times {\mathbb R}}\frac{|F_{\mathbf{\theta}}(t)-F_{\mathbf{\theta},n}(t)|}{\sqrt{F_\theta(t)\vee F_{\theta,n}(t)}}\ge \epsilon \right) \le 8\E\left[\mathcal{S}_{\mathcal{J}}(X_{1:2n})\right] e^{-c n\epsilon^2}.
\label{eq:lem61bound}
\end{align}
By the tower property, we have for any $d\ge 1$,
\begin{align*}
\E\left[\mathcal{S}_{\mathcal{J}}(X_{1:2n})\right]
&= \sum_{k=0}^{2n} 
\E\left[\mathcal{S}_{\mathcal{J}}(X_{1:2n})\Big| |\{j\in \{1, \dots, 2n\}:\, \Pd(X_j)\neq X_j\}|=k \right]\\
&\qquad\qquad \cdot \P\left(|\{j\in \{1, \dots, 2n\}: \Pd(X_j)\neq X_j\}|=k \right).
\end{align*}
On the set $\{|\{j\in \{1, \dots, 2n\}:\, \Pd(X_j)\neq X_j\}|=k\}$, we can bound $\mathcal{S}_{\mathcal{J}}(X_{1:2n})$ as follows: we have 
\begin{align*}
  \mathcal{S}_{\mathcal{J}}(\{ X_j: \, \Pd(X_j)= X_j, j\in \{1, \dots, n\} \})&=  \mathcal{S}_{\mathcal{J}}(\{ \Pd(X_j): \, \Pd(X_j)= X_j, j\in \{1, \dots, n\} \}) \\
  &\stackrel{\eqref{eq:vcdim}}{\le} (2n-k+1)^{d+1}.
\end{align*}
and trivially
\begin{align*}
 \mathcal{S}_{\mathcal{J}}(\{ X_j: \, \Pd(X_j)\neq X_j, j\in \{1, \dots, n\} \}) \le 2^k.
\end{align*}
Combining these,
\begin{align*}
\mathcal{S}_{\mathcal{J}}(X_{1:2n}) \le 2^{k} (2n-k+1)^{d+1}.
\end{align*}
By \eqref{ass:exp2} with the bound ${n \choose k} \le (\frac{en}{k})^k$,we obtain
\begin{align*}
\P\left(\left|\{j\in \{1, \dots, 2n\}: \Pd(X_j)\neq X_j\} \right|=k \right) &\le
{2n\choose k} \P\left(\Pd(X)\neq X\right)^k\\
&\le  \Big( \frac{e2n}{k}\Big)^k C^k \exp(-ckd^\gamma)\le (Cn\exp(-cd^\gamma))^k.
\end{align*}
Combining these inequalities, we obtain (and enlarging the constant $C$),
\begin{align*}
\E\big[\mathcal{S}_{\mathcal{J}}(X_{1:2n})\big] 
&\le \sum_{k=0}^{2n} 2^{k} (2n-k+1)^{d+1} (Cn\exp(-cd^\gamma))^k\le (2n+1)^{d+1}  \sum_{k=0}^{2n} \left(Cn\exp(-cd^\gamma)\right)^k.
\end{align*}
Choosing $d=(\log(2Cn)/c)^{\frac{1}{\gamma}}$, we have $Cn\exp(-cd^\gamma)=1/2$; thus,
\begin{align*}
\E\big[\mathcal{S}_{\mathcal{J}}(X_{1:2n})\big]  \le (2n+1)^{C(1+\log(n)^{\frac{1}{\gamma}})}.   
\end{align*}
In conclusion, plugging the above bound into \eqref{eq:lem61bound}, we find
\begin{align*}
\P\left(\sup_{(\mathbf{\theta},t)\in \H \times {\mathbb R}}\frac{|F_{\mathbf{\theta}}(t)-F_{\mathbf{\theta},n}(t)|}{\sqrt{F_\theta(t)\vee F_{\theta,n}(t)}}\ge \epsilon \right) \le  Ce^{C\log(2n+1)^{1+\frac{1}{\gamma}}-cn\epsilon^2}.  \end{align*}
\end{proof}

\subsection{Proofs for polynomial decay}

Recall that by Assumption \ref{ass:polyn_decay} have the following:
\begin{align*}
\lambda_j\leq C j^{-\gamma}\qquad \text{ for all }j\geq 1,\text{ where }C>0,\gamma>1.   
\end{align*}

\begin{lemma}\label{lem:pol}
Under Assumption \ref{ass:polyn_decay}, we have $$\int_{\H} \|\Pd (x)-x\|^q_{\H}\,\mu(dx) \leq C d^{-\frac{q\gamma}{2}},$$ for all $d\geq 1$ and all $q\in [1,s]$. The constant $C$ depends on $q$ and $\widetilde{M}_s(\mu)$ only.
\end{lemma}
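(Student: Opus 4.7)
The plan is to mimic almost verbatim the proof of Lemma \ref{lem:exp}, replacing the exponential tail bound on $\lambda_d$ by the polynomial one from Assumption \ref{ass:polyn_decay}.

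First, I would use the representation \eqref{eq:inner_prod} of the inner product in $\H$ together with the fact that $\Pd$ is the canonical projection onto the span of $\{\sqrt{\lambda_j}\psi_j : j \leq d\}$ to obtain
\begin{align*}
\|x - \Pd(x)\|_{\H}^2 = \sum_{j=d+1}^{\infty} \frac{1}{\lambda_j}\langle x,\psi_j\rangle_{L^2}^2,
\end{align*}
exactly as in the proof of Lemma \ref{lem:exp}. Raising both sides to the power $q/2$ and pulling a factor of $\lambda_j$ out of each summand, I would estimate
\begin{align*}
\|x - \Pd(x)\|_{\H}^q \leq \Big(\sup_{j \geq d+1} \lambda_j\Big)^{q/2} \Big(\sum_{j=d+1}^{\infty}\frac{1}{\lambda_j^2}\langle x,\psi_j\rangle_{L^2}^2\Big)^{q/2}.
\end{align*}

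Next, I would integrate against $\mu$ and handle the two factors separately. For the supremum, monotonicity of $(\lambda_j)$ together with Assumption \ref{ass:polyn_decay} gives $\sup_{j\geq d+1}\lambda_j = \lambda_{d+1} \leq C(d+1)^{-\gamma} \leq C d^{-\gamma}$, so that $(\sup_{j\geq d+1}\lambda_j)^{q/2} \leq C d^{-q\gamma/2}$. For the remaining integral, I would use Jensen's inequality in the form $(\int f\, d\mu)^{s/q} \leq \int f^{s/q}\, d\mu$ (valid since $s/q \geq 1$) applied to $f(x)=\bigl(\sum_{j=d+1}^\infty \lambda_j^{-2}\langle x,\psi_j\rangle_{L^2}^2\bigr)^{q/2}$ to bound it by $\widetilde{M}_s(\mu)^{q/s}$, which is a constant depending only on $q$ and $\widetilde{M}_s(\mu)$.

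Combining the two estimates yields $\int \|x - \Pd(x)\|_\H^q\, \mu(dx) \leq C d^{-q\gamma/2}$ with $C$ depending only on $q$ and $\widetilde{M}_s(\mu)$, as claimed. There is no real obstacle here; the only point that requires mild care is the range of $q$, since one must ensure that the Jensen step and the bound by $\widetilde{M}_s(\mu)$ remain valid for all $q \in [1,s]$, but this is immediate because $s/q \geq 1$ in exactly this range.
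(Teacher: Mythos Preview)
Your proposal is correct and follows essentially the same argument as the paper: both reduce to the inequality displayed in the proof of Lemma~\ref{lem:exp} and then invoke $\lambda_d\le C d^{-\gamma}$. The only cosmetic difference is that the paper bounds the remaining integral by $1+\widetilde M_s(\mu)$ (using $t^{q/2}\le 1+t^{s/2}$), whereas you use Jensen to obtain $\widetilde M_s(\mu)^{q/s}$; either estimate gives a constant depending only on $q$ and $\widetilde M_s(\mu)$.
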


\begin{proof}
Recall that in the proof of Lemma \ref{lem:exp}, we showed that for $d\ge 1$,
\begin{align*}
\int \|x-\Pd(x)\|^q_{\H} \,\mu(dx) 
&\le \lambda_{d}^{\frac{q}{2}}  \int  \Big( \sum_{j=d+1}^{\infty}\frac{1}{\lambda_j^2}\langle x,\psi_j\rangle^2 \Big)^{\frac{q}{2}} \,\mu(dx).
\end{align*}
As the integral is bounded by $1+\widetilde M_s(\mu)$ and $\lambda_{d}^{\frac{q}{2}}\le C^{\frac{q}{2}} d^{-\frac{q\gamma}{2}}$ by assumption, the claim follows.
\end{proof}

\begin{reptheorem}{thm:conc_poly}
Under Assumptions \ref{ass:mu} and \ref{ass:polyn_decay}, we have 
\begin{align*}
&\P\left( \overline{\mathcal{W}}_p(\mu_n, \mu)^p \ge c\log(2n+1)^{\frac{p}{s}}n^{\frac{1}{2(1+p\gamma)}} \left[\sqrt{\frac{1}{n}} +\left(2+\sqrt{M_{s}(\mu) \vee M_{s}(\mu_n)}\right) \epsilon \right] \right)\\
&\le  Ce^{4n^{\frac{1}{1+p\gamma}}\log(2n+1)-\frac{n\epsilon^2}{64}}+ \frac{C}{\epsilon^{\frac{s}{p}}}n^{-\frac{s\gamma}{2(1+p\gamma)}},
\end{align*}
for all $\epsilon>0$. The constants $c,C$ depend only on $p,s,\gamma, \widetilde{M}_p(\mu)$.
\end{reptheorem}

\begin{proof}
Combining Lemmas \ref{lem:fintoinf} and \ref{lem:pol} with $q=p$ we obtain
\begin{align*}
\overline{\mathcal{W}}_p(\mu^d,\mu)^p \leq C d^{-\frac{p \gamma} {2}}.
\end{align*}
Similarly, using Lemmas \ref{lem:fintoinf}, \ref{lem:fintoinf_prob}, and \ref{lem:pol} with $q=s$ yield
\begin{align*}
\P\left(\overline{\mathcal{W}}_p(\mu^d_n,\mu_n)^p\ge \epsilon\right) \le \frac{C}{\epsilon^{\frac{s}{p}}}d^{-\frac{s\gamma}{2}}.
\end{align*}
Together with \eqref{eq:triangle} and \eqref{eq:finite_d}, we thus conclude
\begin{align*}
&\P\left( \overline{\mathcal{W}}_p(\mu_n, \mu)^p \ge c\log(2n+1)^{\frac{p}{s}} \left[\sqrt{\frac{d}{n}} +\left(2+\sqrt{M_{s}(\mu) \vee M_{s}(\mu_n)}\right) \epsilon \right]+ C d^{-\frac{p \gamma}{2}} \right) \\
&\le  Ce^{2(d+1)\log(2n+1)-\frac{n\epsilon^2}{64}} + \frac{C}{\epsilon^{\frac{s}{p}}}d^{-\frac{s\gamma}{2}}.
\end{align*}
Finally as claimed, choosing $d=n^{\frac{1}{1+p\gamma}}$ yields
\begin{align*}
&\P\left( \overline{\mathcal{W}}_p(\mu_n, \mu)^p \ge c\log(2n+1)^{\frac{p}{s}}n^{\frac{1}{2(1+p\gamma)}} \left[\sqrt{\frac{1}{n}} +\left(2+\sqrt{M_{s}(\mu) \vee M_{s}(\mu_n)}\right) \epsilon \right] \right)\\
&\le  Ce^{4n^{\frac{1}{1+p\gamma}}\log(2n+1)-\frac{n\epsilon^2}{64}}+ \frac{C}{\epsilon^{\frac{s}{p}}}n^{-\frac{s\gamma}{2(1+p\gamma)}}.
\end{align*}

\end{proof}

\begin{reptheorem}{thm:exp_poly}
Under Assumptions \ref{ass:mu} and \ref{ass:polyn_decay}, we have 
\begin{align*}
\E\left[\overline{\mathcal{W}}_p(\mu_n, \mu)^p\right]\le   C \frac{\log(2n+1)^{\frac{p}{s}+\frac12} }{n^{\frac{1}{2}-\frac{1}{1+p\gamma}}} .
\end{align*}
The constant $C$ depends only on $p,s, \gamma, M_s(\mu)$.
\end{reptheorem}

\begin{proof}
We follow the same strategy as in the proofs of Theorem \ref{thm:exp_fin} and \ref{thm:exp_exp}. First, we obtain 
\begin{align*}
&\P\bigg( \overline{\mathcal{W}}_p(\mu_n, \mu)^p \ge c\log(2n+1)^{\frac{p}{s}}n^{\frac{1}{2(1+p\gamma)}} \Big[\sqrt{\frac{1}{n}} +\big(2+\sqrt{2M_s(\mu)+\tilde\epsilon}) \epsilon \Big]\bigg)\\
&\le   Ce^{4n^{\frac{1}{1+p\gamma}}\log(2n+1)-\frac{n\epsilon^2}{64}}+ \frac{C}{\epsilon^{\frac{s}{p}}}n^{-\frac{s\gamma}{2(1+p\gamma)}}+\frac{2M_s(\mu)}{\tilde\epsilon}.   
\end{align*}
Choosing $\epsilon=16v\sqrt{n^{1/(1+p\gamma)}\log(2n+1)/n}$ and $\tilde\epsilon=v^{\frac{s}{p}}$, we obtain
\begin{equation}
\begin{split}
&\P\bigg( \overline{\mathcal{W}}_p(\mu_n, \mu)^p \ge c\frac{\log(2n+1)^{\frac{p}{s}+\frac12} n^{\frac{1}{1+p\gamma}}}{\sqrt{n}} \Big[1+v(1+\sqrt{2M_s(\mu)+v^{\frac{s}{p}}}) \Big]\bigg)\\
&\le  Ce^{4n^{\frac{1}{1+p\gamma}}\log(2n+1)(1-v^2)}+ \frac{C}{v^{\frac{s}{p}}}+ \frac{2M_s(\mu)}{v^{\frac{s}{p}}}.   
\label{eq:expec_bound2}
\end{split}
\end{equation}
Following the same steps as in the proof of Theorem \ref{thm:exp_fin} and \ref{thm:exp_exp}, it remains to evaluate this inequality at $u=v^{1+\frac{s}{2p}}$ in order to bound the $\E[\overline{\mathcal{W}}_p(\mu_n, \mu)^p]$. Noting that $$\Big(1+\frac{s}{2p}\Big)^{-1} \frac{s}{p}=\frac{2s}{2p+s}>1,$$
we conclude that $$\int_1^\infty \left[Ce^{4n^{\frac{1}{p\gamma}}\log(2n+1)(1-u^{4p/(2p+s)})}+ \frac{C}{u^{2s/(2p+s)}}+ \frac{2M_s(\mu)}{u^{2s/(2p+s)}} \right]\,du<\infty. $$
The claim follows.
\end{proof}

Finally, we prove Theorem \ref{thm:unif_poly}. For this, we  recall Assumption \ref{ass:poly2}, which states that for all $d\geq 1$,
\begin{align}\label{eq:ass_exp2}
\mu\Big(\sum_{j=d+1}^\infty \frac{1}{\lambda_j}  \langle x,\psi_j\rangle_{L^2}^2 >0 \Big)\leq Cd^{-\gamma}. \end{align}

\begin{reptheorem}{thm:unif_poly}
Under Assumption \ref{ass:poly2}, we have
\begin{align*}
&\P\Big(\sup_{(\mathbf{\theta},t)\in \H \times {\mathbb R}}\frac{|F_{\mathbf{\theta}}(t)-F_{\mathbf{\theta},n}(t)|}{\sqrt{F_\theta(t)\vee F_{\theta,n}(t)}}\ge \epsilon \Big)\le e^{C \log(2n+1)n^{\frac{1}{\gamma}} -\frac{c n\epsilon^2}{64}},
\end{align*}    
for all $\epsilon>0$, where $c>0$ is an absolute constant and $C>0$ depends on $\gamma$.
\end{reptheorem}

\begin{proof}
We proceed as in the proof of Theorem \ref{thm:unif_exp}.
In particular, by Lemma \ref{lem:ratio}, we have 
\begin{align*}
\P\left(\sup_{(\mathbf{\theta},t)\in \H \times {\mathbb R}}\frac{|F_{\mathbf{\theta}}(t)-F_{\mathbf{\theta},n}(t)|}{\sqrt{F_\theta(t)\vee F_{\theta,n}(t)}}\ge \epsilon \right) \le \E\Big[\mathcal{S}_{\mathcal{J}}(X_{1:2n})\Big] e^{-\frac{c n\epsilon^2}{64}}.
\end{align*}
By the tower property, we have for any $d\ge 1$,
\begin{align*}
\E\Big[\mathcal{S}_{\mathcal{J}}(X_{1:2n})\Big]
&= \sum_{k=0}^{2n} 
\E\Big[\mathcal{S}_{\mathcal{J}}(X_{1:2n})\Big| |\{j\in \{1, \dots, 2n\}:\, \Pd(X_j)\neq X_j\}|=k \Big]\\
&\qquad\qquad \cdot \P(|\{j\in \{1, \dots, 2n\}: \Pd(X_j)\neq X_j\}|=k ).
\end{align*}
On the set $\{|\{j\in \{1, \dots, 2n\}:\, \Pd(X_j)\neq X_j\}|=k\},$ Lemma \ref{lem:ratio} and the definition of $S_{\mathcal{J}}$ imply
\begin{align*}
\mathcal{S}_{\mathcal{J}}(X_{1:2n})\le 2^{k} (2n-k+1)^{d+1},
\end{align*}
and by \eqref{eq:ass_exp2}
\begin{align*}
\P(|\{j\in \{1, \dots, 2n\}: \Pd(X_j)\neq X_j\}|=k ) \le
{2n\choose k} \P(\Pd(X)\neq X)^k&\le  \Big( \frac{e2n}{k}\Big)^k C^k d^{-\gamma k}\\
&\le (Cnd^{-\gamma})^k.
\end{align*}
Combining these inequalities, we obtain
\begin{align*}
\E\Big[\mathcal{S}_{\mathcal{J}}(X_{1:2n})\Big] 
&\le \sum_{k=0}^{2n} 2^{k} (2n-k+1)^{d+1} (Cnd^{-\gamma})^k \le (2n+1)^{d+1}  \sum_{j=0}^{2n} (Cnd^{-\gamma})^k.
\end{align*}
Choosing $d=(2Cn)^{\frac{1}{\gamma}}$, we have $Cnd^{-\gamma} =1/2$; thus,
\begin{align*}
\E\Big[\mathcal{S}_{\mathcal{J}}(X_{1:2n})\Big] \le (2n+1)^{(2Cn)^{\frac{1}{\gamma}}+1}.   
\end{align*}
In conclusion,
\begin{align*}
\P\Big(\sup_{(\mathbf{\theta},t)\in \H \times {\mathbb R}}\frac{|F_{\mathbf{\theta}}(t)-F_{\mathbf{\theta},n}(t)|}{\sqrt{F_\theta(t)\vee F_{\theta,n}(t)}}\ge \epsilon \Big) \le  e^{C \log(2n+1)n^{\frac{1}{\gamma}} -\frac{cn\epsilon^2}{64}}.    
\end{align*}

\end{proof}

\subsection{Proofs for Section \ref{sec:lit_review}}

\begin{lemma}
Under Assumption \ref{ass:lei} with $\tau_j\le C\exp(-cj^\gamma)$ we have $$\int \|\Pd (x)-x\|^q_{L^2}\,\mu(dx) \leq C \exp(-c qd^\gamma)$$ for all $d\geq 1$ and all $q\in [1,s]$. The constant $c$ depends on $\widetilde{M}_s(\mu)$ only, while the constant $C$ depends on $q$ and $\widetilde{M}_s(\mu)$ only.
\end{lemma}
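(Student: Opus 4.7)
The plan is to mirror the proof of Lemma \ref{lem:exp}, with the RKHS norm replaced by the $L^2$-norm and the eigenvalues $\{\lambda_j\}$ replaced by the sequence $\{\tau_j\}$. Here $\Pd$ denotes the $L^2$-orthogonal projection onto $\mathrm{span}(\psi_1,\ldots,\psi_d)$, so that by Parseval's identity
\[
\|x-\Pd(x)\|^2_{L^2}=\sum_{j=d+1}^{\infty}\langle x,\psi_j\rangle^2_{L^2}.
\]

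First I would introduce the weights $\tau_j^2$ by multiplying and dividing:
\[
\sum_{j=d+1}^{\infty}\langle x,\psi_j\rangle^2_{L^2}=\sum_{j=d+1}^{\infty}\tau_j^2\cdot\frac{\langle x,\psi_j\rangle^2_{L^2}}{\tau_j^2}\le\tau_d^{\,2}\sum_{j=d+1}^{\infty}\frac{\langle x,\psi_j\rangle^2_{L^2}}{\tau_j^2},
\]
where the last inequality uses monotonicity of $\{\tau_j\}$ so that $\sup_{j\ge d+1}\tau_j^2=\tau_{d+1}^2\le\tau_d^{\,2}$. Raising to the power $q/2$ and integrating against $\mu$ yields
\[
\int\|x-\Pd(x)\|^q_{L^2}\,\mu(dx)\le\tau_d^{\,q}\int\Bigl(\sum_{j=d+1}^{\infty}\frac{\langle x,\psi_j\rangle^2_{L^2}}{\tau_j^2}\Bigr)^{q/2}\mu(dx).
\]

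Next I would enlarge the summation from $j\ge d+1$ to $j\ge 1$ (which only increases the integrand) and apply Jensen's inequality: since $q/s\le 1$ on a probability space,
\[
\int\Bigl(\sum_{j=1}^{\infty}\frac{\langle x,\psi_j\rangle^2_{L^2}}{\tau_j^2}\Bigr)^{q/2}\mu(dx)\le\Bigl(\int\Bigl(\sum_{j=1}^{\infty}\frac{\langle x,\psi_j\rangle^2_{L^2}}{\tau_j^2}\Bigr)^{s/2}\mu(dx)\Bigr)^{q/s}\le 1+\widetilde{M}_s(\mu),
\]
where the last step uses Assumption \ref{ass:lei}. Finally, the exponential decay hypothesis $\tau_d\le C\exp(-cd^{\gamma})$ gives $\tau_d^{\,q}\le C^{q}\exp(-cqd^{\gamma})$, which when combined with the previous display absorbs the $q$-dependent factor into the constant $C$ and yields the stated bound.

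There is no real obstacle here: the argument is a direct transcription of Lemma \ref{lem:exp} in which the role of $1/\lambda_j$ in the RKHS expansion is played by $1/\tau_j^2$ in the weighted second moment assumption \ref{ass:lei}, and the role of $\lambda_d^{q/2}$ is played by $\tau_d^{\,q}$. The only minor point requiring attention is ensuring the constant $c$ (in the exponent) is independent of $q$, which is clear because the factor of $q$ appears outside $d^\gamma$; only $C$ needs to grow with $q$.
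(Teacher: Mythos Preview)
Your proposal is correct and follows essentially the same route as the paper's proof: factor out $\tau_d^{\,2}$ from the tail sum, raise to $q/2$, bound the remaining integral by $1+\widetilde M_s(\mu)$, and use $\tau_d^q\le C^q\exp(-cqd^\gamma)$. The only cosmetic difference is that the paper states the bound on the integral directly (using $y^{q/2}\le 1+y^{s/2}$) while you route it through Jensen's inequality; both arrive at the same estimate.
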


\begin{proof}
We observe that for $d\ge 1$,
\begin{align*}
\int \|x-\Pd(x)\|^q_{L^2} \,\mu(dx) 
=\int  \Big(\sum_{j=d+1}^{\infty} \langle x,\psi_j\rangle^2 \Big)^{\frac{q}{2}} \, \mu(dx) &\leq \sup_{j\geq d+1} \tau_j^{q} \int  \Big( \sum_{j=d+1}^{\infty}\frac{1}{\tau_j^2}\langle x,\psi_j\rangle^2 \Big)^{\frac{q}{2}} \,\mu(dx)\\
&\le \tau_{d}^{q}  \int  \Big( \sum_{j=d+1}^{\infty}\frac{1}{\tau_j^2}\langle x,\psi_j\rangle^2 \Big)^{\frac{q}{2}} \,\mu(dx).
\end{align*}
As the integral is bounded by $1+\widetilde M_s(\mu)$ and $\tau_{d}^{q}\le C^{q} \exp(-cqd^\gamma)$ by assumption, the claim follows.
\end{proof}

\begin{lemma}
Under Assumption \ref{ass:lei} with $\tau_j\le Cj^{-\gamma}$, we have $$\int_{\H} \|\Pd (x)-x\|^q_{\H}\,\mu(dx) \leq C d^{-q\gamma},$$ for all $d\geq 1$ and all $q\in [1,s]$. The constant $C$ depends on $q$ and $\widetilde{M}_s(\mu)$ only.
\end{lemma}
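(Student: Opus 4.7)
The plan is to mirror exactly the argument used in the preceding lemma (the exponential decay analogue in the $L^2$ setting), replacing the bound $\tau_d \le C\exp(-cd^{\gamma})$ by $\tau_d \le Cd^{-\gamma}$ at the final step. Note that the norm here should be read as $\|\cdot\|_{L^2}$, consistent with the $L^2(\Omega,m)$ framework of Section \ref{sec:lit_review} and with the companion lemma immediately above.

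First I would expand the integrand using Parseval's identity applied to the orthonormal system $\{\psi_j\}_{j\in\N}$ in $L^2(\Omega,m)$: since $\Pd$ projects onto the span of the first $d$ eigenfunctions,
\begin{align*}
\|x-\Pd(x)\|_{L^2}^q = \Big(\sum_{j=d+1}^{\infty}\langle x,\psi_j\rangle_{L^2}^{2}\Big)^{q/2}.
\end{align*}
Second, I would insert and cancel the weights $\tau_j^2$, using that $(\tau_j)_{j\in\N}$ is non-increasing so that $\tau_j \le \tau_d$ for every $j\ge d+1$. This yields the pointwise bound
\begin{align*}
\Big(\sum_{j=d+1}^{\infty}\langle x,\psi_j\rangle_{L^2}^{2}\Big)^{q/2}
\le \tau_d^{\,q}\,\Big(\sum_{j=d+1}^{\infty}\frac{1}{\tau_j^{2}}\langle x,\psi_j\rangle_{L^2}^{2}\Big)^{q/2}.
\end{align*}

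Third, I would integrate against $\mu$ and control the right-hand side by Assumption \ref{ass:lei}. Since $q\in[1,s]$, Jensen's inequality (or a trivial bound by $1+\widetilde M_s(\mu)$ after splitting the region where the inner sum is $\le 1$ from its complement) gives
\begin{align*}
\int \Big(\sum_{j=d+1}^{\infty}\frac{1}{\tau_j^{2}}\langle x,\psi_j\rangle_{L^2}^{2}\Big)^{q/2}\,\mu(dx) \le 1 + \widetilde M_s(\mu).
\end{align*}
Plugging in the polynomial decay $\tau_d\le Cd^{-\gamma}$ then produces the claimed bound $Cd^{-q\gamma}$, with a constant depending only on $q$ and $\widetilde M_s(\mu)$. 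There is no real obstacle here; the only mildly nontrivial point is the monotonicity step, but that is immediate since Assumption \ref{ass:lei} explicitly requires $(\tau_j)_{j\in\N}$ to be non-increasing.
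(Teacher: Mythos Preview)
Your proposal is correct and follows exactly the approach the paper uses (implicitly, since the paper omits the proof and refers back to the companion lemma): expand via Parseval, factor out $\tau_d^q$ using monotonicity of $(\tau_j)$, bound the remaining integral by $1+\widetilde M_s(\mu)$, and then substitute $\tau_d\le Cd^{-\gamma}$. Your observation that the norm should be read as $\|\cdot\|_{L^2}$ is also on point and matches the proof of the exponential-decay companion lemma.
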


The remainder of the proofs  stays the same, apart from obvious changes in the notation; in consequence, our main results still apply.

\subsection{Proof for Section \ref{sec:numerical}} \label{sec:proof_5}

\begin{lemma}\label{rmk: relations_a_b_c}
In the setting of Proposition \ref{prop: gaussian_kernel_eigenfunctions}, we have the following: 
\begin{align}
a^2+2ab &= c^2, \label{eq:a1} \\
\frac{b(a+c)}{a+b+c} &=c-a,\label{eq:a2} \\
\sqrt{\frac{a+b-c}{a+b+c}}&=\frac{b}{a+b+c}.\label{eq:a3}
\end{align}
\end{lemma}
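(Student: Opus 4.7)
The three identities are all elementary algebraic consequences of the defining relation for $c$, namely $c = \sqrt{a^2 + 2ab}$ in \eqref{eq: def_a_b_c}. The plan is simply to reduce each equation to a polynomial identity that collapses using $c^2 = a^2 + 2ab$.

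For \eqref{eq:a1}, this is immediate: squaring the definition of $c$ gives $c^2 = a^2 + 2ab$.

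For \eqref{eq:a2}, I would clear the denominator and check that $(c-a)(a+b+c) = b(a+c)$. Expanding the left-hand side yields
\[
(c-a)(a+b+c) = c^2 - a^2 + b(c-a) = 2ab + bc - ab = b(a+c),
\]
where the middle step substitutes $c^2 - a^2 = 2ab$ from \eqref{eq:a1}.

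For \eqref{eq:a3}, note first that $a, b > 0$ imply $c > a > 0$ and hence $a+b+c > 0$; also $a+b-c \geq 0$ since $(a+b)^2 - c^2 = b^2 + (a^2 + 2ab - c^2) = b^2 \geq 0$. Thus both sides are well-defined and non-negative, so it suffices to square the identity and verify
\[
(a+b-c)(a+b+c) = b^2.
\]
The left-hand side equals $(a+b)^2 - c^2 = a^2 + 2ab + b^2 - c^2$, which by \eqref{eq:a1} reduces to $b^2$. Taking positive square roots concludes the proof.

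There is no substantive obstacle; the only care needed is the sign check in \eqref{eq:a3} to justify extracting the positive square root, which is handled by the explicit positivity of $a$ and $b$ in \eqref{eq: def_a_b_c}.
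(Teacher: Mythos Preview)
Your proof is correct and follows essentially the same approach as the paper: all three identities are reduced to polynomial relations that follow from $c^2 = a^2 + 2ab$, and your expansions for \eqref{eq:a2} and \eqref{eq:a3} match the paper's computations almost line for line. The only difference is that you explicitly justify the non-negativity needed to take the positive square root in \eqref{eq:a3}, which the paper leaves implicit.
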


\begin{proof} 
\eqref{eq:a1} follows directly from the definition in \eqref{eq: def_a_b_c}. For \eqref{eq:a2}, we calculate
\begin{align*}
(a+b+c)(c-a)&=ac+bc+c^2-a^2-ab-ac \stackrel{\eqref{eq:a1}}{=}ab+bc=b(a+c),
\end{align*}
so rearranging yields the claim. Lastly, for \eqref{eq:a3}, we note that 
\begin{equation*}
\sqrt{\frac{a+b-c}{a+b+c}}=\sqrt{\frac{(a+b)^2-c^2}{(a+b+c)^2}}
\stackrel{\eqref{eq:a1}}{=} \frac{b}{a+b+c}.   
\end{equation*}
\end{proof}

\begin{proof}[Proof of Proposition \ref{prop: gaussian_kernel_eigenfunctions}]

We need to show that for any $j\in \N$ and for $ \lambda_j$ and $\psi_j$ given in \eqref{eq: def_psi_j},
\begin{align}\label{eq:TK}
T_K \psi_j(z') =\int  K(z,z')\psi_j (z)\,m(dz)=\lambda_j \psi_j(z'),
\end{align}
where $ K(z,z') = \exp(-(z-z')^2/2w^2)$ is the Gaussian kernel.
To prove the above, we first define 
\begin{align}\label{eq:psi}
\widetilde{\psi}_j(z):= \psi_j(z) \sqrt{ \sqrt{\frac{a}{c}}  2^j j! }   = \exp\left( -(c-a)z^2\right)H_j( \sqrt{2c}z),
\end{align}
where $a, b$, and $c$ are defined in \eqref{eq: def_a_b_c}. We show that 
\begin{equation}\label{eq:toshow}
T_K \widetilde{\psi}_j(z)=  \sqrt{\frac{2a}{a+b+c}} \left( \frac{b}{a+b+c}\right)^j \widetilde{\psi}_j(z) =\lambda_j \widetilde{\psi}_j(z),
\end{equation}
from which \eqref{eq:TK} follows by linearity of $T_K$.

We also recall the following useful identity, stated in  \cite[Equation 7.374.8]{gradshteyn2014table}:
\begin{equation}\label{eq: hermite_eigen}
\int_{\mathbb R}\exp(-(z-z')^2)H_j(\alpha z)\,d z=\sqrt{\pi}(1-\alpha^2)^{\frac{j}{2}}H_j\left(\frac{\alpha z'}{(1-\alpha^2)^{\frac12}}\right).
\end{equation}

Now, using \eqref{eq:psi} as well as the definitions of $a, b$, and $c$  in \eqref{eq: def_a_b_c}, we find
\begin{align*}
\begin{split}
T_K \widetilde{\psi}_j(z') &= \int \exp\left(-\frac{(z-z')^2}{2w^2}\right)\exp\left( -(c-a)z^2\right)H_j( \sqrt{2c}z) \frac{1}{\sqrt{2\pi}\sigma}\exp\left(-\frac{z^2}{2\sigma^2}\right) \,dz \\
&=\int \exp\left(-b(z-z')^2\right)\exp\left( -(c-a)z^2\right)H_j( \sqrt{2c}z)\sqrt{\frac{2a}{\pi}}\exp\left(-2az^2\right) \,dz.
\end{split}
\end{align*}
Next, we notice that by collecting like terms and completing the square we have
\begin{align*}
\begin{split}
&T_K \widetilde{\psi}_j(z') 
=\sqrt{\frac{2a}{\pi}}\int \exp\left(-(b+c+a)z^2+2bzz'-b(z')^2\right)H_j( \sqrt{2c}z)  \,dz\\
&=\sqrt{\frac{2a}{\pi}}\exp\left(-\left(b-\frac{b^2}{a+b+c}\right)(z')^2\right)  \int\exp\left(-\left(\sqrt{a+b+c}\,z-\frac{bz'}{\sqrt{a+b+c}}\right)^2\right)H_j( \sqrt{2c}z) \, dz.
\end{split}
\end{align*}
Noticing that $\exp\left(-\left(b-\frac{b^2}{a+b+c}\right)(z')^2\right)  = \exp\left(-\frac{b(a +c)}{a+b+c}(z')^2\right) $ and from the change of variables $\tilde{z}=\sqrt{a+b+c} \, z$,
\begin{align*}
\begin{split}
T_K \widetilde{\psi}_j(z') 
&=\sqrt{\frac{2a}{\pi(a+b+c)}}\exp\left(-\frac{b(a+c)(z')^2}{a+b+c}\right)\int \exp\left(-\left(\tilde{z}-\frac{bz'}{\sqrt{a+b+c}}\right)^2\right)H_j\left( \sqrt{\frac{2c}{a+b+c}}\tilde{z}\right) \, d\tilde{z}\\
&=\sqrt{\frac{2a}{\pi(a+b+c)}}\exp\left(-\frac{b(a+c)(z')^2}{a+b+c}\right)\left(\sqrt{\pi}\Big(1-\frac{2c}{a+b+c}\Big)^{\frac{j}{2}}H_j\left(\frac{\sqrt{\frac{2c}{a+b+c}} \frac{b}{\sqrt{a+b+c}} z'}{\Big(1-\frac{2c}{a+b+c}\Big)^{\frac{1}{2}}}\right)\right),
\end{split}
\end{align*}
where the final equality uses \eqref{eq: hermite_eigen}.
Finally, simplifying and using Lemma~\ref{rmk: relations_a_b_c}, we show \eqref{eq:toshow}:
\begin{align*}
\begin{split}
T_K \widetilde{\psi}_j(z') 
&=\sqrt{\frac{2a}{\pi(a+b+c)}}\exp\left(-\frac{b(a+c)(z')^2}{a+b+c}\right)\left(\sqrt{\pi}\left(\frac{a+b-c}{a+b+c}\right)^{\frac{j}{2}}H_j\left(\frac{\frac{\sqrt{2c}b}{a+b+c} z'}{\left(\frac{a+b-c}{a+b+c}\right)^{\frac{1}{2}}}\right)\right)  \\
&=\sqrt{\frac{2a}{a+b+c}} \exp(-(c-a)(z')^2)H_j( \sqrt{2c}z')\left( \frac{b}{a+b+c}\right)^j.
\end{split}
\end{align*}

Next, we show that $\{\psi_j\}_{j\in \N}$ are orthonormal. For this, using \eqref{eq:psi} we compute for $j, k\in\N$,
\begin{equation*}
\begin{split}
\int \tilde{\psi}_j(z)\tilde{\psi}_k(z)\frac{1}{\sqrt{2\pi}\sigma}\exp\left(-\frac{z^2}{2\sigma^2}\right) d z &= \int \exp( -2(c-a)z^2)H_j(\sqrt{2c}z )H_k (\sqrt{2c}z ) \sqrt{\frac{2a}{\pi}}\exp(-2az^2) d z\\
&= \sqrt{\frac{2a}{\pi}}\int_{\mathbb R}\exp( -2cz^2)H_j(\sqrt{2c}z)H_k (\sqrt{2c}z )\,d z.
\end{split}
\end{equation*}
Finally, using a change of variables $\tilde z=\sqrt{2c}z$ and \cite[Eq. 7.374.1]{gradshteyn2014table}, we find
\begin{equation*}
\begin{split}
\int &\tilde{\psi}_j(z)\tilde{\psi}_k(z)\frac{1}{\sqrt{2\pi}\sigma}\exp\left(-\frac{z^2}{2\sigma^2}\right)\, d z =\sqrt{\frac{a}{c\pi}}\int_{\mathbb R}\exp\left( -\tilde z^2\right)H_j(\tilde z)H_k(\tilde z)\,d\tilde z =\left\{\begin{array}{cc}
    0,   &  j\neq k,\\
    \sqrt{\frac{a}{c}}  2^j j!,  & j=k,
\end{array}\right.
\end{split}
\end{equation*}
Therefore $\psi_j=\left[ \sqrt{\frac{a}{c}}  2^j j! \right]^{-\frac{1}{2}} \tilde{\psi}_j$ is orthonormal in $L^2(\mathbb R, m )$. By change of variables, completeness of $\{\psi_j\}_{j\in\N}$ in $L^2(\mathbb R, m)$ follows from the well known fact that the  weighted Hermite polynomials form a complete orthonormal basis of $L^2(\mathbb R)$ \cite{weighted_polynomial}. 
\end{proof}

\begin{proof}[Proof of Lemma \ref{lem: rkhs_kernel_embed_eg}]
Recalling \eqref{eq: def_a_b_c} and the definition of $\kappa$, we note that  $c=\sqrt{a^2+2ab}=a\sqrt{1+2\kappa}$. 
Proposition \ref{prop: gaussian_kernel_eigenfunctions}  states that the eigenvalues of $T_K$ are
\begin{equation}\label{eq: eigenvalue_eg}
\lambda_j= \sqrt{\frac{2a}{a+b+c}} \left( \frac{b}{a+b+c}\right)^j= \sqrt{\frac{2}{1+\kappa+\sqrt{1+2\kappa}}} \left( \frac{\kappa}{1+\kappa+\sqrt{1+2\kappa}}\right)^j.
\end{equation}
As $\kappa\geq 4$, it can be checked that
\begin{equation*}
\frac{1}{2}\le \frac{\kappa}{1+\kappa+\sqrt{1+2\kappa}}<1,
\end{equation*}
which implies that
\begin{equation}
\label{eq:lambda_bound}
\sqrt{\frac{2}{1+\kappa+\sqrt{1+2\kappa}}}  \left(\frac{1}{2}\right)^j \le \lambda_j < \sqrt{\frac{2}{1+\kappa+\sqrt{1+2\kappa}}} \leq \frac{1}{2}.
\end{equation}
In particular, Assumption \ref{ass:exponential decay}, namely, $\lambda_j \leq C \exp(-c j^{\gamma})$ is satisfied for $\gamma=1$ using \eqref{eq:lambda_bound}. 

Now we show (2), i.e.\ $\widetilde{M}_s(\mu)<\infty$. First, recalling that $\tilde \mu=\mathcal{N}(0,\eta^2)$ and $\mu=\phi_{\#}\tilde \mu$, we have
\begin{align*}
\widetilde{M}_s(\mu) =\int \Big(\sum_{j=1}^{\infty}\frac{1}{\lambda_j^2}\langle x, \psi_j\rangle^2_{L^2} \Big)^{\frac{s}{2}} \mu(dx)=\int  \Big(\sum\limits_{j=1}^{\infty}\frac{|\psi_j(z)|^2}{\lambda_j} \Big)^{\frac{s}{2}} \tilde \mu(dz) &=\int \Big(\sum\limits_{j=1}^{\infty}\lambda_j\frac{|\psi_j(z)|^2}{\lambda^2_j}\Big)^{\frac{s}{2}} \tilde \mu(dz)\\
&\overset{\text{Jensen}}{\leq} C\int \Big(\sum\limits_{j=1}^{\infty}\lambda_j\frac{|\psi_j(z)|^s}{\lambda^s_j}\Big) \tilde \mu(dz).
\end{align*}
Next, using Fubini Theorem, we find
\begin{align*}
\widetilde{M}_s(\mu) \leq C\int \Big(\sum\limits_{j=1}^{\infty}\lambda_j\frac{|\psi_j(z)|^s}{\lambda^s_j}\Big)\tilde \mu(dz)  & =C\int \Big(\sum\limits_{j=1}^{\infty}\frac{|\psi_j(z)|^s}{\lambda_j^{\frac{s}{2}}} \Big(\frac{1}{\lambda_j}\Big)^{\frac{s}{2}-1}\Big)\tilde \mu(dz)\\
&\overset{\text{Fubini}}{=}C\sum\limits_{j=1}^{\infty}\int \frac{|\psi_j(z)|^s}{\lambda_j^{s/2}} \Big(\frac{1}{\lambda_j}\Big)^{\frac{s}{2}-1}\tilde \mu(dz).
\end{align*}

Notice that by \eqref{eq:lambda_bound}, for a constant $C>0$ depending on $a,b$, we have
\begin{align}\label{eq: kappa_eg}
\frac{1}{\sqrt{\lambda_j}}\le C 2^{j/2};
\end{align}
therefore,
\begin{align*}
&\widetilde{M}_s(\mu) \leq C\sum\limits_{j=1}^{\infty}\int \Big(\frac{|\psi_j(z)|^s}{\lambda_j^{s/2}}\Big) \Big(\frac{1}{\lambda_j}\Big)^{\frac{s}{2}-1}\,\tilde \mu(dz)\\
&\overset{\eqref{eq: def_psi_j},\eqref{eq: kappa_eg}}{\leq}  C\sum\limits_{j=1}^{\infty}\int \Big( \frac{1}{   j! }\Big)^{\frac{s}{2}}\Big(\frac{1}{\lambda_j}\Big)^{\frac{s}{2}-1}\Big(\exp(sa z^2)  \|H_j(\sqrt{2c z})\exp(-c z^2)\|^s_{L^{\infty}(\mathbb R)}\Big) \tilde \mu(dz)\\
&\overset{\eqref{eq: pointwise_eigenfunction_upperbound}}{\leq} C\sum\limits_{j=1}^{\infty}\int \Big( \frac{1}{ j! }\Big)^{\frac{s}{2}}\Big(\frac{1}{\lambda_j}\Big)^{\frac{s}{2}-1}\Big(\exp(sa z^2)  j^{-\frac{s}{12}}\Big)\frac{1}{\sqrt{2\pi}\eta}\exp\Big(-\frac{z^2}{2\eta^2}\Big) dz,
\end{align*}
where the final step uses that by \cite[Theorem 1]{BONAN1990210}, for each $j\in \N$, we have
\begin{equation}\label{eq: pointwise_eigenfunction_upperbound}
\max\limits_{x\in\mathbb R}\left|H_j(\sqrt{2c}x)\right|\exp(-cx^2)\leq D j^{-\frac{1}{12}},
\end{equation}
where $D$ is a positive constant.  Finally, by assumption, we have
\begin{equation}\label{eq: sa-1/2}
sa-\frac{1}{2\eta^2}=a\Big(s- \frac{2\sigma^2}{\eta^2}\Big)<0.
\end{equation}
Therefore, 
\begin{align*}
&\widetilde{M}_s(\mu)
\leq C\sum\limits_{j=1}^{\infty}\int \Big( \frac{1}{ j! }\Big)^{\frac{s}{2}}\Big(\frac{1}{\lambda_j}\Big)^{\frac{s}{2}-1}\left(\exp(sa z^2)  j^{-\frac{s}{12}}\right)\frac{1}{\sqrt{2\pi}\eta}\exp\Big(-\frac{z^2}{2\eta^2}\Big) dz\\
&\leq C\Big[\int\exp\Big(\Big(sa-\frac1{2\eta^2}\Big) z^2\Big)\frac{1}{\sqrt{2\pi}\eta}dz\Big]\sum\limits_{j=1}^{\infty}\Big( \frac{1}{ j! }\Big)^{\frac{s}{2}}\Big(\frac{1}{\lambda_j}\Big)^{\frac{s}{2}-1} j^{-\frac{s}{12}} \overset{\eqref{eq: sa-1/2}}{\leq}C\sum\limits_{j=1}^{\infty}\Big( \frac{1}{ j! }\Big)^{\frac{s}{2}}\Big(\frac{1}{\lambda_j}\Big)^{\frac{s}{2}-1} j^{-\frac{s}{12}}.
\end{align*}
The proof is completed by noting that
\begin{align*}
\widetilde{M}_s(\mu)
\leq C\sum\limits_{j=1}^{\infty}\Big( \frac{1}{  j! }\Big)^{\frac{s}{2}}\Big(\frac{1}{\lambda_j}\Big)^{\frac{s}{2}-1} j^{-\frac{s}{12}} &\leq C\sum\limits_{j=1}^{\infty}\Big( \frac{1}{ j! }\Big)^{\frac{s}{2}}(2^j)^{\frac{s}{2}-1}  j^{-\frac{s}{12}}=C\sum\limits_{j=1}^{\infty}\Big( \sqrt{\frac{2^j}{j!}}\Big)^{s}2^{-j}  j^{-\frac{s}{12}}<\infty,
\end{align*}
where in the last inequality, convergence of sum is guaranteed by Stirling's approximation. The constant $C$ only depends on $a,b$ and $s$.
\end{proof}

\section{Some notes of the topology of generated by $ \overline{\mathcal{W}}_p$} \label{sec:topology}

In this section we give some details on the topology induced by the MSW distance.
First, recall the definition of weak convergence. 

\begin{definition}
We say that a sequence of probability measures $(\mu^n)_{n\in \N}$ on $\mathcal{H}$ converges weakly to a probability measure $\mu$ if for every bounded continuous $f:\mathcal{H}\to \R$,
\[
\lim\limits_{n\to\infty}\int_{X}f(x)\,\mu^n(d x)=\int_{X}f(x)\,\mu(dx).
\]

\end{definition}

We start with an easy lemma:
\begin{lemma}\label{est: SW<W}
If $\mu,\nu$ have finite $p$th moment, then $\overline{\mathcal{W}}_{p}(\mu,\nu)\leq \mathcal{W}_p(\mu,\nu)$.
\end{lemma}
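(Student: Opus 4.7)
My plan is to exploit the fact that each projection map $\mathrm{Proj}_\theta(x) := \langle x,\theta\rangle_{\mathcal H}$ with $\|\theta\|_{\mathcal H}=1$ is $1$-Lipschitz by the Cauchy--Schwarz inequality. In particular, for any coupling $\pi \in \Pi(\mu,\nu)$, the pushforward $(\mathrm{Proj}_\theta, \mathrm{Proj}_\theta)_\#\pi$ is a coupling in $\Pi(\mu_\theta,\nu_\theta)$, since marginals are preserved under pushforward by $\mathrm{Proj}_\theta$.

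With this in mind, the proof is a direct comparison of cost functionals. For a fixed unit vector $\theta\in\mathcal H$ and any $\pi\in\Pi(\mu,\nu)$, I would write
\begin{align*}
\mathcal W_p(\mu_\theta,\nu_\theta)^p
&\le \int |u-v|^p\,(\mathrm{Proj}_\theta,\mathrm{Proj}_\theta)_\#\pi(du,dv) \\
&= \int |\langle x-y,\theta\rangle_{\mathcal H}|^p\,\pi(dx,dy)
\le \int \|x-y\|_{\mathcal H}^p\,\pi(dx,dy),
\end{align*}
where the first inequality uses that the pushforward coupling is admissible for $\mathcal W_p(\mu_\theta,\nu_\theta)$ and the last inequality is Cauchy--Schwarz applied pointwise. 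Taking the infimum over $\pi\in\Pi(\mu,\nu)$ yields $\mathcal W_p(\mu_\theta,\nu_\theta)^p \le \mathcal W_p(\mu,\nu)^p$, and then taking the supremum over unit $\theta$ and extracting $p$-th roots gives $\overline{\mathcal W}_p(\mu,\nu)\le \mathcal W_p(\mu,\nu)$.

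There is essentially no obstacle: the argument is a standard one-line application of the Kantorovich formulation plus Lipschitzness of projections, and the finite $p$-th moment assumption on $\mu,\nu$ ensures that all Wasserstein distances appearing are finite so the inequalities are genuinely meaningful rather than vacuously $\infty\le\infty$. The only small point of care is verifying the marginal property of the pushforward, which follows because $(\mathrm{Proj}_\theta)_\#\mu = \mu_\theta$ and $(\mathrm{Proj}_\theta)_\#\nu = \nu_\theta$ by definition of $\mu_\theta,\nu_\theta$.
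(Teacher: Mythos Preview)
Your proof is correct and follows essentially the same approach as the paper: push forward a coupling of $(\mu,\nu)$ through $(\mathrm{Proj}_\theta,\mathrm{Proj}_\theta)$ to obtain an admissible coupling for $(\mu_\theta,\nu_\theta)$, then apply Cauchy--Schwarz pointwise. The only cosmetic difference is that the paper fixes an \emph{optimal} coupling $\pi$ for $\mathcal W_p(\mu,\nu)$ at the outset (citing its existence), whereas you work with an arbitrary $\pi$ and take the infimum afterward; your variant avoids invoking existence of optimizers but is otherwise identical.
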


\begin{proof}
 By e.g.\ \cite[Theorem 1.7]{santambrogio2015optimal}, there exists an optimal transport plan $\pi\in \Pi(\mu,\nu)$ for $W_p(\mu,\nu)$. Then $(\text{Proj}_{\theta}\times \text{Proj}_{\theta})_{\#} \pi\in \Pi(\mu_{\theta},\nu_{\theta})$. Then, by Cauchy-Schwarz with $\|\theta\|_{\mathcal{H}}=1$, we have
\[
\mathcal{W}_p(\mu_\theta,\nu_\theta)^p\leq \int |\langle \theta,x\rangle-\langle \theta,y\rangle|^p\,\pi(dx,dy)\leq \int \|x-y\|_{\H}^p\,\pi(dx,dy)=\mathcal{W}_p(\mu,\nu)^p.
\]
Taking the supremum over $\|\theta\|_\mathcal{H}=1$ concludes the proof. 
\end{proof}

The following lemma states the equivalence between weak convergence and convergence with respec to $\overline{\mathcal{W}}_p$.  

\begin{lemma}\label{sw: weak topo_w}
Let $\mu^n,\mu$ have finite $p$th moment. 
\begin{enumerate}
    \item If  $\lim_{n\to\infty} \overline{\mathcal{W}}_p(\mu^n,\mu)=0$, then $(\mu^n)_{n\in \N}$ converges weakly to $\mu$. 
    \item If $(\mu^n)_{n\in \N}$ converges weakly to $\mu$ and $\lim_{n\to\infty}\int \|x\|_{\H}^p\,\mu^n(d x)=\int_{\H}\|x\|_{\H}^p\,\mu(dx)$, then $$\lim_{n\to \infty}\overline{\mathcal{W}}_p(\mu^n,\mu)= 0.$$
\end{enumerate}
\end{lemma}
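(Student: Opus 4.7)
The plan is to handle the two parts separately, relying on Lemma~\ref{est: SW<W} in both cases.

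For part (2): The hypotheses are weak convergence of $(\mu^n)$ to $\mu$ together with convergence of the $p$-th moment of $\|\cdot\|_{\mathcal{H}}$. By the classical equivalence on Polish spaces between $\mathcal{W}_p$-convergence and the conjunction of weak convergence plus convergence of the $p$-th moment of the distance to a base point (see \cite[Theorem 6.9]{villani2009optimal}, which transfers directly to separable Hilbert spaces with the base point taken to be $0\in\mathcal{H}$), one obtains $\mathcal{W}_p(\mu^n,\mu)\to 0$. Applying Lemma~\ref{est: SW<W} then yields $\overline{\mathcal{W}}_p(\mu^n,\mu)\leq \mathcal{W}_p(\mu^n,\mu)\to 0$.

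For part (1), the plan is to establish weak convergence via the combination of (a) pointwise convergence of characteristic functions and (b) tightness of $(\mu^n)$, and to conclude using a Lévy-type continuity argument on $\mathcal{H}$. For (a), since $\mathcal{W}_p(\mu^n_\theta,\mu_\theta)\leq \overline{\mathcal{W}}_p(\mu^n,\mu)\to 0$ for every unit $\theta\in\mathcal{H}$, classical one-dimensional theory gives $\mu^n_\theta\to\mu_\theta$ weakly on $\mathbb{R}$, and hence $\widehat{\mu^n}(\theta)\to\widehat{\mu}(\theta)$ pointwise on the unit sphere; the extension to all $\theta\in\mathcal{H}$ is by scaling.

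For (b), the main obstacle, exploit the reverse triangle inequality on the real line, namely
\[
\Bigl|\Bigl(\int |t|^p\,d\mu^n_\theta\Bigr)^{1/p}-\Bigl(\int |t|^p\,d\mu_\theta\Bigr)^{1/p}\Bigr|\leq \mathcal{W}_p(\mu^n_\theta,\mu_\theta)\leq \overline{\mathcal{W}}_p(\mu^n,\mu),
\]
which yields uniform-in-$\theta$ convergence of the projected $p$-th moments. Testing against a fixed orthonormal basis $(e_j)_{j\in\mathbb{N}}$ of $\mathcal{H}$ and combining with the tail decay $\sum_{j>N}\mathbb{E}_\mu[\langle x,e_j\rangle^2]\to 0$ inherited from the finite $p$-th moment of $\mu$, one then aims to verify the standard criterion for tightness on a separable Hilbert space (uniform smallness of $\mu^n(\sum_{j>N}\langle x,e_j\rangle^2>\varepsilon)$).

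Once (a) and (b) are in place, Prokhorov's theorem guarantees that any subsequence of $(\mu^n)$ admits a weakly convergent sub-subsequence with limit $\nu$; Fourier uniqueness combined with $\widehat{\nu}=\widehat{\mu}$ forces $\nu=\mu$, so the full sequence converges weakly to $\mu$. I expect the tightness step to be the principal technical difficulty, since $\overline{\mathcal{W}}_p$ only controls directional moments, and converting uniform control in the direction variable into tail control along a basis on $\mathcal{H}$ is the delicate part of the argument.
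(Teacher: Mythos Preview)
Your treatment of part~(2) is correct and coincides with the paper's proof.

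For part~(1), the paper takes a shorter route than you do: from $\mathcal{W}_p(\mu^n_\theta,\mu_\theta)\to 0$ for each fixed unit $\theta$ it deduces pointwise convergence of the quantities $\int\exp(a\langle\gamma,x\rangle)\,d\mu^n$ and then invokes \cite[Proposition~4.6.9]{bogachev_weak_convergence_measure} directly, with no separate tightness argument. You instead try to supply tightness explicitly, and you are right to flag this as the crux---but the tightness step cannot be completed in the stated generality, because part~(1) appears to fail when $\dim\mathcal{H}=\infty$. Fix an orthonormal basis $(e_j)$ of $\mathcal{H}$ and $c>0$, and set $\mu^n=\tfrac{1}{n}\sum_{j=1}^n\delta_{ce_j}$, $\mu=\delta_0$. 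For any unit $\theta$ with $\theta_j=\langle e_j,\theta\rangle$ one has
\[
\mathcal{W}_p(\mu^n_\theta,\delta_0)^p=\frac{c^p}{n}\sum_{j=1}^n|\theta_j|^p,
\]
and since $\sum_{j\le n}|\theta_j|^p\le 1$ for $p\ge 2$ (each $|\theta_j|\le 1$) while $\sum_{j\le n}|\theta_j|^p\le n^{1-p/2}$ for $p\in[1,2)$ by H\"older, it follows that $\overline{\mathcal{W}}_p(\mu^n,\delta_0)\to 0$ for every $p\ge 1$. Yet $(\mu^n)$ is not tight (any compact $K\subset\mathcal{H}$ contains only finitely many of the points $ce_j$), and testing against the bounded continuous function $f(x)=\|x\|^2/(1+\|x\|^2)$ gives $\int f\,d\mu^n=c^2/(1+c^2)\not\to 0=f(0)$, so $\mu^n\not\to\delta_0$ weakly. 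Hence your reverse-triangle-inequality plan cannot produce tightness: uniform-in-$\theta$ control of directional $p$-th moments does not force smallness of $\sum_{j>N}\langle x,e_j\rangle^2$ under $\mu^n$. The paper's argument shares the same gap, since it only uses the pointwise-in-$\theta$ consequence $\mu^n_\theta\to\mu_\theta$, which in infinite dimensions is strictly weaker than weak convergence of $\mu^n$; the cited result cannot yield the stated conclusion without an additional hypothesis such as tightness.
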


Interestingly, \cite[Example 3.7]{han2023sliced} shows that $\lim_{n\to \infty} \overline{\mathcal{W}}_p(\mu^n,\mu)=0$ does \emph{not} imply $$\lim_{n\to \infty} \int \|x\|_\mathcal{H}^p\,\mu^n(dx) =\int \|x\|_\mathcal{H}^p\,\mu(dx),$$ in general, if $\mathcal{H}$ is infinite dimensional.

\begin{proof}
For (1), we note that $\lim_{n\to\infty}\overline{\mathcal{W}}_p(\mu^n,\mu)=0$ implies that $\lim_{n\to \infty} \mathcal{W}_p(\mu^{n}_\theta, \mu_\theta)=0$ for every $\theta$ suh that $\|\theta\|_{\H}=1$. Thus, for every $\|\theta\|_{\H}=1$, $(\mu^n_\theta)_{n\in \N}$ converges weakly to $\mu_\theta$. Now for every $\gamma \in \H$ and every $a\in \R$,
\begin{equation*}
\begin{split}
\lim_{n\to\infty}\int \exp(a\langle \gamma,x \rangle) \,\mu^{n}(dx)=\lim_{n\to\infty}\int \exp\left(a\|\gamma\|\langle \frac{\gamma}{\|\gamma\|},x \rangle\right) \,\mu^{n}(dx)&=\lim_{n\to\infty}\int \exp(a\|\gamma\|x)\,\mu^{n}_{\frac{\gamma}{\|\gamma\|}}(d x)\\
&=\int \exp(a\|\gamma \|x)\,\mu_{\frac{\gamma}{\|\gamma\|}}(dx)\\
&=\int \exp(a\langle \gamma,x \rangle)\, \mu(d x).
\end{split}
\end{equation*}
This implies that $(\mu^n)_{n\in \N}$ converges weakly to $\mu$ by \cite[ Proposition 4.6.9]{bogachev_weak_convergence_measure}, as claimed.

For (2), we note that $\lim_{n\to \infty} \mathcal{W}_p(\mu^n,\mu)= 0$ by \cite[Definition 6.8 and Theorem 6.9]{villani2009optimal}. Thus, using Lemma \ref{est: SW<W}, $\lim_{n\to \infty} \overline{\mathcal{W}}_p(\mu^n,\mu)= 0$.
\end{proof}

\section{Conclusion}
In this work, we show that the max-sliced Wasserstein distance does not suffer from the curse of dimensionality under quite general conditions when considering its application to probability measures in Hilbert space. There are many open questions that remain. It is natural to ask whether the rates found in this work could be improved with stronger assumptions on the measure.  Furthermore, rates of concentration in Hilbert space for other variants of Wasserstein metrics, like the Gromov-Wasserstein metric, have not been studied to our knowledge.

\bibliographystyle{siam}
\bibliography{bib}

\end{document}